\newtheorem{theorem}{Theorem}[section]
\newtheorem{lemma}[theorem]{Lemma}
\newtheorem{corollary}[theorem]{Corollary}
\newtheorem{proposition}[theorem]{Proposition}
\newtheorem*{notation*}{Notation}
\newtheorem*{p*}{Proposition~\ref{h.s.o.p}}
\theoremstyle{definition}
\newtheorem{definition}[theorem]{Definition}
\newtheorem{example}[theorem]{Example}
\newtheorem{remark}[theorem]{Remark}
\newtheorem{thm}[theorem]{Theorem}
\newtheorem{lem}[theorem]{Lemma}
\newtheorem{prop}[theorem]{Proposition}
\newcommand{\Mat}{\operatorname{Mat}}
\newcommand{\MM}{\mathscr{M}}
\newcommand{\Tr}{\operatorname{Tr}}
\newcommand{\M}{{\operatorname{Mat}}}
\newcommand{\C}{{\mathbb C}}
\newcommand{\N}{{\mathbb Z}_{\geq 0}}
\newcommand{\Z}{{\mathbb Z}}
\newcommand{\K}{K}
\newcommand{\sgn}{\operatorname{sgn}}
\newcommand{\trace}{\operatorname{tr}}
\newcommand{\Hom}{\operatorname{Hom}}
\newcommand{\GL}{\operatorname{GL}}
\newcommand{\V}{\mathcal{V}}
\newcommand{\ZZ}{\mathcal{Z}}
\newcommand{\GLn}{{\mathscr A}(n)}
\newcommand{\BBn}{{\mathscr W}(n)}
\newcommand{\id} {\operatorname{id}}
\newcommand{\ten} {10}
\newcommand{\eleven} {11}
\newcommand{\PR} {\mathcal{P}}
\newcommand{\W} {\mathcal{W}}
\newcommand{\End}{\operatorname{End}}
\newcommand{\type}{\operatorname{\bf type}}
\newcommand{\ad}{\operatorname{ad}}
\newcommand{\GG}{\mathscr{G}}
\newcommand{\Fund}{\operatorname{Alt}}
\newcommand{\VV}{\mathcal{V}}
\newcommand{\Alt}{\operatorname{Alt}}
\tikzstyle{block}=[rectangle, draw, text centered, rounded corners, minimum height=.6cm, minimum width=.6cm]
\tikzstyle{cage}=[rectangle, draw, text centered, rounded corners, dotted]
\title{Invariant Theory and wheeled PROPs}
\author{Harm Derksen and Visu Makam}
\thanks{The first author was supported by NSF grants DMS-1601229. The second author was supported by NSF grants DMS-1601229, DMS-1638352 and CCF-1412958.}
\begin{document}

\begin{abstract}
We study the category of wheeled PROPs using tools from Invariant Theory.
A typical example of a wheeled PROP is the mixed tensor algebra ${\mathcal V}=T(V)\otimes T(V^\star)$, where $T(V)$ is the tensor algebra on
 an $n$-dimensional vector space over a field of $K$ of characteristic~0. 
  First we classify all the ideals of the initial object $\ZZ$ in the category of wheeled PROPs. 
 We show that non-degenerate sub-wheeled PROPs of ${\mathcal V}$ are exactly subalgebras of the form ${\mathcal V}^G$ where $G$ is a closed, reductive subgroup of the general linear group $\GL(V)$. 
When $V$ is a finite dimensional Hilbert space,  a similar description of invariant tensors for an action of  a compact group was given by Schrijver. We also generalize the theorem of  Procesi that says that trace rings satisfying the $n$-th Cayley-Hamilton identity can be embedded in an $n \times n$ matrix ring over a commutative algebra $R$. Namely, we prove that a wheeled PROP can be embedded in $R\otimes {\mathcal V}$
for a commutative $K$-algebra $R$ if and only if it satisfies certain relations.
\end{abstract}

\maketitle

\section{Introduction}
 PROPs were introduced by Adams and MacLane (see~\cite{MacLane65}) in the context of Category Theory and formalize functors that may have several inputs and outputs.
  The abbreviation PROP stands
for {\em PROduct and Permutation category}.
Wheeled PROPs were introduced by Markl, Merkulov and Shadrin \cite{MMS09}. Besides the (tensor) product and permutations, 
wheeled PROPs also have contractions. From the viewpoint of Classical Invariant Theory, wheeled PROPs have exactly the right structure. 
Throughout this paper we will assume that $K$ is a fixed field of characteristic 0.
We will give the precise definition of a wheeled PROP in the next section, but for now we will give some important examples.

Suppose that $V$ is an $n$-dimensional $K$-vector space.
The $q$-fold tensor product is defined by 
$$
V^{\otimes q}:=\underbrace{V\otimes V\otimes \cdots\otimes V}_{q}.
$$
 By convention, $V^{\otimes 0}=K$.
Let $V^\star$ be the dual space and define ${\mathcal V}^p_q=(V^\star)^{\otimes p}\otimes V^{\otimes q}$.  There are actions of the symmetric groups $\Sigma_p$ and $\Sigma_q$ on ${\mathcal V}^p_q$. Tensor product gives a bilinear map ${\mathcal V}^{p_1}_{q_1}\times {\mathcal V}^{p_2}_{q_2}\to {\mathcal V}^{p_1+p_2}_{q_1+q_2}$. Taking partial traces give linear maps ${\mathcal V}^{p}_{q}\to {\mathcal V}^{p-1}_{q-1}$.
This combined structure makes ${\mathcal V}=\bigoplus_{p,q\geq 0}{\mathcal V}^p_q$ into a wheeled PROP. 
If $V$ is a representation of an algebraic group $G$, then the space ${\mathcal V}^G=\bigoplus_{p,q\geq 0}({\mathcal V}^p_q)^G$
of $G$-invariant tensors is also a wheeled PROP. Another example of a wheeled PROP is $R\otimes {\mathcal V}$ where
$R$ is a commutative $K$-algebra with identity.

The main goal of this paper is to develop the language of wheeled PROPs in the context of Classical Invariant Theory. 
One ingredient of the language is the use of wire diagrams (see~\cite[\S 2.11]{Landsberg12}). Such diagrams were already used by Clifford in the 19th century and also appear in the work of Feynman and Penrose. One can use a graphical calculus to do computations in representation theory. See for example
the book \cite{Cvitanovic08} of Cvitanovi\'c, who calls such diagrams bird tracks.
A tensor in ${\mathcal V}^p_q$ can be represented by a black box with $p$ inputs and $q$ outputs. 
We also will represent tensors in a more compact form using a notation  for tensors that is similar to Einstein's (see~\cite{Einstein16}).
The main results are as follows:
\begin{itemize}
\item The initial object in the category of wheeled PROPs (over $K$) is denoted by ${\mathcal Z}$. We give a complete classification of the ideals of ${\mathcal Z}$ in Section~\ref{sec4}. We also classify all prime ideals of ${\mathcal Z}$. There is a natural analogy between the category of wheeled PROPs and
the category of commutative rings with identity. The initial object in the category of commutative rings with identity is the ring of integers $\Z$.
Understanding the ideals and prime ideals of $\Z$ is essential for understanding more complicated commutative rings. Similarly, the classification
of ideals and prime ideals of ${\mathcal Z}$ is essential for understanding wheeled PROPs.
\item In Section~\ref{sec5} give an equivalence of categories between wheeled PROPs that appear as sub-wheeled PROPs of $R\otimes {\mathcal V}$ for some commutative $K$-algebra $R$ and the category of commutative $K$-algebras with a regular action of $\GL(V)$.
\item In the case where $V$ is an $n$-dimensional complex Hilbert space, Schrijver gave in \cite{Schrijver08} a correspondence between
compact subgroups of the unitary group $U\subseteq \GL(V)$ and subspaces of ${\mathcal V}$ that are closed under the permutation actions,
tensor product, contractions and the star operation (that comes from the isomorphism $V\cong \V^\star$). We formulate and prove a similar result for reductive groups instead of compact groups in Section~\ref{sec6}. This gives a correspondence between closed reductive subgroups of $\GL(V)$
and simple sub-wheeled PROPs of ${\mathcal V}$.
\item Procesi proved that a trace ring can be embedded into the matrix ring $\Mat_n(R)$ with coefficients in some commutative ring $R$ with identity if and only if the trace ring satisfies the $n$-th Cayley-Hamilton relation (see~\cite{Procesi87}). In Section~\ref{sec7}, we give a similar characterization of wheeled PROPs that
appear as sub-wheeled PROPs of $R\otimes {\mathcal V}$ for some commutative $K$-algebras. This generalizes Procesi's theorem.

\end{itemize}

In Section~\ref{sec2} we will give the definition of a wheeled PROPs and in Section~\ref{sec3} we will study their properties.

\section{Definition of wheeled PROPs}\label{sec2}
Fix an algebraically closed field $K$ of characteristic $0$.

\subsection{Mixed tensor algebra} \label{mta}
The archetypal example of a wheeled PROP is the mixed tensor algebra. We introduce the mixed tensor algebra, so as to illustrate the various features of wheeled PROPs in a very concrete fashion. 
Let $V$ be an $n$-dimensional vector space over $K$ and $V^\star$ be its dual space. The tensor algebra on $V$ is $T(V)=\bigoplus_{q=0}^\infty V^{\otimes q}$ where
$$
V^{\otimes q}:=\underbrace{V\otimes V\otimes \cdots\otimes V}_{q}
$$
is the $q$-fold tensor product.
 For $p,q\geq 0$ we define $\V^p_q :=  (V^\star)^{\otimes p} \otimes V^{\otimes q}$. In the notation ${\mathcal V}^p_q$, the upper index $p$ corresponds to the contravariant part, and the lower index $q$ corresponds to the covariant part of the tensor product.
 We consider the mixed tensor algebra 
 $$\V :=T(V^\star\oplus V)\cong T(V^\star)\otimes T(V)\cong \bigoplus\limits_{p,q \in \N} \V^p_q.$$ This is a bigraded associative algebra with multiplication $\otimes$ and unit $1 \in \V_0^0 = K$. There is another special element, the identity $\id \in \V_1^1 = \End(V)$. Let $\Sigma_n$ denote the symmetric group on $n$ letters. We have an action of $\Sigma_p \times \Sigma_q$ on $\V^p_q$ by  permuting the tensor factors. Other interesting operations on $\V$  are the contraction maps $\partial_i^j : \V^p_q \rightarrow \V^{p-1}_{q-1}$ given by 
$$
\partial_i^j(f_1 \otimes \dots \otimes f_p \otimes v_1 \otimes \dots \otimes v_q)  =  f_j(v_i)  ( f_1 \otimes \cdots f_{j-1} \otimes f_{j+1} \dots \otimes f_p \otimes v_1 \otimes \cdots v_{i-1} \otimes v_{i+1} \dots \otimes v_q).
$$

We can identify $\V^p_q$ with $\Hom(V^{\otimes p}, V^{\otimes q})$. Under this identification, $\partial_i^j$ is a partial trace.

We introduce a pictorial representation of $\V$, which will serve as a motivational tool for defining wheeled PROPs. An element in $\V^p_q$ can be thought of as a map from $V^{\otimes p}$ to $V^{\otimes q}$. We will visualize this as a black box with $p$ inputs on top and $q$ outputs at the bottom. For example, we visualize $A \in \V_1^2$ with the following picture:

\centerline{\includegraphics[height=.8in]{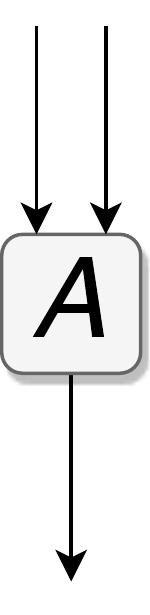}}




The contraction $\partial_i^j$ can be visualised as connecting the $j^{th}$ input and the $i^{th}$ output. 
We visualize $ \partial_1^2(A)$ by the diagram

\centerline{\includegraphics[height=1.1in]{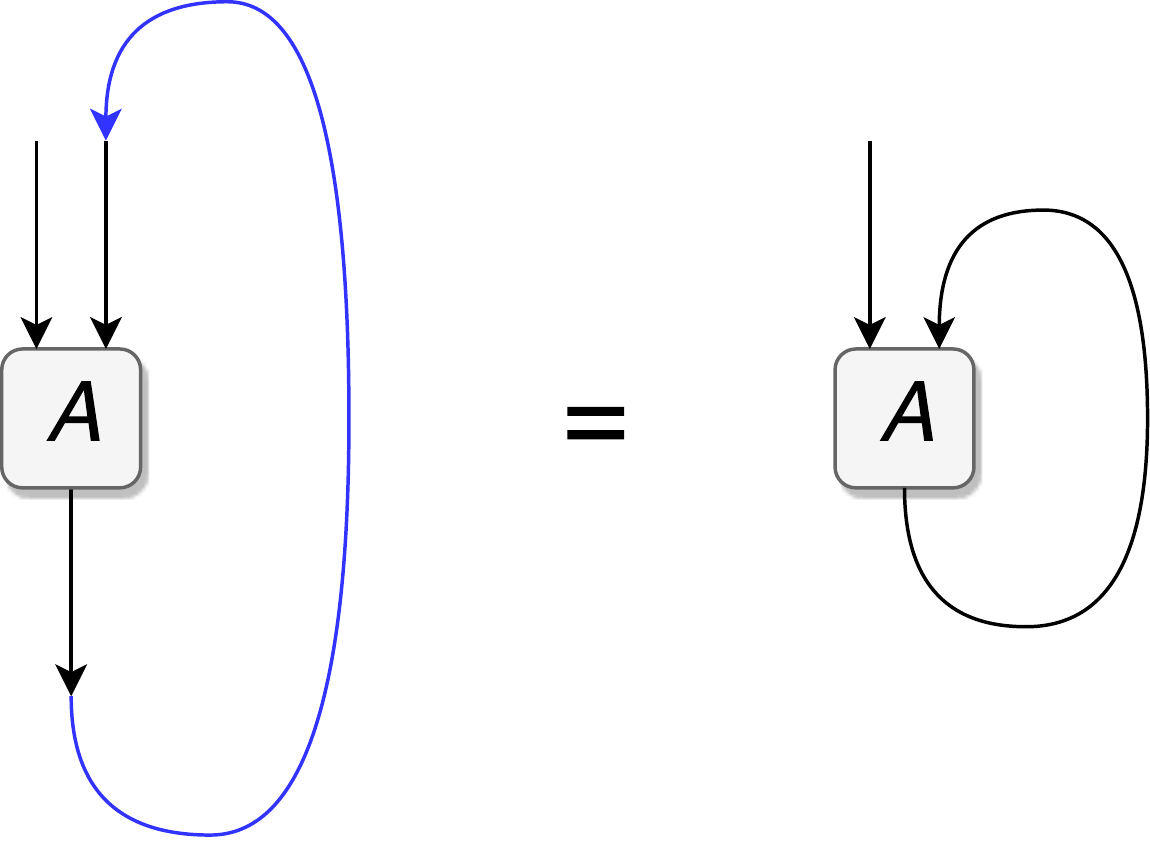}}

The tensor product corresponds to simply putting the diagrams next to each other. For example, if $A \in \V^2_1$ and $B \in \V^0_1$, then we have

\centerline{\includegraphics[height=.8in]{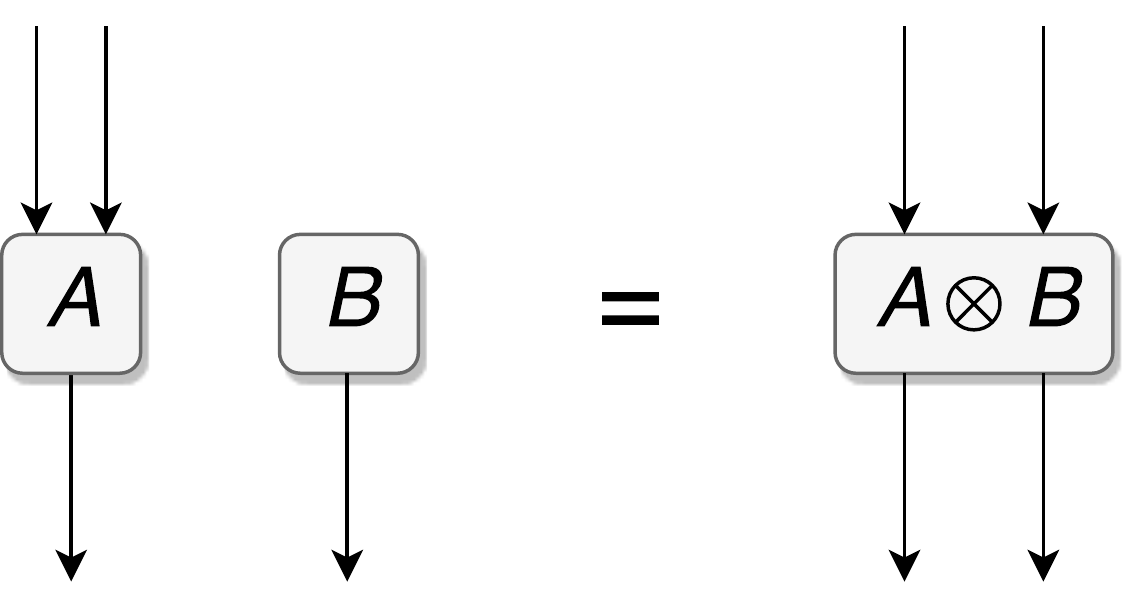}}

The space $\V_1^1 \cong \End(V)$ is a ring with unity.
For example, if $A,B\in \V_1^1$, then the product is given by $AB=\partial^1_2(A\otimes B)$:

\centerline{\includegraphics[height=1.1in]{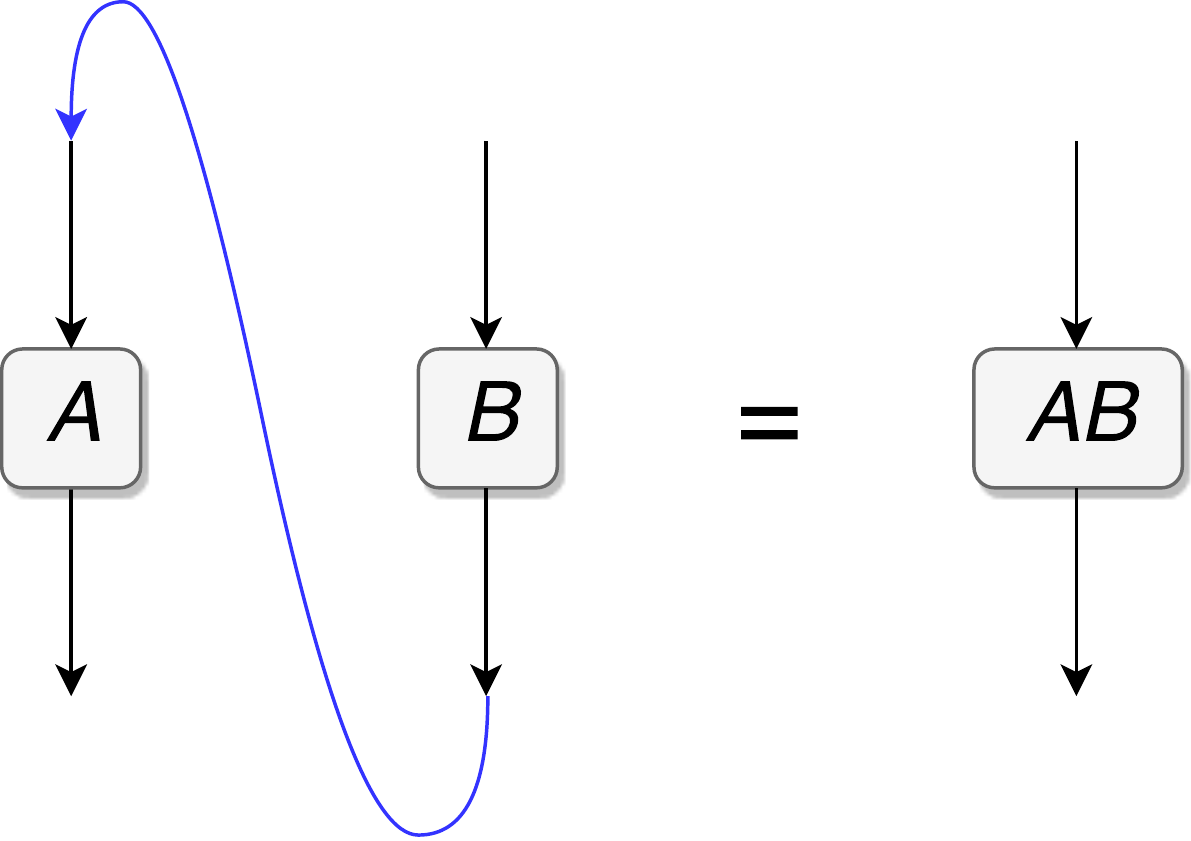}}

 The identity ${\rm id} \in \V^1_1$ will be denoted by a single directed edge without any labels:

\centerline{\includegraphics[height=.8in]{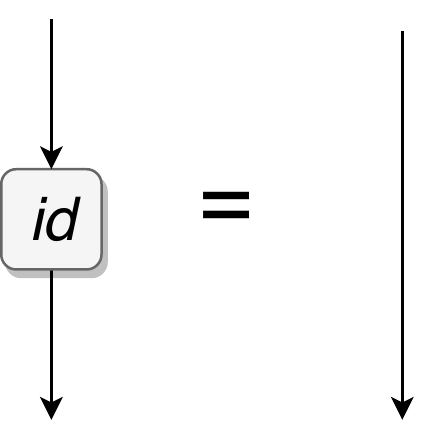}}

If we connect the output of the identity to its input, then we get $\Tr(\id)=n$, the dimension of $V$:

\centerline{\includegraphics[height=.7in]{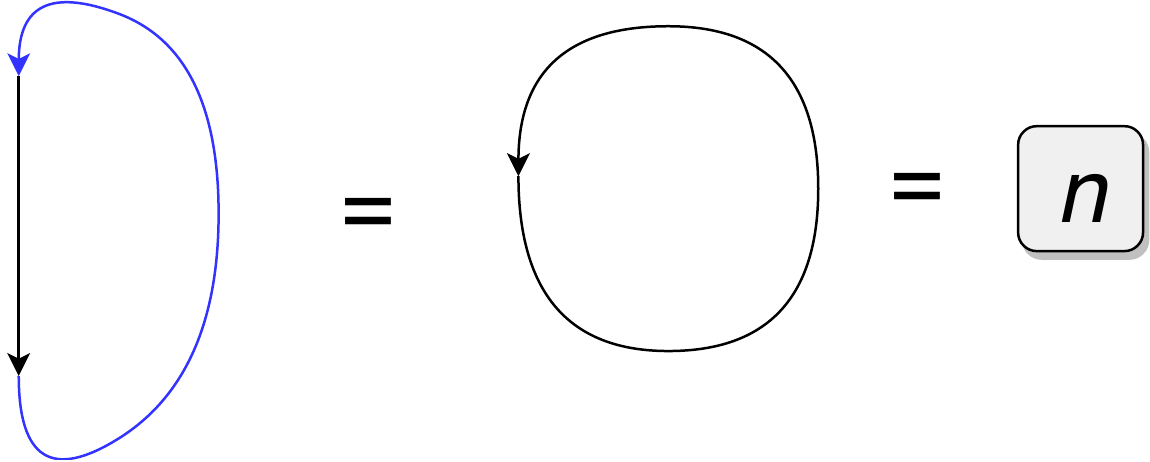}}

That the element ${\rm id} \in \V^1_1$ acts as the identity can be listed out as a set of conditions, and the following picture gives an example. 

\centerline{\includegraphics[height=1.1in]{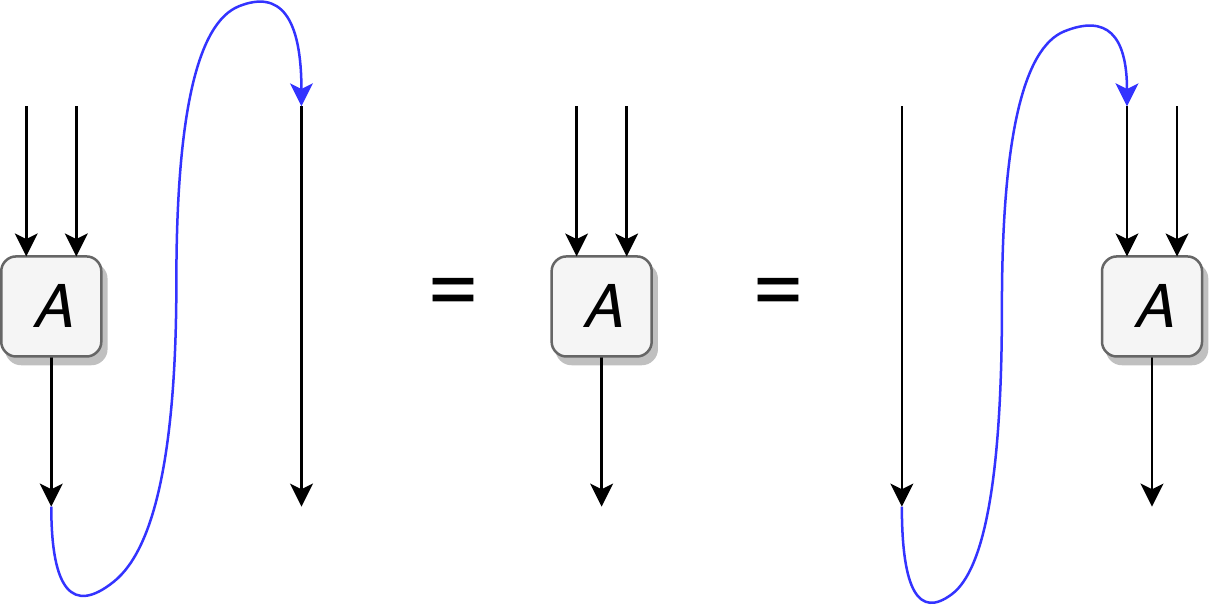}}

For $\sigma \in \Sigma_n$, we look at the map $\sigma: V^{\otimes n} \rightarrow V^{\otimes n}$ given by $v_1 \otimes \dots \otimes v_n \mapsto v_{\sigma^{-1}(1)} \otimes \dots \otimes v_{\sigma^{-1}(n)}$. For example, the permutation 3124 (in one line notation) is represented by the picture below.

\centerline{\includegraphics[height=.8in]{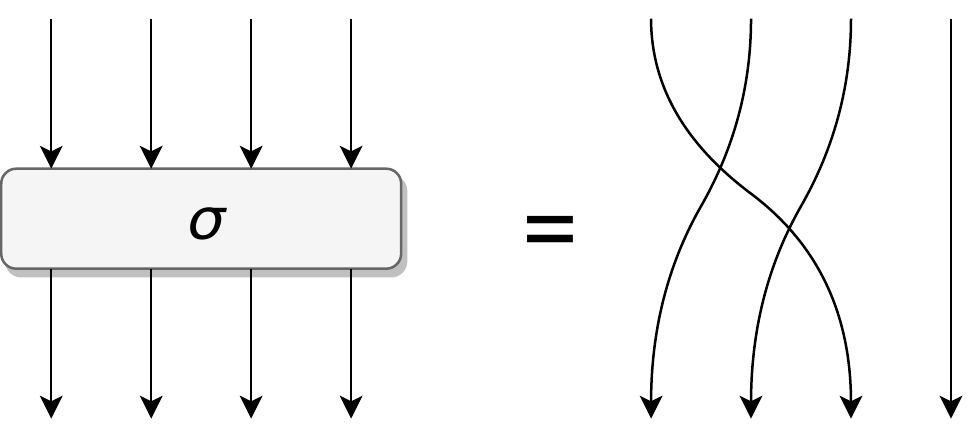}}

We denote the diagram above by $[\sigma]$.
\subsection{Pre-wheeled PROPs}
We first define pre-wheeled PROPs. Wheeled PROPs are pre-wheeled PROPs that satisfy certain axioms. Instead of giving a list of axioms,
we will define free wheeled PROPs and then define wheeled PROPs as quotients of free wheeled PROPs. Because it can be visualized, 
the notion of a free wheeled PROP is easy to grasp.
\begin{definition}
A {\em pre-wheeled PROP} is a bigraded $K$-vector space ${\mathcal R}=\bigoplus_{p,q\in \Z_{\geq 0}}{\mathcal R}^p_q$ together with
\begin{enumerate}
\item a special element $1_{\mathcal R}\in {\mathcal R}^0_0$;
\item a special element $\downarrow_{\mathcal R}\in {\mathcal R}^1_1$ called the identity;
\item a bilinear map $\otimes: {\mathcal R}^{p_1}_{q_1}\times {\mathcal R}^{p_2}_{q_2}\to {\mathcal R}^{p_1+p_2}_{q_1+q_2}$ for all $p_1,p_2,q_1,q_2\geq 0$;
\item and for all $i,j,p,q$ with $1\leq i\leq p$ and $1\leq j\leq q$ a linear map $\partial^i_j:{\mathcal R}^p_q\to {\mathcal R}^{p-1}_{q-1}$.
\end{enumerate}
\end{definition}
For a wheeled-PROP ${\mathcal R}$ there is an action of $\Sigma_p\times \Sigma_q$ on ${\mathcal R}^p_q$. Since the action
of $\Sigma_p\times \Sigma_q$ can be expressed in terms of tensor products and contractions, we will not use this action
in the definition of a pre-wheeled PROP.

\begin{definition}
A ${\mathcal R},{\mathcal S}$ are pre-wheeled PROPs, then a homomorphism $\phi:{\mathcal R}\to{\mathcal S}$ of pre-wheeled PROPs is a linear map $\phi:{\mathcal R}\to {\mathcal S}$ such that
\begin{enumerate}
\item $\phi({\mathcal R}^p_q)\subseteq {\mathcal S}^p_q$ for all $p,q\geq 0$, i.e., $\phi$ preserves the bigrading;
\item $\phi(1_{\mathcal R})=1_{\mathcal S}$;
\item $\phi(\downarrow_{\mathcal R})=\downarrow_{\mathcal S}$;
\item $\phi(A\otimes B)=\phi(A)\otimes \phi(B)$ for $A\in {\mathcal R}^{p_1}_{q_1}$, $B\in {\mathcal R}^{p_2}_{q_2}$;
\item $\phi(\partial^i_j(A))=\partial^i_j(\phi(A))$ for $A\in {\mathcal R}^p_q$.
\end{enumerate}
\end{definition}
If ${\mathcal S}$ is a pre-wheeled PROP and ${\mathcal R}^p_q\subseteq {\mathcal S}^p_q$ is a subspace for all $p,q\geq 0$, then ${\mathcal R}=\bigoplus_{p,q\geq0} {\mathcal R}^p_q$ is a sub-wheeled PROP when it contains
$1_{\mathcal R}$, $\downarrow_{\mathcal R}$ and it is closed under $\otimes$ and $\partial^i_j$ for all $i,j$. If ${\mathcal R}$ is a sub-wheeled PROP of ${\mathcal S}$  then it is easy to see that it is a pre-wheeled PROP and that  the inclusion ${\mathcal R}\hookrightarrow {\mathcal S}$ is a homomorphism of pre-wheeled PROPs.

If ${\mathcal S}$ is a pre-wheeled PROP and ${\mathscr G}\subseteq \bigcup_{p,q\geq 0} {\mathcal S}^p_q$ is a subset, then we say that ${\mathscr G}$ generates ${\mathcal S}$
if the smallest sub-wheeled PROP of ${\mathcal S}$ containing ${\mathscr G}$ is ${\mathcal S}$ itself. This is exactly the case when every element of ${\mathcal S}$
can be obtained from ${\mathscr G}\cup \{1,\downarrow\}$ by using operations $\otimes$, $\partial^i_j$ $(i,j\geq 1)$ and taking $K$-linear combinations.
It is clear that if ${\mathscr G}$ generates ${\mathcal S}$ and $\psi_1,\psi_2:{\mathcal S}\to {\mathcal R}$ are homomorphisms of pre-wheeled PROPs, then $\psi_1=\psi_2$ if and only if the restrictions of $\psi_1$ and $\psi_2$ to ${\mathscr G}$ are equal.

\begin{definition}
Suppose that ${\mathcal R}$ is a pre-wheeled PROP. An ideal ${\mathcal J}$ of ${\mathcal R}$ is a subspace ${\mathcal J}=\bigoplus_{p,q\in \Z_{\geq 0}} {\mathcal J}^p_q$
with ${\mathcal J}^p_q\subseteq {\mathcal R}^p_q$ for all $p,q$ such that
\begin{enumerate}
\item if $A\in {\mathcal R}^{p_1}_{q_1}$ and $B\in {\mathcal J}^{p_2}_{q_2}$ then $A\otimes B,B\otimes A\in {\mathcal J}^{p_1+p_2}_{q_1+q_2}$;
\item if $A\in {\mathcal J}^p_q$ then $\partial^i_j(A)\in {\mathcal J}^{p-1}_{q-1}$.
\end{enumerate}
\end{definition}
The following lemma is left to the reader.
\begin{lemma}\ 
\begin{enumerate}
\item
If $\phi:{\mathcal R}\to {\mathcal S}$ is a homomorphism of pre-wheeled PROPs, then $\ker(\phi)$ is an ideal;
\item if ${\mathcal J}$ is an ideal of the pre-wheeled PROP ${\mathcal R}$ then the quotient ${\mathcal R}/{\mathcal J}=\bigoplus_{p,q\in \Z_{\geq 0}} {\mathcal R}^p_q/{\mathcal J}^p_q$ has the structure of a pre-wheeled PROP where
\begin{enumerate}
\item $1_{{\mathcal R}/{\mathcal J}}=1_{\mathcal R}+{\mathcal J}^0_0\in {\mathcal R}^0_0/{\mathcal J}^0_0$;
\item $\downarrow_{{\mathcal R}/{\mathcal J}}=\downarrow_{\mathcal R}+{\mathcal J}^1_1\in {\mathcal R}^1_1/{\mathcal J}^1_1$;
\item $(A+{\mathcal J}^{p_1}_{q_1}) \otimes (B+{\mathcal J}^{p_2}_{q_2})=A\otimes B+{\mathcal J}^{p_1+p_2}_{q_1+q_2}$ when $A\in {\mathcal R}^{p_1}_{q_1}$ and $B\in {\mathcal R}^{p_2}_{q_2}$;
\item $\partial^i_j(A+{\mathcal J}^p_q)=\partial^i_j(A)+{\mathcal J}^{p-1}_{q-1}$ when $A\in {\mathcal R}^p_q$.
\end{enumerate}
\end{enumerate}
\end{lemma}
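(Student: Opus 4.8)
The plan is to verify both assertions by direct, elementary computation. The key observation that makes part~(2) short is that the definition of a pre-wheeled PROP imposes \emph{no axioms} beyond the data itself (a bigraded vector space, two distinguished elements in ${\mathcal R}^0_0$ and ${\mathcal R}^1_1$, a bilinear product $\otimes$, and linear contraction maps $\partial^i_j$); consequently, once we know the proposed operations on cosets are well defined, there is nothing further to check.

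For part~(1): since $\phi$ preserves the bigrading, $\ker(\phi)=\bigoplus_{p,q}\ker(\phi)^p_q$ with $\ker(\phi)^p_q:=\ker(\phi)\cap{\mathcal R}^p_q$, so $\ker(\phi)$ is automatically a bigraded subspace. I then check the two ideal axioms. For the first, given $A\in{\mathcal R}^{p_1}_{q_1}$ and $B\in\ker(\phi)^{p_2}_{q_2}$ we have $\phi(A\otimes B)=\phi(A)\otimes\phi(B)=\phi(A)\otimes 0=0$ by bilinearity of $\otimes$ in ${\mathcal S}$, hence $A\otimes B\in\ker(\phi)$, and symmetrically $B\otimes A\in\ker(\phi)$. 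For the second, if $A\in\ker(\phi)^p_q$ then $\phi(\partial^i_j(A))=\partial^i_j(\phi(A))=\partial^i_j(0)=0$ by linearity of $\partial^i_j$ in ${\mathcal S}$, so $\partial^i_j(A)\in\ker(\phi)$.

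For part~(2): the only real content is well-definedness of $\otimes$ and $\partial^i_j$ on cosets; bilinearity of $\otimes$ and linearity of $\partial^i_j$ are then inherited from ${\mathcal R}$, and $1_{\mathcal R}+{\mathcal J}^0_0$, $\downarrow_{\mathcal R}+{\mathcal J}^1_1$ visibly lie in the correct graded pieces. For $\otimes$, suppose $A-A'\in{\mathcal J}^{p_1}_{q_1}$ and $B-B'\in{\mathcal J}^{p_2}_{q_2}$; writing $A\otimes B - A'\otimes B' = (A-A')\otimes B + A'\otimes(B-B')$ and applying ideal axiom~(1) to each summand shows $A\otimes B - A'\otimes B'\in{\mathcal J}^{p_1+p_2}_{q_1+q_2}$. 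For $\partial^i_j$, if $A-A'\in{\mathcal J}^p_q$ then $\partial^i_j(A)-\partial^i_j(A')=\partial^i_j(A-A')\in{\mathcal J}^{p-1}_{q-1}$ by linearity and ideal axiom~(2). With the operations well defined, ${\mathcal R}/{\mathcal J}$ satisfies the definition of a pre-wheeled PROP verbatim, and as a byproduct the quotient map ${\mathcal R}\to{\mathcal R}/{\mathcal J}$ is a homomorphism of pre-wheeled PROPs.

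There is essentially no obstacle here: this is a bookkeeping verification of the usual ``kernels are ideals, quotients by ideals make sense'' pattern. The only points meriting a moment's care are keeping the bigrading straight so that each operation lands in the intended graded piece of ${\mathcal J}$, and recalling that no compatibility relations among $1$, $\downarrow$, $\otimes$, and the $\partial^i_j$ are part of the definition, so one should not go looking for extra axioms to transport to the quotient.
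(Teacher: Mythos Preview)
Your proof is correct and is precisely the routine verification the authors have in mind: in the paper this lemma is explicitly ``left to the reader,'' with no proof given. Your observation that a pre-wheeled PROP carries only data and no axioms is exactly the point that reduces part~(2) to a well-definedness check.
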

It is easy to see that ${\mathcal V}$ defined in Section~\ref{mta} has the structure of a pre-wheeled PROP. 
\subsection{Free wheeled PROPs}
To define wheeled PROPs, we will first define {\em free} wheeled PROPs. Arbitrary wheeled PROPs  will then  be defined as pre-wheeled PROPs that are quotients of wheeled PROPs.
To construct a free wheeled PROP, we start with a  set ${\mathscr G}$ of generators, and a function $\type:{\mathscr G}\to \Z_{\geq 0}^2$.  
We also fix a countable infinite set ${\mathcal X}$ of variables. A generator $A\in {\mathscr G}$ with $\type(A)={p\choose q}$ will be graphically represented as a black box labeled $A$ with $p$ inputs (next to each other) and $q$ outputs. 
(The type is called {\em biarity} in \cite{MMS09}.)

\centerline{\includegraphics[height=.8in]{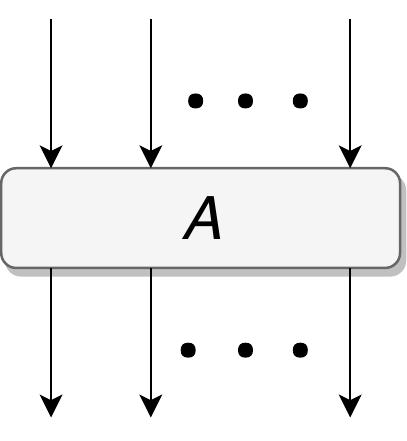}}


\begin{definition}
An {\em atom} is an expression of one of the following forms:
\begin{enumerate}
\item $A^{x_1,x_2,\dots,x_p}_{y_1,y_2,\dots,y_q}$ where $A$ is a generator of type ${p\choose q}$, $x_1,x_2,\dots,x_p$ are distinct variables, and $y_1,y_2,\dots,y_q$ are distinct variables;
\item $\downarrow^{x}_y$ where $x$ and $y$ are variables.
\end{enumerate}
\end{definition}
Variables that appear as an upper index will be referred to as {\em input variables} and variables that appear as lower index are called {\em output variables}.
An atom $A^{x_1,x_2,\dots,x_p}_{y_1,y_2,\dots,y_q}$ of the first kind is graphically represented as the generator $A$ where the inputs are labeled $x_1,x_2,\dots,x_p$ clockwise
 and the outputs are labeled $y_1,y_2,\dots,y_q$ counterclockwise.
 
 \centerline{\includegraphics[height=.8in]{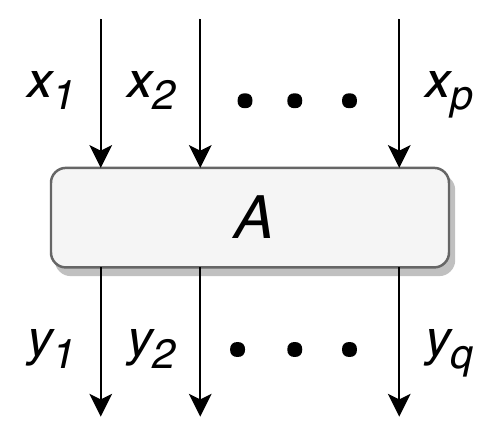}}
 
An atom $\downarrow^x_y$ of the second kind will graphically be represented by an  arrow where the tail is labeled by $x$ and the head is labeled by $y$:
\begin{center}
\begin{tikzpicture}
\draw[->] (0,1) to (0,-1);
\node at (-.2,.7) (a) {$\scriptstyle x$};
\node at (-.2,.-.7) (b) {$\scriptstyle y$};
\end{tikzpicture}
\end{center}
\begin{example}\label{ex:ex1}

Suppose that ${\mathscr G}=\{A,B\}$ and $A$ and $B$ are of type ${2\choose 1}$ and ${0\choose 1}$ respectively, and ${\mathcal X}=\{w,x,y,z,\dots\}$. Then examples of atoms are
$$
A^{x,y}_z, A^{z,y}_z, B_x, \downarrow^x_y, \downarrow^z_z.
$$
But $A^{x,x}_y$ is not an atom because the variable $x$ appears as an input twice.
\end{example}

\begin{definition}
A {\em molecule} is an unordered finite sequence $M$ (or set) of atoms where
 every variable appears at most once as an input variable of an atom and  at most once as an output variable of an atom in $M$.
 An {\em input variable} of the molecule $M$  is a variable that appears as an input of an atom, but not as an output of an atom in $M$.
An {\em output variable} of $M$ is a variable that appears as an output of an atom, but not as an input of an atom in $M$.
A {\em bound variable} is a variable that appears as an input variable of an atom, and as an output variable of an atom in the molecule. 
A {\em free variable} is a variable that is an input or an output variable. If $x_1,x_2,\dots,x_p$ are the input variables of $M$ ordered from small to large,
and $y_1,y_2,\dots,y_q$ are the output variables of $M$ ordered from small to large then we denote the molecule by $M^{x_1,x_2,\dots,x_p}_{y_1,y_2,\dots,y_q}$.
\end{definition}
\begin{example}\label{ex:ex2}
Let ${\mathscr G}=\{A,B\}$ and ${\mathcal X}$ as in Example~\ref{ex:ex1}. The expression $A^{x,z}_y \downarrow^y_wB_x$ is a
molecule where $\{x,y\}$ are the bound variables, $z$ is an input variable, $w$ is an output variable and $\{z,w\}$ are the free variables.
The expression $\downarrow^s_s A^{z,x}_y A^{y,w}_w \downarrow^t_z$ is a molecule with bound variables $\{w,y,z\}$, input variables $\{t,x\}$ and no output variables. 
\end{example}
Graphically, we represent a molecule by first drawing all the atoms in the molecule. Whenever some variable $x$ appears as an output and input label, then
we connect that output and input. It may be unavoidable that some connections intersect.  

\begin{example}
The diagram of the molecule $\downarrow^s_s A^{z,x}_y A^{y,w}_w \downarrow^t_z$ from Example~\ref{ex:ex2} is:

\centerline{\includegraphics[height=1.4in]{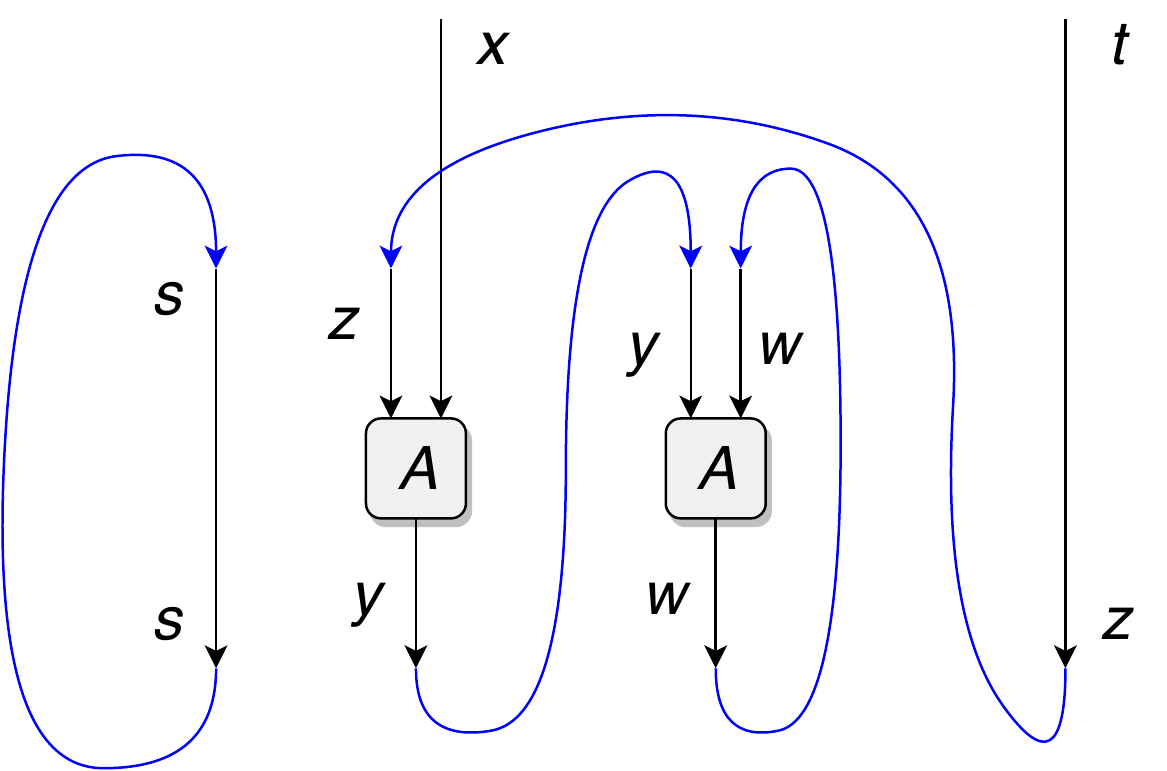}}

%
%
%
%
%
\end{example}
\begin{definition}
We define an equivalence relation $\sim$ on molecules. The equivalence relation is generated by the following rules:
$M^{x_1,\dots,x_p}_{y_1,\dots,y_q}$ is equivalent to $N^{x_1,\dots,x_p}_{y_1,\dots,y_q}$ if
$N$ is obtained from $M$ by
\begin{enumerate}
\item replacing $\downarrow^x_y\downarrow^y_z$ by $\downarrow^x_z$;
\item replacing $\downarrow^{y_i}_zA^{x_1,x_2,\dots,x_p}_{y_1,y_2,\dots,y_q}$ by $A^{x_1,x_2,\dots,x_p}_{y_1,\dots,y_{i-1},z,y_{i+1},\dots,y_q}$ for $A\in {\mathscr G}$ of type ${p\choose q}$ and all $i$;
\item replacing $\downarrow^{z}_{x_i}A^{x_1,x_2,\dots,x_p}_{y_1,y_2,\dots,y_q}$ by $A^{x_1,\dots,x_{i-1},z,x_{i+1},\dots x_p}_{y_1,y_2,\dots,y_q}$  for $A\in {\mathscr G}$ of type ${p\choose q}$ and all $i$.
\end{enumerate}
Moreover, the molecules
$M^{x_1,\dots,x_p}_{y_1,\dots,y_q}$ and $N^{s_1,\dots,s_p}_{t_1,\dots,t_q}$ are equivalent when
\begin{itemize}
\item[(4)] there exists a bijection $\varphi:{\mathcal X}\to {\mathcal X}$ such that $\varphi(z)=z$ for every free variable $z$ of $M$ 
and $N$ is obtained from $M$ by replacing $z$ by $\varphi(z)$ for every bound variable of $M$.
\end{itemize}
 The equivalence clase of $M$ is denoted by $[M]$. If $M$ and $N$ are molecules that do not have common input variables, or common output variables,
 then we can define the product $[M][N]$ as follows. We can choose molecules $M'$ and $N'$ such that $M\sim M'$, $N\sim N'$
 such that the bound variables of $M'$ do not appear in $N'$ and the bound variables of $N'$ do not appear in $M'$. Then we define
 $[M][N]=[M'N']$. It is easy to verify that this product is well-defined and commutative. If $M^{(1)},M^{(2)},\dots,M^{(r)}$ are molecules
 such that every variable appears at most once as an input variable, and every variable appears at most once as an output variable. Then
 the product $[M^{(1)}][M^{(2)}]\cdots [M^{(r)}]$
 is well-defined and independent of the order in which we multiply the equivalence classes of molecules. Indeed, we can choose molecules
 $N^{(1)},\dots,N^{(r)}$ such that for all $i\neq j$ no bound variable of $N^{(i)}$ appears as a variable in $N^{(j)}$. Then
 we have $[M^{(1)}][M^{(2)}]\cdots [M^{(r)}]=[N^{(1)}N^{(2)}\cdots N^{(r)}]$.
\end{definition}

Suppose that $M$ is a molecule and $[M]$ is its equivalence class. By rules (1)--(3) we can find an equivalent molecule $M'$ such that for every atom of the form $\downarrow^x_y$ with $x$ and $y$ distinct, $x$ is an input variable and $y$ is an output variable of $M'$. Such a molecule $M'$ we will call {\em reduced}. 
To draw the diagram of $[M]$ we  draw the diagram of $M'$ where we omit all the labels that are bound variables, because the equivalence class $[M]$ does not depend on the labels of the bound variables in $M'$.

\begin{example}
The diagram  of the equivalence class $[\downarrow^s_s A^{z,x}_y A^{y,w}_w\downarrow^t_z]$ of the molecule from  Example~\ref{ex:ex2} is 

\centerline{\includegraphics[height=.8in]{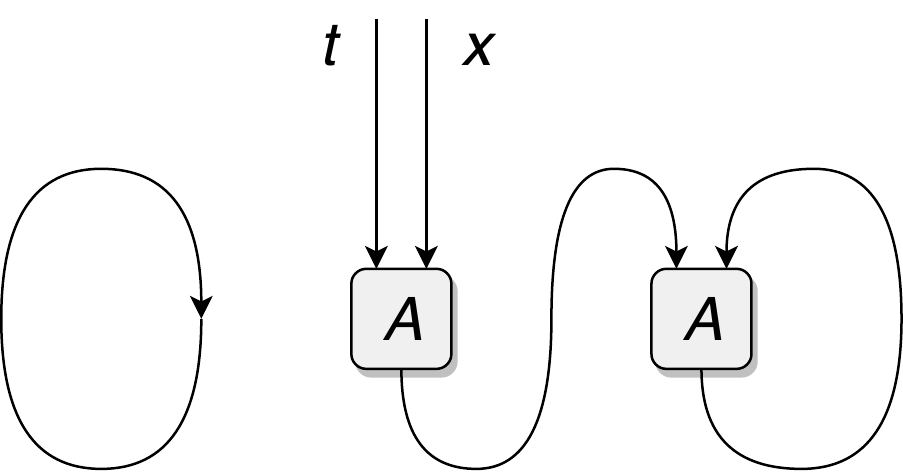}}

\begin{definition}
A {\em momomial} is an equivalence class of a molecule together with a total ordering on the input variables, and a total ordering on the output variables. If the molecule $M$ has input variables $x_1,\dots,x_p$ and output variables $y_1,\dots,y_q$ then the monomial with equivalence class $[M]$ and orderings $x_1<x_2<\cdots<x_p$ and $y_1<y_2<\cdots<y_q$ will be denoted by
$[M]^{x_1,x_2,\dots,x_p}_{y_1,y_2,\dots,y_q}$.
\end{definition}
If $A\in {\mathscr G}$ is a variable of type ${p\choose q}$ then we can view $A$ as a monomial by identifying it with $[A^{x_1,\dots,x_p}_{y_1,\dots,y_q}]^{x_1,\dots,x_p}_{y_1,\dots,y_q}$.

\end{example}
\begin{definition}
For a set ${\mathscr G}$ of generators and a function $\type:{\mathscr G}\to \Z_{\geq 0}^2$ we define the free wheeled PROP generated by ${\mathscr G}$
as the bigraded vector space ${\mathcal Z}\langle {\mathscr G}\rangle=\bigoplus_{p,q\in \Z_{\geq 0}}{ \mathcal Z}\langle {\mathscr G}\rangle^p_q$ where ${ \mathcal Z}\langle {\mathscr G}\rangle^p_q$ is the $K$-vector space with a basis consisting of all monomials with $p$ inputs and $q$ outputs. To give ${\mathcal Z}\langle {\mathscr G}\rangle $ the structure of a pre-wheeled PROP, we define:
\begin{enumerate}
\item $1=[\,]=[\emptyset]$ (equivalence class of the empty module);
\item $\downarrow=[\downarrow^x_y]^x_y$ (where $x,y$ are distinct variables);
\item if $M^{x_1,\dots,x_p}_{y_1,\dots,y_q}$ and $N^{z_1,\dots,z_r}_{w_1,\dots,w_s}$ have disjoint variables, then we define
$$[M]^{x_1,\dots,x_p}_{y_1,\dots,y_q}\otimes [N]^{z_1,\dots,z_r}_{w_1,\dots,w_s}=[M][N]^{x_1,\dots,x_p,z_1,\dots,z_r}_{y_1,\dots,y_q,w_1,\dots,w_s}.
$$
In terms of diagrams, we get the diagram of $[M]^{x_1,\dots,x_p}_{y_1,\dots,y_q}\otimes [N]^{z_1,\dots,z_r}_{w_1,\dots,w_s}$ by drawing the diagram of $[N]^{z_1,\dots,z_r}_{w_1,\dots,w_s}$
to the right of the diagram of $[M]^{x_1,\dots,x_p}_{y_1,\dots,y_q}$.

\item we define
$$
\partial^i_j[M]^{x_1,\dots,x_p}_{y_1,\dots,y_q}=[M']^{x_1,\dots,x_{i-1},x_{i+1},\dots,x_p}_{y_1,\dots,y_{j-1},y_{j+1},\dots,y_{q}},
$$
where $M'$ is obtained from $M$ by replacing $y_j$ by $x_i$.
So the diagram of $\partial^i_j[M]^{x_1,\dots,x_p}_{y_1,\dots,y_q}$ is obtained by connecting the $j$-th output of $[M]^{x_1,\dots,x_p}_{y_1,\dots,y_q}$ to the $i$-th input.
\end{enumerate}
\end{definition}
\begin{definition}
A {\em wheeled PROP} is a pre-wheeled PROP ${\mathcal A}$ such that there exists a surjective homomorphism of pre-wheeled PROPs ${\mathcal Z}\langle {\mathscr G}\rangle\to {\mathcal A}$
for some set of generators ${\mathscr G}$. If ${\mathcal R}$ and ${\mathcal S}$ are wheeled PROPs then a homomorphism $\psi:{\mathcal R}\to {\mathcal S}$ is just a homomorphism of pre-wheeled PROPs.
\end{definition}
For a free wheeled PROP ${\mathcal Z}\langle {\mathscr G}\rangle\to {\mathcal A}$ there is an
action of $\Sigma_p\times \Sigma_q$ on ${\mathcal Z}\langle {\mathscr G}\rangle^p_q$ by permuting the inputs and outputs.
the action of an element in $\Sigma_p$ or $\Sigma_q$ can be constructed by taking the tensor product with copies of $\downarrow$ and doing certain contractions. For example,  $\partial^i_{q+1}(A\otimes \downarrow_{\mathcal R})$ is equivalent to applying the cyclic permutation 
 $(i+1\ i+2\ \cdots\ p\ i)$ to the inputs. Such cycles generate $\Sigma_p$. A similar thing can be done for the outputs.
 In particular, ideals in the free wheeled PROP are stable under the action of $\Sigma_p\times\Sigma_q$
and $\Sigma_p\times \Sigma_q$ also acts on the quotient. This means that for every wheeled PROP ${\mathcal R}$ we have
an action of $\Sigma_p\times\Sigma_q$ on ${\mathcal R}^p_q$.
 By expressing the action of $\Sigma_p\times \Sigma_q$ in terms of tensor product and contractions, 
 we see that  for a homomorphism of wheeled PROPs $\phi:{\mathcal R}\to {\mathcal S}$ the map $\phi:{\mathcal R}^p_q\to {\mathcal S}^p_q$
is equivariant with respect to the group $\Sigma_p\times \Sigma_q$. Also, if ${\mathcal S}$ is a sub-wheeled PROP or an ideal of ${\mathcal R}$,
 then ${\mathcal S}^p_q$ is stable under the action of $\Sigma_p\times \Sigma_q$ for all $p,q$.

\section{Properties of free wheeled PROPs}\label{sec3}
\subsection{The universal property}
\begin{proposition}[Universal property of free wheeled PROPs]
Suppose that ${\mathscr G}$ is a generator set, $\type:{\mathscr G}\to \Z_{\geq 0}^2$ is a function,  
${\mathcal R}$ is a wheeled PROP and $\psi:{\mathscr G}\to {\mathcal R}$ is a function such that  $\psi(A)\in {\mathcal R}^p_q$ for all $A\in {\mathscr G}$ with $\type(A)={p\choose q}$. 
Then $\psi$ extends uniquely to a homomorphism of wheeled PROPs $\psi:{\mathcal Z}\langle {\mathscr G}\rangle\to {\mathcal R}$.
\end{proposition}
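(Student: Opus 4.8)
The plan is to prove existence and uniqueness separately, exploiting the explicit basis of $\ZZ\langle\mathscr{G}\rangle$ by monomials. For \emph{uniqueness}, I would observe that $\mathscr{G}\cup\{1,\downarrow\}$ generates $\ZZ\langle\mathscr{G}\rangle$: every monomial $[M]^{x_1,\dots,x_p}_{y_1,\dots,y_q}$ is built from the atoms of a reduced representative $M'$ by taking tensor products of the generator-atoms $A^{\dots}_{\dots}$ and copies of $\downarrow$, then applying contractions $\partial^i_j$ to connect bound variables, and finally applying a permutation in $\Sigma_p\times\Sigma_q$ (itself expressible via $\otimes$ and $\partial^i_j$ with extra $\downarrow$'s, as noted at the end of Section~\ref{sec2}) to put the free variables in the prescribed order. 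Hence by the remark preceding the definition of ideals, any two pre-wheeled PROP homomorphisms agreeing on $\mathscr{G}$ must agree; since a homomorphism of wheeled PROPs is just a homomorphism of pre-wheeled PROPs, this gives uniqueness.

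For \emph{existence}, I would define $\psi$ on the monomial basis and then check it respects the structure. Given a monomial $[M]^{x_1,\dots,x_p}_{y_1,\dots,y_q}$, pick a reduced representative, and define $\psi([M]^{\vec x}_{\vec y})\in\mathcal{R}^p_q$ by mimicking the diagram: replace each generator-atom $A^{u_1,\dots,u_a}_{v_1,\dots,v_b}$ by $\psi(A)\in\mathcal{R}^a_b$ and each $\downarrow^u_v$ by $\downarrow_\mathcal{R}$, take the $\otimes$-product of all these elements in $\mathcal{R}$ in some fixed order, then apply the contractions $\partial^i_j$ dictated by the shared (bound) variables and finally the permutation that reorders the free strands as $x_1<\dots<x_p$, $y_1<\dots<y_q$. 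One then extends $K$-linearly. The key point is that this is \emph{well-defined}: different reduced representatives of $[M]$, different orderings in which the $\otimes$-product and contractions are performed, all yield the same element of $\mathcal{R}^p_q$ — this must be checked against each of the equivalence rules (1)--(4) in the definition of $\sim$, and against the associativity/commutativity and the $\partial$-$\otimes$ compatibility that hold in \emph{any} pre-wheeled PROP (more precisely, the identities valid in every wheeled PROP, since $\mathcal{R}$ is one). Once well-definedness is established, that $\psi$ is a homomorphism — i.e.\ $\psi(1)=1_\mathcal{R}$, $\psi(\downarrow)=\downarrow_\mathcal{R}$, $\psi(M\otimes N)=\psi(M)\otimes\psi(N)$, $\psi(\partial^i_j M)=\partial^i_j\psi(M)$ — is essentially immediate from the construction, because all four operations on $\ZZ\langle\mathscr{G}\rangle$ were \emph{defined} by the corresponding diagrammatic moves, and $\psi$ was built to track those moves; and $\psi(A)$ is the given value by construction.

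The main obstacle is precisely the well-definedness of $\psi$ on equivalence classes, which is where the real content lives. Equivalence rule (4) (relabeling bound variables) is harmless since the construction never uses the names of bound variables, only the incidence pattern; rules (1)--(3) amount to absorbing a $\downarrow$ into an adjacent atom or splicing two $\downarrow$'s, and correspond under $\psi$ to the identities $\downarrow_\mathcal{R}\otimes(-)$ followed by a contraction acting as the identity — these follow from the axioms that $\downarrow_\mathcal{R}$ is an identity, which hold in every wheeled PROP. The subtler issue is that our recipe involves arbitrary choices (an ordering of the atoms for the $\otimes$-product, an order in which to perform the various $\partial^i_j$'s, and a choice of how to realize the final reordering permutation): one must verify these choices do not matter, using commutativity of $\otimes$ up to the symmetric group action, the fact that disjoint contractions commute, and that $\partial^i_j$ commutes appropriately with $\otimes$ on strands not involved — all of which are identities holding in an arbitrary wheeled PROP and can be checked at the level of the free wheeled PROP itself (or are built into how $\otimes$ and $\partial$ were set up). I would organize this as a lemma: ``the element of $\mathcal{R}^p_q$ assigned to a molecule diagram is independent of the evaluation order and of the reduced representative,'' proved by a straightforward but slightly tedious induction on the number of atoms and contractions, and then the proposition follows.
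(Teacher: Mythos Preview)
Your proposal is correct and your uniqueness argument matches the paper's exactly. For existence, however, the paper takes a different (and somewhat slicker) route: rather than working directly in ${\mathcal R}$ and verifying that the recipe is independent of all the auxiliary choices, the paper first reduces without loss of generality to the case ${\mathcal R}={\mathcal Z}\langle{\mathscr H}\rangle$ is itself free. This is legitimate because, by definition, any wheeled PROP ${\mathcal R}$ admits a surjection from some ${\mathcal Z}\langle{\mathscr H}\rangle$, so one can lift $\psi:{\mathscr G}\to{\mathcal R}$ to ${\mathscr G}\to{\mathcal Z}\langle{\mathscr H}\rangle$ by choosing preimages, extend there, and then compose with the quotient map. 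In the free target, the extension is defined purely combinatorially by \emph{diagram substitution}: replace each atom labeled $A$ in a monomial of ${\mathcal Z}\langle{\mathscr G}\rangle$ by the diagram of $\psi(A)$ (handling the general case by multilinearity when $\psi(A)$ is not a single monomial). Because the result of such a substitution is again a well-defined molecule in ${\mathcal Z}\langle{\mathscr H}\rangle$, the well-definedness issue you flag as the ``main obstacle'' essentially evaporates---there are no choices of ordering of atoms or contractions to make, since diagrams record only the incidence pattern. Your approach instead pushes those verifications into ${\mathcal R}$ and appeals to identities (commutativity of disjoint contractions, compatibility of $\partial$ and $\otimes$, the unit properties of $\downarrow$) that hold in every wheeled PROP; this is valid, since any such identity can be checked in the free case and then descends to quotients, but it is exactly the tedium the paper's reduction sidesteps. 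Either way the homomorphism axioms then follow immediately from the construction.
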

\begin{proof}
The extension, if it exists, is unique because ${\mathcal Z}\langle {\mathscr G}\rangle$ is generated by ${\mathscr G}$. We only need to show the existence of the extension.
For this, we may assume without loss of generality that ${\mathcal R}={\mathcal Z}\langle {\mathscr H}\rangle$ is a free wheeled-PROP. 

For simplicity, let us first assume that $\psi(A)$ is a monomial for every $A\in {\mathscr G}$. If $[M]^{x_1,\dots,x_p}_{y_1,\dots,y_p}\in {\mathcal Z}\langle {\mathscr G}\rangle^p_q$ is a monomial,
then the diagram of $\psi([M]^{x_1,\dots,x_p}_{y_1,\dots,y_p})$ is
obtained from the diagram of $[M]^{x_1,\dots,x_p}_{y_1,\dots,y_p}$ by replacing every atom $A^{s_1,\dots,s_q}_{t_1,\dots,t_r}$ appearing in the diagram
by the diagram of the monomial $\psi(A)$.

Suppose that for example,  ${\mathscr G}=\{A\}$ with $A$ of type ${2\choose 2}$, ${\mathscr H}=\{B\}$ with $B$ of type ${1\choose 1}$ and $\psi(A)=[\downarrow^x_w B^y_z]^{x,y}_{z,w}$.
In diagrams this means that

\centerline{\includegraphics[height=.8in]{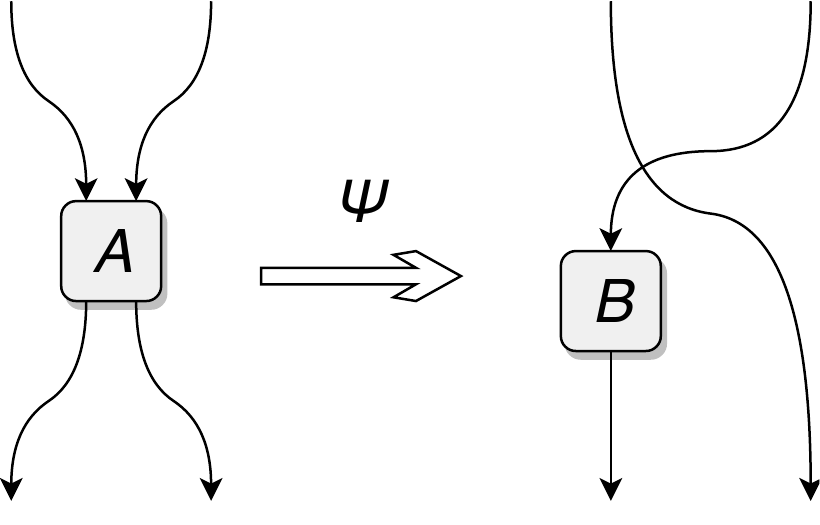}}

So we get  $\Psi([A^{x,y}_{x,z} A^{v,w}_{y,v}]^w_z)=[\downarrow^x_zB^y_x \downarrow^v_vB^w_y]^w_z=[B^y_xB^w_y\downarrow^v_v]^w_z$ and in diagrams:

\centerline{\includegraphics[height=1.4in]{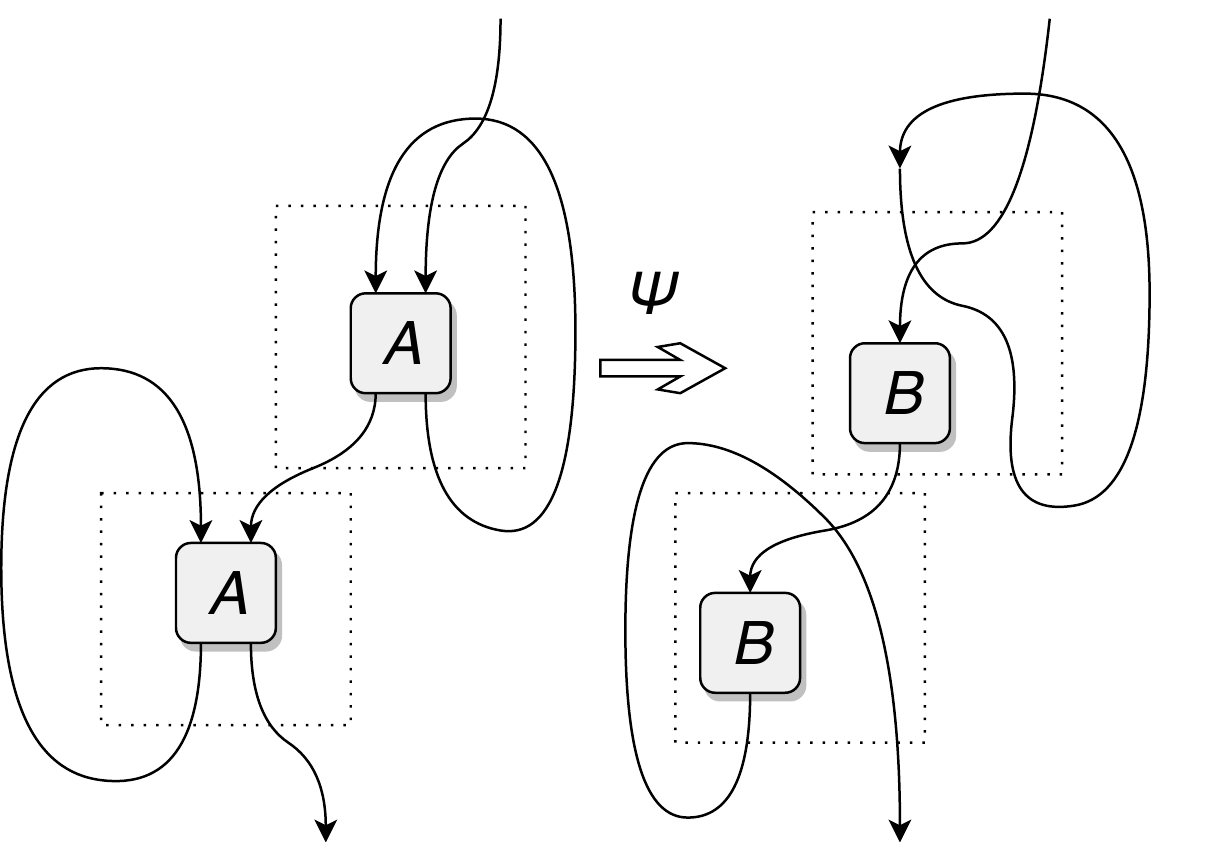}}

Since the monomials form a basis of ${\mathcal Z}\langle {\mathscr G}\rangle$,  $\psi$ extends to a $K$-linear map $\psi:{\mathcal Z}\langle{\mathscr G}\rangle\to {\mathcal Z}\langle {\mathscr H}\rangle$. It is easy to verify that $\psi$ is a homomorphism of wheeled PROPs.

If $\psi(A)$ is not a monomial for every generator $A\in {\mathscr G}$ then we define $\psi([M]^{x_1,\dots,x_p}_{y_1,\dots,y_p})$ by viewing it as a multi-linear expression
in the atoms that appear in $[M]^{x_1,\dots,x_p}_{y_1,\dots,y_p}$. For example, suppose that ${\mathcal G}=\{A\}$ with $A$ of type ${2\choose 2}$,
${\mathcal H}=\emptyset$ and $\psi(A)=2[\downarrow^x_z\downarrow^y_w]^{x,y}_{z,w}-[\downarrow^x_w\downarrow^y_z]^{x,y}_{z,w}$. So we have

\centerline{\includegraphics[height=.8in]{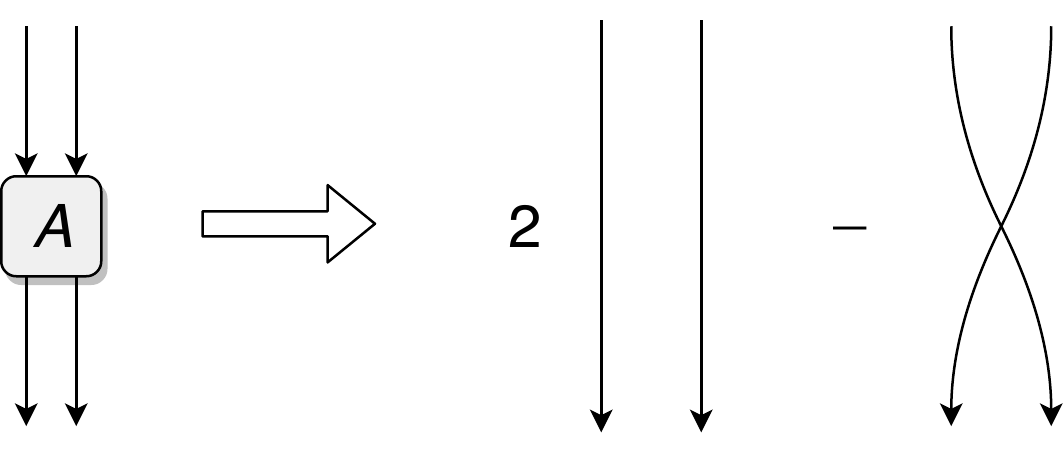}}

Using multilinearity, we get 
\begin{multline*}
$$\psi([ A^{x,y}_{z,w} A^{w,t}_{u,v}]^{x,y,t}_{z,u,v} )=[(2\downarrow^x_z\downarrow^y_w-\downarrow^x_w\downarrow^y_z)(2\downarrow^w_u\downarrow^t_v-\downarrow^w_v\downarrow^t_u)]^{x,y,t}_{z,u,v}=\\=
4[\downarrow^x_z\downarrow^y_w\downarrow^w_u\downarrow^t_v]^{x,y,t}_{z,u,v}-2[\downarrow^x_z\downarrow^y_w\downarrow^w_v\downarrow^t_u]^{x,y,t}_{z,u,v}-2[\downarrow^x_w\downarrow^y_z\downarrow^w_u\downarrow^t_v]^{x,y,t}_{z,u,v}+[\downarrow^x_w\downarrow^y_z\downarrow^w_v\downarrow^t_u]^{x,y,t}_{z,u,v}=\\
4[\downarrow^x_z\downarrow^y_u\downarrow^t_v]^{x,y,t}_{z,u,v}-2[\downarrow^x_z\downarrow^y_v\downarrow^t_u]^{x,y,t}_{z,u,v}-2[\downarrow^x_u\downarrow^y_z\downarrow^t_v]^{x,y,t}_{z,u,v}+[\downarrow^x_v\downarrow^y_z\downarrow^t_u]^{x,y,t}_{z,u,v}.
\end{multline*}
Using the linearity of $\partial^i_j$ and the bi-linearity of $\otimes$ it is easy to verify that  extending $\psi$ to a linear map gives a homomorphism of wheeled PROPs.

\end{proof}
\subsection{The initial object $\ZZ$}
From the universal property it is clear that the free wheeled PROP  $\ZZ=\ZZ\langle \emptyset \rangle$ is the initial object in the category of
wheeled PROPs (and also in the category of  pre-wheeled PROPs). This means that for every wheeled PROP ${\mathcal P}$ there exists a unique homorphism of wheeled PROPs $\psi:\ZZ\to{\mathcal P}$.

 If there are no generators, the only atoms are of the form $\downarrow^x_y$.
So all the molecules, up to equivalence, are of the form
$$
\downarrow^{x_1}_{y_1}\downarrow^{x_2}_{y_2}\cdots \downarrow^{x_p}_{y_p} \downarrow^{z_1}_{z_1}\downarrow^{z_2}_{z_2}\cdots \downarrow^{z_r}_{z_r},
$$
where $x_1,x_2,\dots,x_p,y_1,y_2,\dots,y_p,z_1,z_2,\dots,z_r$ are distinct variables. Now $[\downarrow^z_z]$ corresponds to the loop diagram $\circlearrowright$.
We set $t=[\downarrow^z_z]\in \ZZ^0_0$,
and for $\sigma\in \Sigma_p$ we define 
$$
[\sigma]=[\downarrow^{x_1}_{y_{\sigma(1)}}\downarrow^{x_2}_{y_{\sigma(2)}}\cdots \downarrow^{x_p}_{y_{\sigma(p)}}]^{x_1,x_2,\dots,x_p}_{y_1,y_2,\dots,y_p}.
$$
From the description of the molecules follows 
\begin{corollary}
If $p\neq q$ then ${\mathcal Z}^p_q=0$, and  ${\mathcal Z}^p_p$ is the free $K[t]$-module with basis $[\sigma]$, $\sigma\in \Sigma_p$.
In particular ${\mathcal Z}^0_0=K[t]$. 
\end{corollary}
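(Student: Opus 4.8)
The plan is to read everything off the description of generator-free molecules recorded just above the statement, using that, by construction, the monomials with $p$ inputs and $q$ outputs form a $K$-basis of $\mathcal{Z}^p_q=\mathcal{Z}\langle\emptyset\rangle^p_q$.

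I would first handle the vanishing and identify $\mathcal{Z}^0_0$. In the canonical molecule $\downarrow^{x_1}_{y_1}\cdots\downarrow^{x_p}_{y_p}\downarrow^{z_1}_{z_1}\cdots\downarrow^{z_r}_{z_r}$ the variables $x_1,\dots,x_p$ are precisely the input variables, $y_1,\dots,y_p$ precisely the output variables, and $z_1,\dots,z_r$ are bound, so every molecule, and hence every monomial, has equally many inputs and outputs; this gives $\mathcal{Z}^p_q=0$ when $p\neq q$. For $p=q=0$ a monomial is a disjoint union of $r$ loops, i.e.\ $t^{\otimes r}$, and since $t^{\otimes a}\otimes t^{\otimes b}=t^{\otimes(a+b)}$ the monomials $1,t,t^2,\dots$ form a $K$-basis of $\mathcal{Z}^0_0$ with polynomial multiplication; thus $\mathcal{Z}^0_0=K[t]$, and the tensor product $\mathcal{Z}^0_0\times\mathcal{Z}^p_p\to\mathcal{Z}^p_p$ makes each $\mathcal{Z}^p_p$ a $K[t]$-module.

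For the main claim, fix $p=q$ and exhibit a bijection between the monomials of $\mathcal{Z}^p_p$ and $\Sigma_p\times\Z_{\geq 0}$. Given a monomial, list its input variables in the chosen order as $x_1<\cdots<x_p$ and its output variables as $y_1<\cdots<y_p$; in a canonical representative the $p$ non-loop arrows match $x_i$ to some $y_{\sigma(i)}$, defining $\sigma\in\Sigma_p$, and $r$ is the number of loops, so the monomial is $t^r[\sigma]$, while conversely $(\sigma,r)\mapsto t^r[\sigma]$ is a monomial. Surjectivity of monomial $\mapsto(\sigma,r)$ is exactly the existence of the canonical form; the only point needing an argument is that this map is well defined, and granting that, it is also injective, because a monomial with a prescribed pair $(\sigma,r)$ necessarily has canonical form $\downarrow^{x_1}_{y_{\sigma(1)}}\cdots\downarrow^{x_p}_{y_{\sigma(p)}}\downarrow^{z_1}_{z_1}\cdots\downarrow^{z_r}_{z_r}$. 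To see well-definedness I would view a generator-free molecule as a finite directed graph whose vertices are its variables and whose edges are its atoms, an atom $\downarrow^{x}_{y}$ giving an edge from $x$ to $y$. Each vertex then has in-degree and out-degree at most one, so the graph is a disjoint union of directed paths and directed cycles; the input variables are the sources of the paths, the output variables their sinks, $\sigma$ is the induced source-to-sink bijection, and $r$ is the number of cycles. Of the generating moves of molecule equivalence, (2) and (3) never apply as $\mathscr{G}=\emptyset$, move (1) splices out a bound vertex lying on a path or cycle, and move (4) relabels interior path-vertices and cycle-vertices; each of these plainly leaves the number of cycles and the source-to-sink bijection unchanged, so $(\sigma,r)$ is an invariant of the equivalence class.

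Finally I would put it together: the monomials are a $K$-basis of $\mathcal{Z}^p_p$ and they are exactly the elements $t^r[\sigma]$ with $\sigma\in\Sigma_p$, $r\geq 0$, which is precisely the assertion that $\{[\sigma]:\sigma\in\Sigma_p\}$ is a free $K[t]$-basis of $\mathcal{Z}^p_p$; the case $p=0$, with $\Sigma_0=\{\id\}$ and $[\id]=1$, recovers $\mathcal{Z}^0_0=K[t]$. The invariance check in the third paragraph is the only real obstacle, and it is a short combinatorial verification. Alternatively, the $K[t]$-linear independence of the $[\sigma]$ can be obtained without examining the equivalence relation at all, by applying the unique homomorphism $\mathcal{Z}\to\mathcal{V}$ with $\dim V=n$ large: there $t\mapsto n=\Tr(\id)$ and $[\sigma]$ goes to the permutation operator on $V^{\otimes p}$, and since these operators are $K$-linearly independent once $n\geq p$, a nonzero $K[t]$-relation among the $[\sigma]$ would force a nonzero polynomial to vanish at all large integers.
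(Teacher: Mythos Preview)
Your proposal is correct and follows the paper's approach: the paper states the corollary as an immediate consequence of the canonical form of molecules in $\mathcal{Z}=\mathcal{Z}\langle\emptyset\rangle$ and gives no further argument, while you carefully spell out why that canonical form yields the claimed $K[t]$-basis, including the graph-theoretic check that $(\sigma,r)$ is an invariant of the equivalence relation. Your closing alternative, proving $K[t]$-linear independence of the $[\sigma]$ by evaluating in $\mathcal{V}$ for $\dim V\gg 0$, is a nice extra not present in the paper.
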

\begin{remark}
There is an analogy between the category of wheeled PROPs and the category of commutative rings with identity.
The initial object in the category of commutative rings is the ring of integers ${\mathbb Z}$. For any commutative ring $R$ with identity
there exists a unique ring homomorphism $\psi:{\mathbb Z}\to R$. The kernel is an ideal of ${\mathbb Z}$ of the form $(p)$
where $p$ is a nonnegative integer. If $R$ is an integral domain then $p=0$ or $p$ is prime and $p$ is  the characteristic of $R$.
From this it is clear that the understanding of the ideals of the ring of integers is essential for studying the category of commutative rings with identity in general. Using the analogy, it is clear that studying the ideals and  prime ideals (which we will define later) in the wheeled PROP ${\mathcal Z}$ is fundamental. 
Using the representation theory of symmetric groups, we classify all the ideals of $\ZZ$. 
\end{remark}

\begin{theorem} [Theorem~\ref{ideals} rephrased] \label{informal}
There is a $1-1$ correspondence between ideals in $\ZZ$ and tuples of the form $(f, S)$ where $f \in K[t]$ is a monic polynomial, and $S$ is a finite subset of $\Z_{>0}^2$.
\end{theorem}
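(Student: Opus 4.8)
The plan is to reduce the classification to a purely combinatorial question about families of ideals of $K[t]$ indexed by Young diagrams, and then to resolve that question.

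\emph{Block decomposition of an ideal.} Since $\ZZ^p_q=0$ for $p\neq q$, an ideal $\mathcal J\subseteq\ZZ$ is determined by the subspaces $\mathcal J^p_p\subseteq\ZZ^p_p=K[t][\Sigma_p]$. As noted in the text, $\mathcal J^p_p$ is stable under the $\Sigma_p\times\Sigma_p$-action — which on the group algebra is left and right translation — and under multiplication by $t\in\ZZ^0_0$. In characteristic $0$, Artin--Wedderburn gives $\ZZ^p_p\cong\bigoplus_{\lambda\vdash p}\Mat_{d_\lambda}(K[t])$, and the central idempotents already lie in $K[\Sigma_p]$, so $\mathcal J^p_p$ splits along the blocks; each summand is a two-sided ideal of $\Mat_{d_\lambda}(K[t])$ closed under $t$, hence equal to $\Mat_{d_\lambda}(I_{p,\lambda})$ for a unique ideal $I_{p,\lambda}=(g_{p,\lambda})\trianglelefteq K[t]$. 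So an ideal of $\ZZ$ is exactly a family $(g_{p,\lambda})$ of monic polynomials (allowing $g=0$) subject to the constraints coming from closure under $\otimes$ and $\partial^i_j$.

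\emph{The two constraints.} Tensoring the $\lambda$-block with an idempotent $\tilde e_\mu$ and applying the Littlewood--Richardson rule forces $I_{|\nu|,\nu}\supseteq I_{|\lambda|,\lambda}$ whenever $c^\nu_{\lambda\mu}\neq 0$; taking $\mu=(1)$ and iterating yields condition (T): $g_\nu\mid g_\lambda$ whenever $\lambda\subseteq\nu$ in Young's lattice. For contractions the essential input is the content-branching identity for the partial trace of a minimal idempotent, $\partial^i_j(\tilde e_\lambda)=\sum_{\mu=\lambda-\square}\gamma_{\lambda\mu}\,(t+c_{\lambda/\mu})\,\tilde e_\mu$ with $\gamma_{\lambda\mu}\in K^\times$ and $c_{\lambda/\mu}$ the content of the removed box — checked directly for small $\lambda$, and in general the classical Jucys--Murphy/weighted branching formula (equivalently: $\ZZ$ embeds faithfully into $\prod_{n\ge 1}\V^{\GL(V)}_n$, $t\mapsto n$, under which $\partial$ is a partial trace and the identity is the branching of $\GL_n$-projectors). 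Since moreover $\partial^i_j$ carries the $\lambda$-block onto exactly $(t+c_{\lambda/\mu})$ times the $\mu$-block, closure under $\partial$ is equivalent to condition (C): $g_\mu\mid (t+c_{\lambda/\mu})\,g_\lambda$ whenever $\mu=\lambda-\square$. Thus ideals of $\ZZ$ correspond bijectively to families $(g_\lambda)$ satisfying (T) and (C).

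\emph{Solving the combinatorics.} Set $f:=g_\emptyset$, monic. Combining (T) and (C) gives $g_\lambda\mid g_\mu\mid (t+c)g_\lambda$ for $\mu=\lambda-\square_c$, so $g_\mu\in\{g_\lambda,(t+c)g_\lambda\}$: going up Young's lattice, $g$ is unchanged or loses one factor $(t+c)$ when a box of content $c$ is added, and the $(t+c)$-exponent cannot drop below $0$. Fixing a monic irreducible $\pi=t-r$, I would show $v_\pi(g_\lambda)$ depends only on $m_\pi(\lambda):=\#\{\text{boxes of }\lambda\text{ on the diagonal }j-i=-r\}$ — those boxes form an initial segment of that diagonal, so two diagrams with equal $m_\pi$ have an intersection with the same $m_\pi$, and $v_\pi$ is constant along box-additions off the $\pi$-diagonal. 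Hence $v_\pi(g_\lambda)=\max\!\bigl(0,\,v_\pi(f)-\#\{\text{"drop" positions }\le m_\pi(\lambda)\}\bigr)$, so the whole family is encoded by the finite set $S$ of drop boxes; and since $(i,j)\mapsto(j-i,\min(i,j))$ is a bijection $\Z_{>0}^2\to\bigsqcup_{d\in\Z}\{d\}\times\Z_{>0}$ (diagonal, position on it), $S$ becomes a finite subset of $\Z_{>0}^2$. Conversely each $(f,S)$ produces, via the displayed formula, a family satisfying (T) and (C), and one checks the two assignments are mutually inverse (with the routine bookkeeping relating $\deg f$ to the multiplicities of the contents occurring in $S$, and the zero ideal treated separately).

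\emph{The main obstacle.} Everything downstream of the second step is elementary Young-diagram combinatorics; the heart of the matter is the content-branching identity for $\partial^i_j$ — in particular that the $\lambda$-block maps onto precisely $(t+c_{\lambda/\mu})$ times the $\mu$-block, with no extra constant term and no higher power — together with the verification that (T) and (C) are a complete set of relations. This is exactly where the representation theory of the symmetric groups (or, via the faithful embedding into $\prod_n\V^{\GL_n}$, classical $\GL_n$ invariant theory) is indispensable.
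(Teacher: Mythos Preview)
Your proposal is correct and follows essentially the same route as the paper: block decomposition via Artin--Wedderburn, extraction of the two compatibility constraints from $\otimes$ and $\partial$, and then the jump/drop combinatorics showing that a compatible family $(g_\lambda)$ is determined by $(g_\emptyset,S)$ with $S\subset\Z_{>0}^2$ finite. The one place where the paper is more explicit is precisely your ``main obstacle'': instead of invoking Jucys--Murphy or an embedding into $\prod_n\mathcal V^{\GL_n}$, the paper proves the content-branching identity by computing $\partial^n_n(y_T)=(t+i-j)\,y_{T'}$ directly on Young symmetrizers via a four-case analysis with a sign-reversing involution, and then uses $\Sigma_{n-1}\times\Sigma_{n-1}$-equivariance to conclude that $\partial^n_n(J_\lambda)=\bigoplus_{\nu=\lambda-\square_{(i,j)}}(t+i-j)J_\nu$.
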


We will refer to the ideal corresponding to $(f,S)$ by ${\mathcal I}(f,S)$

\subsection{Representations of wheeled PROPs and Lie algebras}
\begin{definition}
A {\em representation} of a wheeled PROP ${\mathcal R}$ is a homomorphism of wheeled PROPs $\psi:{\mathcal R}\to {\mathcal V}$ where
${\mathcal V}$ is the mixed tensor algebra on some finite dimensional vector space $V$.
\end{definition}
When $\psi:{\mathcal R}\to {\mathcal V}$ is a representation then $V$ is called an algebra over ${\mathcal R}$. One can define various wheeled PROPs (or just  PROPs)
that incorporate the axioms of certain types of algebras, for example associative algebras, Lie algebras, Jordan algebras and so forth. As an example, we will see next
that one can used wheeled PROPs to axiomatize {\em semi-simple} Lie algebras.

The structure of Lie algebras can be captured using PROPs or operads. As an illustration of {\em wheeled} PROPs we will also show how
the structure of finite dimensional {\it semisimple} Lie algebras can be captured.
Suppose $V$ is a finite dimensional $K$-vector space, and $[\cdot,\cdot]:V\times V\to V$ is a Lie bracket. We can identify $[\cdot,\cdot]$ with a tensor $L\in V^\star\otimes V^\star\otimes V$. 
The well-known axioms of a Lie algebra are
\begin{enumerate}
\item $[a,b]=-[b,a]$;
\item $[a,[b,c]]+[b,[c,a]]+[c,[a,b]]=0$.
\end{enumerate}
The first axiom translates to $L^{a,b}_c+L^{b,a}_c=0$:

\centerline{\includegraphics[height=.8in]{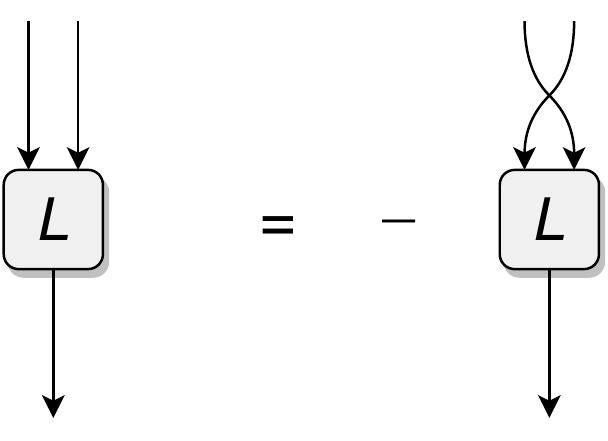}}

 and the second axiom translates to $L^{a,d}_eL^{b,c}_d+L^{b,d}_eL^{c,a}_d+L^{c,d}_eL^{a,b}_d=0$ which is, using (1), equivalent to
 $L^{a,d}_eL^{b,c}_d=L^{a,c}_dL^{b,d}_e+L^{a,b}_dL^{c,d}_e$:

\centerline{\includegraphics[height=1.3in]{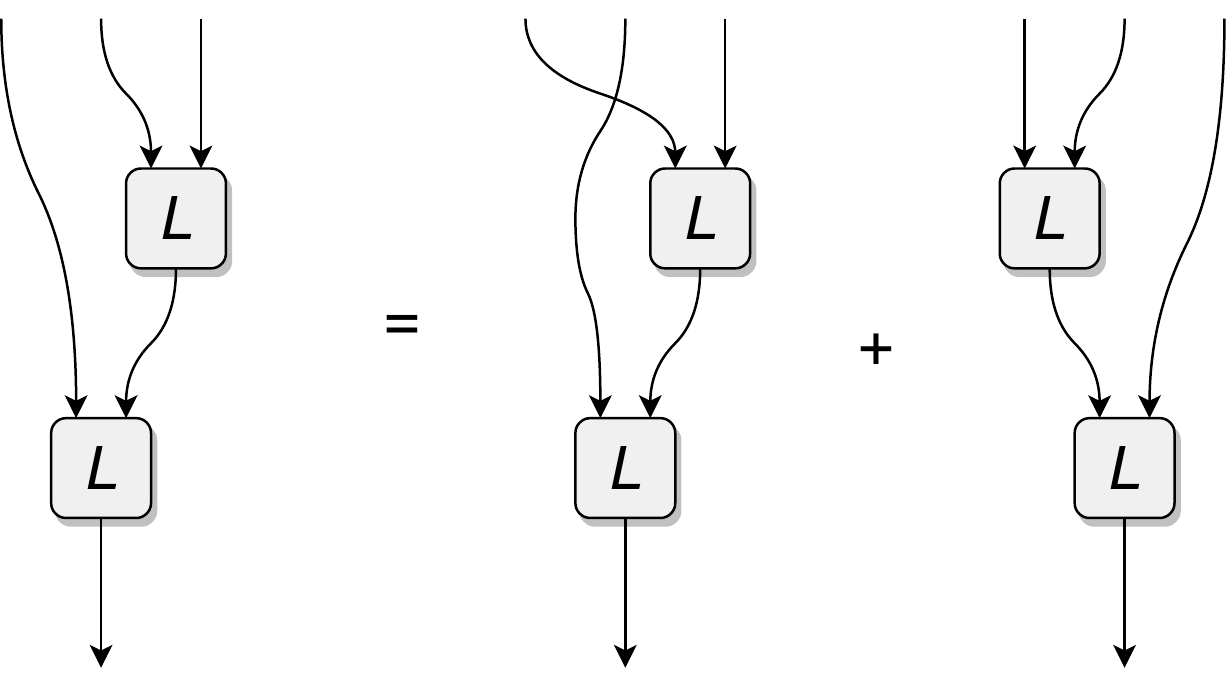}}

The Killing form $\kappa:(a,b)\mapsto \Tr(\ad(a)\ad(b))$ can be viewed as a tensor in $V^\star\otimes V^\star$ and is equal to $\kappa^{a,b}=L^{a,c}_dL^{b,d}_c$.

\centerline{\includegraphics[height=1.2in]{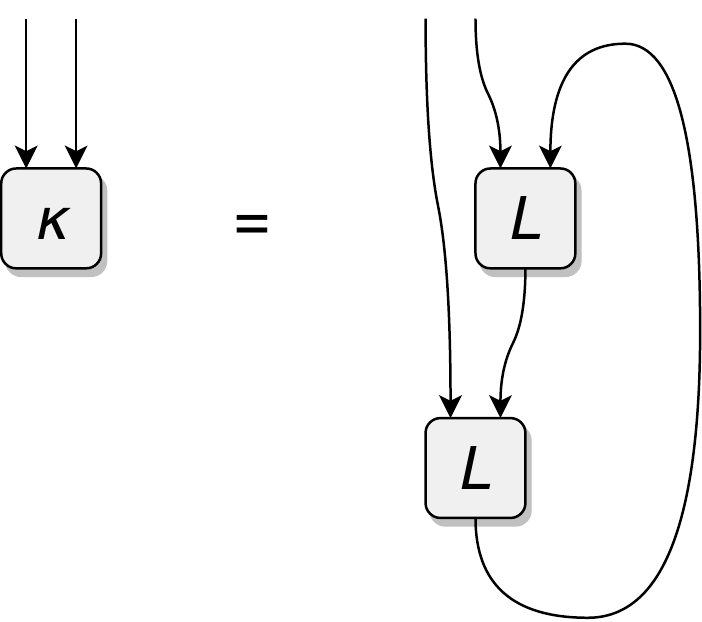}}

The Lie algebra is semi-simple if and only if the Killing form is nondegenerate. This is the case when $\kappa$ as a linear map from $V$ to $V^\star$ has an inverse $C:V^\star\to V$
which we can view as a tensor in $V\otimes V$. The tensor $C\in S^2(V)\subseteq V\otimes V$ can be thought of as the Casimir element.
  Saying that $C$ is the inverse of $\kappa$ translates to $L^{a,c}_dL^{b,d}_cC_{b,e}=\kappa^{a,b}C_{b,e}=\downarrow^a_e$. 
Let ${\mathcal R}={\mathcal Z}\langle L,C\rangle/{\mathcal J}$ be the wheeled PROP generated by $L$ of type ${2\choose 1}$ and $C$ of type ${0\choose 2}$
modulo the ideal ${\mathcal J}$ generated by $L^{a,b}_c+L^{b,a}_c$, $L^{a,d}_eL^{b,c}_d+L^{b,d}_eL^{c,a}_d+L^{c,d}_eL^{a,b}_d$ and $L^{a,c}_dL^{b,d}_cC_{b,e}-\downarrow^a_e$.
Then semi-simple Lie algebra structures on $V$ correspond to homomorphisms of wheeled PROPs from ${\mathcal R}$ to ${\mathcal V}$ (mixed tensor algebra over $V$).

Using the Killing form $\kappa$ and its inverse $C$, we can identify $V$ with $V^\star$ and view the Lie bracket $L\in \VV^2_1$ in $\VV^3_0$. Using the relations one can verify that, as a tensor in $\VV^3_0$, $L$ is alternating.

\section{The classification of ideals in $\ZZ$}\label{sec4}

\subsection{Representations of symmetric groups}
We recall some useful results on the representation theory of symmetric groups. Standard references for this subject are \cite{FHbook, Sagan, Macdonald}. It is well known that the irreducible representations of $\Sigma_n$ are indexed by partitions of $n$. For $\lambda \vdash n$, we denote the corresponding irreducible by $V_{\lambda}$. In particular, $V_{(n)}$ is the trivial representation, and $V_{(1^n)}$ is the sign representation.

We will consider the group algebra $K\Sigma_n$ as a subspace of $K[t]\Sigma_n={\mathcal Z}^n_n$.
The group algebra $\K\Sigma_n$ is a $\Sigma_n \times \Sigma_n$-bimodule. The first copy of $\Sigma_n$ acts by composing permutations on the left, and the second copy acts by composing on the right. The action on the right as stated is a right action, but can be converted to a left action via any anti-automorphism such as the inverse map. From the Artin-Wedderburn Theorem, we get:

\begin{proposition} \label{AW}
We have an decomposition of ideals and $\Sigma_n\times\Sigma_n$ representations:
$$\K\Sigma_n = \bigoplus\limits_{\lambda \vdash n} J_\lambda=\bigoplus\limits_{\lambda \vdash n} V_{\lambda} \otimes \widehat{V}_{\lambda}.
$$
where 
$J_\lambda=V_{\lambda} \otimes \widehat{V}_{\lambda}$ is a simple, two-sided ideal, 
and  $\widehat{V}_{\lambda}$ denotes the irreducible representation of the second copy of $\Sigma_n$.
\end{proposition}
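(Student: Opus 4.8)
The plan is to read this off from the Artin--Wedderburn theorem together with the classical fact that $\Q$ (hence any field of characteristic $0$) is a splitting field for every symmetric group. Since $\kar K = 0$, Maschke's theorem shows that $K\Sigma_n$ is semisimple, so Artin--Wedderburn gives a decomposition $K\Sigma_n \cong \prod_i \Mat_{d_i}(D_i)$ into matrix algebras over division rings $D_i \supseteq K$. Because the irreducible rational representations of $\Sigma_n$ (Specht modules, or Young's natural form) remain irreducible after any base change, each $D_i = K$, and the factors are in bijection with the isomorphism classes of irreducible $K\Sigma_n$-modules, i.e. with the partitions $\lambda \vdash n$; the factor indexed by $\lambda$ is $\End_K(V_\lambda)$, acting on $V_\lambda$ in the usual way. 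Write $J_\lambda \subseteq K\Sigma_n$ for this factor.

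Next I would convert the algebra decomposition into bimodule language. Each $J_\lambda$, being a factor of a direct product of algebras, is a two-sided ideal of $K\Sigma_n$; in particular it is stable under left and right multiplication by group elements, so $K\Sigma_n = \bigoplus_{\lambda \vdash n} J_\lambda$ is a decomposition of $\Sigma_n \times \Sigma_n$-representations. To identify $J_\lambda$ with $V_\lambda \otimes \widehat{V}_\lambda$, observe that left multiplication by $g \in \Sigma_n$ acts on $J_\lambda = \End_K(V_\lambda)$ by post-composition with $\rho_\lambda(g)$, while right multiplication by $h$ acts by pre-composition with $\rho_\lambda(h)$. Under the canonical isomorphism $\End_K(V_\lambda) \cong V_\lambda \otimes V_\lambda^\star$, this is exactly the outer tensor product of the representation $V_\lambda$ of the first copy of $\Sigma_n$ with the contragredient $V_\lambda^\star$ of the second copy; calling the latter $\widehat{V}_\lambda$, we obtain $J_\lambda \cong V_\lambda \otimes \widehat{V}_\lambda$. (For symmetric groups $V_\lambda^\star \cong V_\lambda$, but this is not needed.)

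It remains to check that each $J_\lambda$ is simple as a $\Sigma_n \times \Sigma_n$-representation. Since $K$ splits both copies of $\Sigma_n$, we have $\End_{\Sigma_n}(V_\lambda) = K = \End_{\Sigma_n}(\widehat{V}_\lambda)$, and the standard argument then gives $\End_{\Sigma_n \times \Sigma_n}(V_\lambda \otimes \widehat{V}_\lambda) = K$; since $K[\Sigma_n \times \Sigma_n]$ is semisimple, all its modules are semisimple, and a semisimple module with endomorphism ring a field is irreducible. Hence each $J_\lambda$ is simultaneously a two-sided ideal and a simple bimodule, which is the assertion.

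There is no real obstacle here; the proposition is just a repackaging of Artin--Wedderburn. The only points that need care are the bookkeeping of the left versus right $\Sigma_n$-actions (which introduces the contragredient, hence the hat notation on the second factor) and the explicit use of the splitting-field property of $\Sigma_n$ — both to conclude that the Wedderburn factors are full matrix algebras over $K$ and that the outer tensor products $V_\lambda \otimes \widehat{V}_\lambda$ remain irreducible.
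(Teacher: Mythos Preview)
Your argument is correct and is exactly the approach the paper indicates: the text simply states ``From the Artin--Wedderburn Theorem, we get'' the proposition without further proof, and your write-up supplies the standard details (Maschke, splitting field for $\Sigma_n$, identification of the Wedderburn blocks as bimodules). Nothing more is needed.
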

The representations $J_\lambda$, $\lambda\vdash n$ of  $\Sigma_n\times \Sigma_n$ are irreducible and pairwise non-isomorphic.

For $\lambda \vdash n$, a standard Young  tableaux $T$ of shape $\lambda$ is a filling of the  Young  diagram corresponding to the partition $\lambda$ with the numbers $1,2,\dots,n$, with increasing rows and columns. The picture below is a standard Young  tableaux of shape $(4,3,3,1)$.

\begin{equation} \label{egtab}
\young(1349,267,58\ten,\eleven) 
\end{equation}

For a standard Young tableau $T$, we define an element in the group algebra $K\Sigma_n$ called the Young symmetrizer. The row stabilizer group $R(T)$ is defined as the subgroup of $\Sigma_n$ that fixed the rows of $T$ and the column stabilizer group $C(T)$ is the subgroup that fixes the columns of $T$.

\begin{definition}
For a standard Young tableaux $T$, we define the Young symmetrizer $y_T$ by the following:
\[ y_T = \sum\limits_{\sigma \in R(T), \mu \in C(T)} \sgn(\mu) [\mu\sigma]\in K\Sigma_n \]
\end{definition}

Young  symmetrizers allow us to concretely understand the isomorphism in Proposition~\ref{AW}. The results we use about them can be summarized in the following proposition.

\begin{proposition}
For $\lambda \vdash n$, let $T$ be a standard Young tableaux of shape $\lambda$. Then we have $$K\Sigma_n \cdot (y_T) \cdot \K\Sigma_n = J_\lambda.$$
\end{proposition}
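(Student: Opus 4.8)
The plan is to combine the Artin--Wedderburn decomposition of Proposition~\ref{AW} with the classical ``von Neumann'' properties of Young symmetrizers. Throughout, write $c_\lambda := y_T$ and recall three standard facts (see \cite{FHbook}): (i) $c_\lambda \neq 0$; (ii) $c_\lambda\, x\, c_\lambda \in K c_\lambda$ for every $x \in K\Sigma_n$, and in particular $c_\lambda^2 = n_\lambda c_\lambda$ with $n_\lambda = n!/\dim V_\lambda \neq 0$; (iii) the left ideal $K\Sigma_n\, c_\lambda$ is a simple left $K\Sigma_n$-module isomorphic to $V_\lambda$. Fact (ii) is the combinatorial core: it rests on the standard lemma relating the row and column stabilizers of $T$ (every permutation either lies in $C(T)R(T)$ or is conjugated by a transposition of two entries in a common row of $T$ into a transposition of two entries in a common column of $T$), which produces the sign cancellations forcing $c_\lambda x c_\lambda$ to be a scalar multiple of $c_\lambda$. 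I expect this lemma, and the verification that facts (i)--(iii) follow from it, to be the main obstacle; the remaining steps are formal.

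Granting (i)--(iii), the first step is to locate $c_\lambda$ within the decomposition $K\Sigma_n = \bigoplus_{\mu\vdash n} J_\mu$. Regarding $K\Sigma_n$ as a \emph{left} module, Proposition~\ref{AW} gives $J_\mu \cong V_\mu \otimes \widehat V_\mu \cong V_\mu^{\oplus \dim V_\mu}$, so that $J_\lambda$ is precisely the $V_\lambda$-isotypic component of the left regular representation. By fact (iii) the left submodule $K\Sigma_n\, c_\lambda$ is simple of isomorphism type $V_\lambda$, hence $K\Sigma_n\, c_\lambda \subseteq J_\lambda$; in particular $c_\lambda = 1\cdot c_\lambda \in J_\lambda$.

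Since $J_\lambda$ is a two-sided ideal of $K\Sigma_n$, it follows that the two-sided ideal $K\Sigma_n\, c_\lambda\, K\Sigma_n$ generated by $c_\lambda$ is contained in $J_\lambda$. This ideal is nonzero, as it contains $c_\lambda \neq 0$ (equivalently $c_\lambda^2 = n_\lambda c_\lambda \neq 0$). But by Proposition~\ref{AW} the ring $J_\lambda$ is simple — it is a full matrix algebra $\End_K(V_\lambda)$ — and therefore has no nonzero proper two-sided ideals. Consequently $K\Sigma_n\, c_\lambda\, K\Sigma_n = J_\lambda$, which is the desired identity $K\Sigma_n \cdot (y_T) \cdot K\Sigma_n = J_\lambda$.
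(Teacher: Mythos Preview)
Your argument is correct. Note, however, that the paper does not supply its own proof of this proposition: it is stated as a summary of standard facts about Young symmetrizers, with a pointer to the references \cite{FHbook, Sagan, Macdonald}. Your write-up is essentially the classical textbook proof (as in Fulton--Harris): locate $y_T$ in the $V_\lambda$-isotypic block via the simplicity of $K\Sigma_n\,y_T$, then use the simplicity of the matrix block $J_\lambda$ to force the two-sided ideal generated by $y_T$ to be all of $J_\lambda$. So there is nothing to compare against in the paper itself; you have filled in exactly the argument the cited references provide.
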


We have inclusion maps $\Sigma_{n-1} \hookrightarrow \Sigma_n \hookrightarrow \Sigma_{n+1}$. For an irreducible representation $V_{\lambda}$ of $\Sigma_n$, we have a rule for understand the restriction to $\Sigma_{n-1}$ and a rule for understanding the induced representation for $\Sigma_{n+1}$. In the language of symmetric functions, this is often called Pieri's rule.

\begin{proposition} \label{Ind-Res}
Let $\lambda \vdash n$, then we have
\begin{enumerate}
\item ${\rm Res}_{n-1}^n V_{\lambda} \cong \bigoplus\limits_{\zeta = \lambda - \square} V_{\zeta}$;
\item ${\rm Ind}_{n}^{n+1} V_{\lambda} = K\Sigma_{n+1} \otimes_{K\Sigma_n} V_{\lambda} \cong \bigoplus\limits_{\zeta = \lambda \cup \square} V_{\zeta}$.
\end{enumerate}
The notation $\zeta=\lambda-\square$ (resp. $\zeta=\lambda\cup \square$) means that $\zeta$ runs over all partitions obtained from $\lambda$ by deleting (resp. adding)
a box.
\end{proposition}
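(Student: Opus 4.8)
The plan is to deduce both statements from the classical Pieri rule for Schur functions via the Frobenius characteristic map. For each $n$, let $R(\Sigma_n)$ denote the representation ring of $\Sigma_n$, i.e. the free $\Z$-module on the classes $[V_\lambda]$, $\lambda\vdash n$, and let $\mathrm{ch}_n\colon R(\Sigma_n)\to\Lambda^n$ be the linear isomorphism onto the space of degree-$n$ symmetric functions determined by $\mathrm{ch}_n([V_\lambda])=s_\lambda$, the Schur function. The structural input (see \cite{Macdonald} or \cite{FHbook}) is that $\mathrm{ch}=\bigoplus_n\mathrm{ch}_n$ is a ring isomorphism from $\bigoplus_n R(\Sigma_n)$, with product given by induction from Young subgroups, onto $\Lambda=\bigoplus_n\Lambda^n$ with its usual product; concretely, $\mathrm{ch}_{m+n}\big([\mathrm{Ind}_{\Sigma_m\times\Sigma_n}^{\Sigma_{m+n}}(U\boxtimes W)]\big)=\mathrm{ch}_m([U])\cdot\mathrm{ch}_n([W])$.

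For part (2), take $W$ to be the (trivial) representation of $\Sigma_1$, so $\mathrm{ch}_1([W])=s_{(1)}$, and obtain $\mathrm{ch}_{n+1}([\mathrm{Ind}_n^{n+1}V_\lambda])=s_\lambda\cdot s_{(1)}$. Pieri's rule gives $s_\lambda\, s_{(1)}=\sum_{\zeta=\lambda\cup\square}s_\zeta$, with every coefficient equal to $0$ or $1$. Since $\mathrm{ch}_{n+1}$ is injective and the $s_\zeta$ ($\zeta\vdash n+1$) form a basis, this forces $[\mathrm{Ind}_n^{n+1}V_\lambda]=\sum_{\zeta=\lambda\cup\square}[V_\zeta]$ in $R(\Sigma_{n+1})$; because $\kar K=0$, the group algebra $K\Sigma_{n+1}$ is semisimple, so this equality of classes upgrades to the asserted direct sum decomposition.

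For part (1), compare multiplicities using the character pairing $\langle\,\cdot\,,\,\cdot\,\rangle$ and Frobenius reciprocity: for $\zeta\vdash n-1$, the multiplicity of $V_\zeta$ in $\mathrm{Res}_{n-1}^n V_\lambda$ equals $\langle\mathrm{Res}_{n-1}^n V_\lambda,\,V_\zeta\rangle_{\Sigma_{n-1}}=\langle V_\lambda,\,\mathrm{Ind}_{n-1}^n V_\zeta\rangle_{\Sigma_n}$, which is the multiplicity of $V_\lambda$ in $\mathrm{Ind}_{n-1}^n V_\zeta$. By part (2) (applied with $n-1,\zeta$ in place of $n,\lambda$), this multiplicity is $1$ when $\lambda=\zeta\cup\square$, equivalently $\zeta=\lambda-\square$, and $0$ otherwise. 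Semisimplicity again turns this multiplicity computation into the stated decomposition.

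The only non-formal ingredient is Pieri's rule $s_\lambda s_{(1)}=\sum_{\zeta=\lambda\cup\square}s_\zeta$, and this is where the genuine work sits; it can be proved, for instance, via the dual Cauchy identity, via the Jacobi--Trudi determinant, or combinatorially through the Robinson--Schensted--Knuth correspondence. I would cite it as standard. As an alternative self-contained route one can avoid symmetric functions altogether and work with Specht modules: filter $\mathrm{Res}_{n-1}^n S^\lambda$ according to the position of the entry $n$, identify the successive quotients with the Specht modules $S^\zeta$ for the removable boxes $\zeta=\lambda-\square$ of $\lambda$ (James's argument), split the filtration using char-$0$ semisimplicity to get (1), and then recover (2) from (1) by Frobenius reciprocity exactly as above.
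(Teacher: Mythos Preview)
Your argument is correct and standard. Note, however, that the paper does not supply its own proof of this proposition: it is stated as a background fact from the representation theory of symmetric groups, with the comment that ``in the language of symmetric functions, this is often called Pieri's rule,'' and the reader is referred to \cite{FHbook, Sagan, Macdonald}. So there is nothing in the paper to compare against beyond the citation; your write-up via the Frobenius characteristic and Frobenius reciprocity is exactly the kind of proof one finds in those references, and the alternative Specht-module filtration you sketch is the other standard route (James's branching theorem).
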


We use matrix notation to give coordinates to the boxes in a partition. For example, in the standard Young tableaux given above in $(\ref{egtab})$, $7$ is entered in the box with coordinates $(2,3)$. We say $(i,j) \in \lambda$ if $\lambda$ contains the box with coordinates $(i,j)$, which happens precisely when $\lambda_i \geq j$. We say a box is on diagonal $d$ if $j - i = d$. Hence in the Young tableaux in $(\ref{egtab})$, the box containing $7$ is on diagonal $1$.


\subsection{Preliminary results on $\ZZ$}
In this section, we want to understand how the operations of contraction and tensor interact with the action of $\Sigma_n \times \Sigma_n$ in the wheeled PROP $\ZZ$. In order to do so, we will need to use the results on the representation theory of the symmetric groups that we recalled in the previous section.

We have an isotypic decomposition
$$
\ZZ_n^n = K[t]\Sigma_n= \bigoplus\limits_{\lambda \vdash n} \K[t] \otimes J_{\lambda},
$$
where $J_{\lambda} = V_{\lambda} \otimes \widehat{V}_{\lambda}$. 

\begin{lemma} \label{div1}
For a partition $\lambda\vdash n$ we have in $K\Sigma_{n+1}$ that
$$
K\Sigma_{n+1} \cdot (J_{\lambda} \otimes {\rm id}) \cdot K\Sigma_{n+1} = \bigoplus\limits_{\nu = \lambda \cup \square} J_{\nu} 
$$
\end{lemma}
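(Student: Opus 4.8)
The plan is to relate the left/right ideal generated by $J_\lambda \otimes \id$ inside $K\Sigma_{n+1}$ to an induced representation, and then apply Pieri's rule (Proposition~\ref{Ind-Res}). The key observation is that $J_\lambda \otimes \id \subseteq K\Sigma_n \otimes \id$, and under the embedding $\Sigma_n \hookrightarrow \Sigma_{n+1}$ fixing the last letter, $K\Sigma_n \otimes \id$ is just the image of $K\Sigma_n$ in $K\Sigma_{n+1}$. Since $K\Sigma_{n+1} \cdot (J_\lambda \otimes \id)$ is a left $K\Sigma_{n+1}$-module generated by a right $K\Sigma_n$-submodule, I expect that as a left module it is isomorphic to $K\Sigma_{n+1} \otimes_{K\Sigma_n} J_\lambda = \operatorname{Ind}_n^{n+1} J_\lambda$, viewing $J_\lambda$ here with its left $\Sigma_n$-action. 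By Proposition~\ref{Ind-Res}(2) applied to the left action, $\operatorname{Ind}_n^{n+1} V_\lambda \cong \bigoplus_{\nu = \lambda \cup \square} V_\nu$.

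First I would set up the two-sided picture carefully. The space $K\Sigma_{n+1}$ is a $\Sigma_{n+1} \times \Sigma_{n+1}$-bimodule, hence decomposes as $\bigoplus_{\nu \vdash n+1} J_\nu = \bigoplus_\nu V_\nu \otimes \widehat V_\nu$. The subspace $\mathcal{T} := K\Sigma_{n+1} \cdot (J_\lambda \otimes \id) \cdot K\Sigma_{n+1}$ is a sub-bimodule, so it is a sum of certain $J_\nu$'s; the content of the lemma is exactly which ones occur, namely those $\nu$ obtained from $\lambda$ by adding a box. To identify them, restrict attention to the left action: $\mathcal{T}$ as a left $K\Sigma_{n+1}$-module is $K\Sigma_{n+1}\cdot(J_\lambda\otimes\id)$ already (the right multiplication by $K\Sigma_{n+1}$ only enlarges within each isotypic component for the left action, it does not change which left-irreducibles $V_\nu$ appear). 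So it suffices to compute the multiplicities of each $V_\nu$ in the left module $K\Sigma_{n+1}\cdot(J_\lambda\otimes\id)$.

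Next, the key step: identify $K\Sigma_{n+1}\cdot (J_\lambda \otimes \id)$ with $\operatorname{Ind}_n^{n+1}$ of $J_\lambda$ regarded as a left $\Sigma_n$-module. Since $J_\lambda \otimes \id$ sits inside $K\Sigma_n$ (embedded via the stabilizer of $n+1$) and $K\Sigma_{n+1}$ is free as a right $K\Sigma_n$-module, the natural map $K\Sigma_{n+1} \otimes_{K\Sigma_n} (J_\lambda\otimes\id) \to K\Sigma_{n+1}\cdot(J_\lambda\otimes\id)$ is an isomorphism of left $K\Sigma_{n+1}$-modules. Now as a left $\Sigma_n$-module $J_\lambda \otimes \id = J_\lambda = V_\lambda \otimes \widehat V_\lambda \cong V_\lambda^{\oplus \dim V_\lambda}$ (the right $\widehat V_\lambda$ factor is just a multiplicity space for the left action, $\dim\widehat V_\lambda = \dim V_\lambda$). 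Hence by Proposition~\ref{Ind-Res}(2),
\[
K\Sigma_{n+1}\cdot(J_\lambda\otimes\id) \cong \operatorname{Ind}_n^{n+1}(V_\lambda^{\oplus \dim V_\lambda}) \cong \bigoplus_{\nu = \lambda\cup\square} V_\nu^{\oplus \dim V_\lambda}
\]
as left $\Sigma_{n+1}$-modules. In particular $V_\nu$ occurs in $\mathcal{T}$ precisely when $\nu = \lambda\cup\square$. Since $\mathcal{T}$ is a sub-bimodule and each $J_\nu$ is an irreducible bimodule, $\mathcal{T} = \bigoplus_{\nu=\lambda\cup\square} J_\nu$, which is the claim. (One can double-check the right-hand side is symmetric in the expected way by running the same argument with the roles of left and right swapped, using $\dim\widehat V_\nu = \dim V_\nu$ to match multiplicities, though this is not logically needed.)

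The main obstacle I anticipate is the bookkeeping in the second step — making precise that passing from the bimodule $\mathcal{T}$ to its underlying left module does not lose the information of which $J_\nu$ appear, and that the multiplicity count coming from $\operatorname{Ind}_n^{n+1}$ (which carries the factor $\dim V_\lambda$) is consistent with $\mathcal{T}$ being a genuine sum of whole $J_\nu$'s rather than some smaller left-submodule. This is handled by the observation that $\mathcal{T}$ is automatically a two-sided sub-bimodule (being generated under two-sided multiplication) and that the $J_\nu$ are the minimal two-sided ideals, so the only question is set-membership of $\nu$, which the induction computation settles. The rest is routine.
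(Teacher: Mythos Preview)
Your proof is correct and follows essentially the same approach as the paper, which simply cites Proposition~\ref{Ind-Res}(2); you have supplied the details the paper omits. One minor wording issue: the left ideal $K\Sigma_{n+1}\cdot(J_\lambda\otimes\id)$ is generally a proper subspace of the two-sided ideal $\mathcal{T}$, but your parenthetical correctly notes that only the set of irreducible constituents matters, and the rest of the argument goes through.
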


\begin{proof}
This follows from the second part of Proposition~\ref{Ind-Res}.
\end{proof}

\begin{proposition} \label{div2}
The space $\partial_n^n(J_{\lambda})$ is the direct sum of all $(t+i-j)J_{\nu}$ where
$\nu$ is a partition obtained from $\lambda$ by deleting a box at position $(i,j)$.
\end{proposition}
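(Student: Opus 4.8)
The plan is to analyze the contraction map $\partial_n^n \colon \ZZ^n_n \to \ZZ^{n-1}_{n-1}$ restricted to the simple two-sided ideal $J_\lambda = V_\lambda \otimes \widehat V_\lambda \subseteq K\Sigma_n \subseteq \ZZ^n_n$, and to identify its image as a $K[t]$-submodule of $\ZZ^{n-1}_{n-1} = K[t]\Sigma_{n-1}$. Recall that $\partial_n^n$ is defined on monomials by connecting the $n$-th output to the $n$-th input; on a permutation $[\sigma] \in K\Sigma_n$ this produces either $[\sigma']$ for some $\sigma' \in \Sigma_{n-1}$ (when $\sigma(n) = n$, thinking appropriately about which index is being traced out) or a factor of $t$ times such a term (when the trace closes a loop), so $\partial_n^n$ is genuinely $\Sigma_{n-1} \times \Sigma_{n-1}$-equivariant with values in $K[t]\Sigma_{n-1}$. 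The first step is therefore to set up this equivariance carefully: $\partial_n^n$ intertwines the $\Sigma_{n-1} \times \Sigma_{n-1}$-action (embedding $\Sigma_{n-1} \hookrightarrow \Sigma_n$ in the standard way, fixing $n$), so $\partial_n^n(J_\lambda)$ is a $\Sigma_{n-1} \times \Sigma_{n-1}$-stable $K[t]$-submodule of $K[t]\Sigma_{n-1}$.

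Next I would decompose $J_\lambda$ as a $\Sigma_{n-1} \times \Sigma_{n-1}$-module using branching. By Proposition~\ref{Ind-Res}(1), $\mathrm{Res}^n_{n-1} V_\lambda \cong \bigoplus_{\nu = \lambda - \square} V_\nu$, and similarly for $\widehat V_\lambda$, so
$$
J_\lambda = V_\lambda \otimes \widehat V_\lambda \;\cong\; \bigoplus_{\nu,\, \mu \,=\, \lambda - \square} V_\nu \otimes \widehat V_\mu
$$
as $\Sigma_{n-1} \times \Sigma_{n-1}$-modules. The target $K[t]\Sigma_{n-1} = \bigoplus_{\nu} K[t] \otimes (V_\nu \otimes \widehat V_\nu)$ contains only the "diagonal" isotypic pieces $V_\nu \otimes \widehat V_\nu$ (one copy, free of rank one over $K[t]$), and each $J_\nu$ appears in $K\Sigma_{n-1}$ exactly once. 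By Schur's lemma the equivariant map $\partial_n^n$ must kill every off-diagonal summand $V_\nu \otimes \widehat V_\mu$ with $\nu \neq \mu$, and on each diagonal summand $V_\nu \otimes \widehat V_\nu$ (for $\nu = \lambda - \square$, say deleting the box at position $(i,j)$) it must land in $K[t] \cdot J_\nu$, i.e.\ act as multiplication by some scalar $c_{ij}(t) \in K[t]$ followed by the inclusion $J_\nu \hookrightarrow K[t]\Sigma_{n-1}$. So $\partial_n^n(J_\lambda) = \bigoplus_{\nu = \lambda - \square} c_{ij}(t)\, J_\nu$, and the whole content of the proposition reduces to computing the scalar $c_{ij}(t)$ and showing it equals $t + j - i$ (matching the stated $t + i - j$ up to the paper's diagonal convention $d = j-i$; I would double-check the sign against their conventions).

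The main obstacle — the only real computation — is pinning down $c_{ij}(t)$. The clean way is to evaluate on a Young symmetrizer. Choose a standard tableau $T$ of shape $\lambda$ with the box $(i,j)$ deleted last being the corner $n$, i.e.\ $n$ sits in cell $(i,j)$ and $T$ restricted to $\{1,\dots,n-1\}$ is a standard tableau $T'$ of shape $\nu = \lambda - \{(i,j)\}$. Then $y_T$ generates $J_\lambda$ and one can write $y_T$ in terms of $y_{T'}$ and the permutations moving $n$ around: explicitly, $R(T) = R(T')$ on the symbols $\{1,\dots,n-1\}$ acting by cosets, and similarly for $C(T)$, so $y_T = y_{T'} \cdot \bigl(\sum_{k} \pm (\text{transposition chain from } k \text{ to } n)\bigr)$ appropriately. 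Applying $\partial_n^n$ to this and using that $\partial_n^n([\sigma]) = t[\sigma']$ when $\sigma$ fixes $n$ and $\partial_n^n([\sigma]) = [\sigma']$ otherwise, the sum over the row and column of cell $(i,j)$ collapses: one gets a contribution of $t$ from the identity coset, contributions of $+1$ from the $j-1$ horizontal transpositions within row $i$, and contributions of $-1$ from the $i-1$ vertical transpositions within column $i$, giving $c_{ij}(t) = t + (j-1) - (i-1) = t + j - i$. An alternative, perhaps cleaner, route is the standard fact that the Jucys–Murphy element $X_n = (1\,n) + (2\,n) + \dots + (n-1\,n)$ acts on the $V_\nu$-isotypic component of $\mathrm{Res}^n_{n-1} V_\lambda$ by the scalar equal to the content $j - i$ of the added box $(i,j)$; since $\partial_n^n$ of the appropriate element is exactly $t + X_n$ up to identification, this immediately yields $c_{ij}(t) = t + (j-i)$. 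I would use whichever of these the authors have set up machinery for, and it would remain only to reconcile the sign with their stated formula $t + i - j$ (which suggests their convention for which factor $\partial_n^n$ traces out, or their indexing of $(i,j)$, flips the content sign — a routine bookkeeping check).
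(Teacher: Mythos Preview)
Your approach is essentially the same as the paper's: both use $\Sigma_{n-1}\times\Sigma_{n-1}$-equivariance together with the branching rule to reduce to determining a scalar $c_{ij}(t)$ on each diagonal piece $J_\nu$, and both compute that scalar by applying $\partial_n^n$ to a Young symmetrizer $y_T$ with $n$ in the corner $(i,j)$; the paper carries this out via a four-case analysis of $\partial_n^n[\mu\sigma]$ according to whether $\sigma$ and $\mu$ fix $n$, arriving at $\partial_n^n(y_T)=(t+i-j)y_{T'}$. Your Jucys--Murphy remark is a clean alternative for the scalar but not a different global strategy. Your sign worry is justified: the paper's own worked example ($T$ of shape $(2,1)$ with $n=3$ at $(2,1)$ giving $(t-1)y_{T'}$) and its later use of $t+d$ with $d=j-i$ both indicate the intended scalar is $t+j-i$, so the $i\leftrightarrow j$ swap in the statement and in Cases~2--3 appears to be a typo rather than a convention you are missing.
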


In order to prove the proposition, it will be necessary to do some computations. Let us consider the standard Young tableau of shape $(2,1)$.

$$ 
T =  \young(12,3)$$

Consider the Young  symmetrizer corresponding to the tableaux $y_T = (e - (13))(e + (12))$, i.e., 

\centerline{$y_T=\ \ $\adjustbox{valign=c}{\includegraphics[width=2.5in]{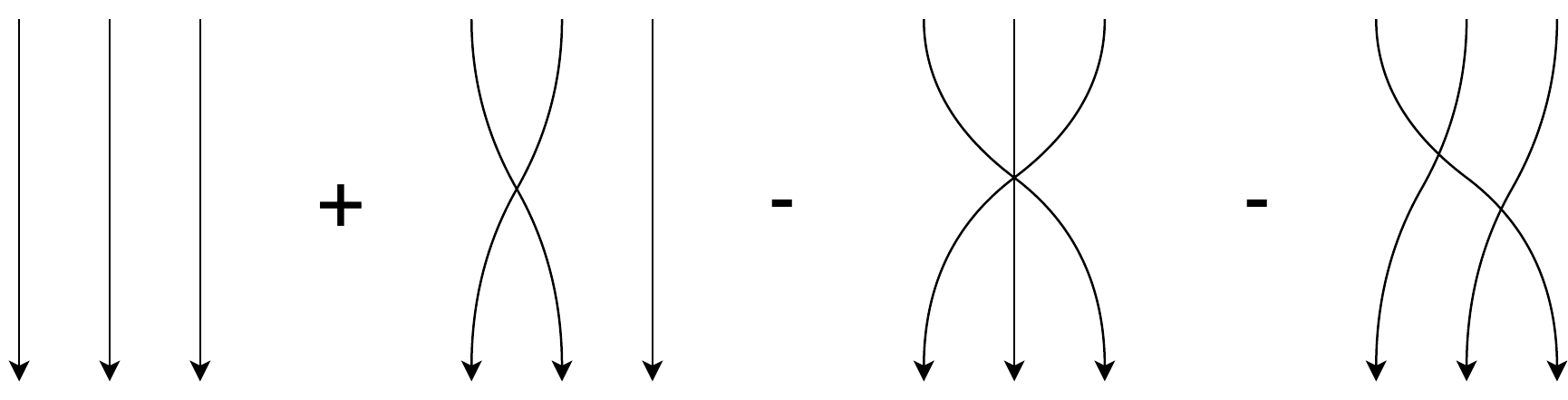}}}

Now, let us apply $\partial_n^n$:

\centerline{$\partial_n^n(y_T)=\ \ $\adjustbox{valign=c}{\includegraphics[width=2.5in]{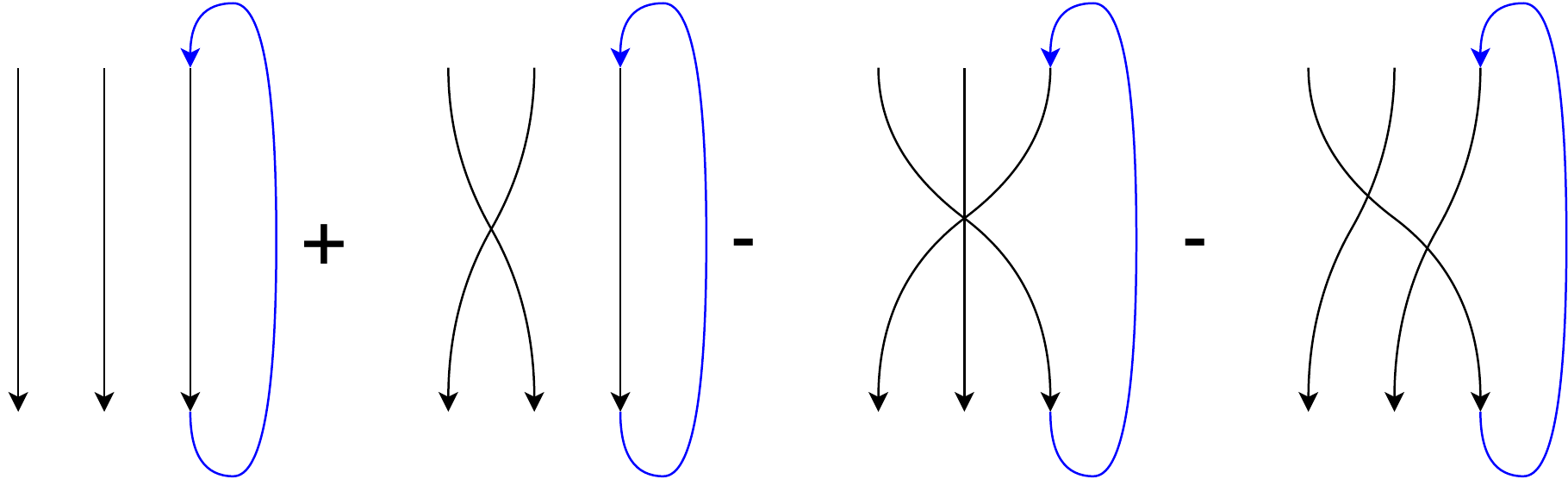}}}

Observe that this is just $(t-1)y_{T'} = (t-1)(e +(12))$, where $T'$ is the tableaux obtained by deleting the box containing $3$. This computation generalizes. In more complicated examples, there is some cancellation as well. We give a complete argument.

\begin{lemma}
Let $T$ be a standard Young tableaux of shape $\lambda$, with $\lambda \vdash n$. Let $T'$ denote the standard Young tableaux obtained by removing the box containing $n$. Then we have $\partial_n^n(y_T) = (t+i-j) y_{T'}$, where $(i,j)$ is the position of the box in the tableau $T$ containing $n$.
\end{lemma}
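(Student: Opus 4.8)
The plan is to compute $\partial_n^n(y_T)$ directly from the definition of the Young symmetrizer, carefully tracking how the contraction $\partial_n^n$ interacts with the row and column stabilizers of $T$. Write $y_T = \sum_{\sigma \in R(T),\ \mu \in C(T)} \sgn(\mu)\,[\mu\sigma]$, and recall that in $\ZZ^n_n = K[t]\Sigma_n$ the operation $\partial_n^n$ applied to a permutation $[\pi]$ connects the $n$-th output to the $n$-th input; the result is $[\pi']$ if $\pi(n)\neq n$ (where $\pi'\in\Sigma_{n-1}$ is obtained by ``splicing out'' the point $n$ from the cycle structure, i.e.\ $\pi'(\pi^{-1}(n)) = \pi(n)$ and $\pi'$ agrees with $\pi$ otherwise), and it is $t\cdot[\pi\big|_{\{1,\dots,n-1\}}]$ if $\pi(n)=n$, since then the edge $n\to n$ closes into a free loop contributing a factor $t$. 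So the first step is to isolate this combinatorial rule for $\partial_n^n$ on the permutation basis — this is essentially the $p=q=n$ case of the molecule description of $\ZZ$ in Section~\ref{sec2}.

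Next I would exploit the hypothesis that $T$ is \emph{standard} and that the box containing $n$ is at position $(i,j)$: since entries increase along rows and down columns, the box $(i,j)$ is a removable corner, so $R(T)$ decomposes as $R(T') \times (\text{permutations of the last entry }n\text{ within row }i)$ — more precisely $R(T)$ is generated by $R(T')$ together with the transpositions swapping $n$ with the other entries of row $i$ — and similarly $C(T)$ is generated by $C(T')$ together with transpositions swapping $n$ with the other entries of column $j$. I would split the double sum over $R(T)\times C(T)$ according to whether the chosen $\sigma$ and $\mu$ move the point $n$ or fix it. The sub-sum where both $\sigma$ and $\mu$ fix $n$ yields exactly $t\cdot y_{T'}$ after applying $\partial_n^n$ (the free loop). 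The terms where $n$ is moved are the source of the $(i-j)$ correction and of the cancellations alluded to in the text.

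For those remaining terms, the key computation is: the contribution to $\partial_n^n(y_T)$ coming from $\mu\sigma$ with $(\mu\sigma)(n)\ne n$. I would organize this by pairing each such $\mu\sigma$ with its "spliced" image in $\Sigma_{n-1}$ and show, using the Pieri-type branching already recorded (Proposition~\ref{Ind-Res}) together with the explicit structure of $R(T),C(T)$, that these collapse to $(j-1)\,y_{T'}$ from the row side (the $j-1$ entries of row $i$ to the left of the corner) minus $(i-1)\,y_{T'}$ from the column side (the $i-1$ entries of column $j$ above the corner, carrying the sign $\sgn(\mu)=-1$), while all genuinely "mixed" terms cancel in pairs. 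Combining, $\partial_n^n(y_T) = \big(t + (j-1) - (i-1)\big)y_{T'}$... — wait, the target is $(t+i-j)$, so the bookkeeping must instead give $t - (j-1) + (i-1)$; I would pin down the correct sign by rechecking against the worked example of shape $(2,1)$ in the text (where $(i,j)=(2,1)$ gives $t+1$, matching the stated $(t-1)$... actually there $n=3$ sits at $(2,1)$ so $t+i-j = t+1$, while the text says $(t-1)y_{T'}$, so in fact the normalization is $\partial_n^n(y_T)=(t+i-j)y_{T'}$ with the example needing $(i,j)$ reread — I would reconcile this discrepancy carefully before finalizing).

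The main obstacle I anticipate is precisely this cancellation argument for the terms moving $n$: showing that the only surviving contributions are the $(i-1)+(j-1)$ "short" transpositions and that everything else pairs off with opposite sign. I expect the cleanest way to do this is to factor $y_T = b_T \cdot a_T$ where $a_T = \sum_{\sigma\in R(T)}[\sigma]$ and $b_T = \sum_{\mu\in C(T)}\sgn(\mu)[\mu]$, use that $a_T = a_{T'}\cdot(1 + (n,c_1) + \dots + (n,c_{j-1}))$ where $c_1,\dots,c_{j-1}$ are the other entries of row $i$ (and dually for $b_T$), and then push $\partial_n^n$ through using its interaction with multiplication by a transposition involving $n$ — reducing everything to a handful of identities in $K[t]\Sigma_{n-1}$. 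The degree-in-$t$ bookkeeping is then automatic: only the all-fixing term produces a loop.
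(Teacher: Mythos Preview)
Your approach is essentially the paper's own proof: both split the double sum over $R(T)\times C(T)$ into four cases according to whether $\sigma$ and $\mu$ fix $n$, obtain $t\cdot y_{T'}$ from the case where both fix $n$, pick up the linear-in-$y_{T'}$ contributions from the two ``one-sided'' cases, and kill the mixed case by a sign-reversing involution (the paper pairs $\mu\sigma$ with $\mu(q\,r)\cdot(p\,r)\sigma$, where $r$ is the entry in the row of $q$ and column of $p$). Your proposed factorization $a_T=a_{T'}(1+\sum_k(n,c_k))$, $b_T=(1+\sum_k(n,d_k))b_{T'}$ is exactly what makes this case split work.

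Your hesitation about the signs is well founded and is not a gap in your reasoning: the paper is internally inconsistent here. In the worked example with $T$ of shape $(2,1)$ the box containing $3$ sits at $(i,j)=(2,1)$, and the computation gives $(t-1)y_{T'}=(t+j-i)y_{T'}$, \emph{not} $(t+i-j)y_{T'}$. Likewise, in the paper's Case~2 the number of entries $l\neq n$ in the same \emph{row} as $n$ is $j-1$ (not $i-1$), and in Case~3 the number of entries in the same \emph{column} is $i-1$ (not $j-1$); the paper has these swapped. Your first instinct, $(j-1)$ from the row side and $-(i-1)$ from the column side, giving $t+j-i$, is the correct bookkeeping, consistent both with the worked example and with the later ``compatible collection'' factor $(t+d)$ where $d=j-i$. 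So do not adjust your argument to match the stated $(t+i-j)$; that is a typo carried through Proposition~\ref{div2} as well.
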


\begin{proof}
Recall that $$y_T = \sum\limits_{\sigma \in R(T), \mu \in C(T)} \sgn(\mu) [\mu\sigma].$$ 
We need to compute $\partial_n^n(y_T)$. There are four types of terms. We will count the contributions of each of these types independently, and then put them together. Before we do that, we compute $\partial^n_n[\mu\sigma]$ according to several cases.\\

\noindent {\bf Case 1:} $\sigma(n) = n$ and $\mu(n) = n$.

There is a natural $1-1$ correspondence between permutations in $R(T')$ and permutations in $R(T)$ that fix $n$. For $\sigma \in R(T)$ fixing $n$, we will denote its image in $R(T')$ under this correspondence by $\sigma'$. This can be better expressed in terms of our notation as $[\sigma'] \otimes \id = [\sigma]$. In pictures, we have

\centerline{{\includegraphics[width=2in]{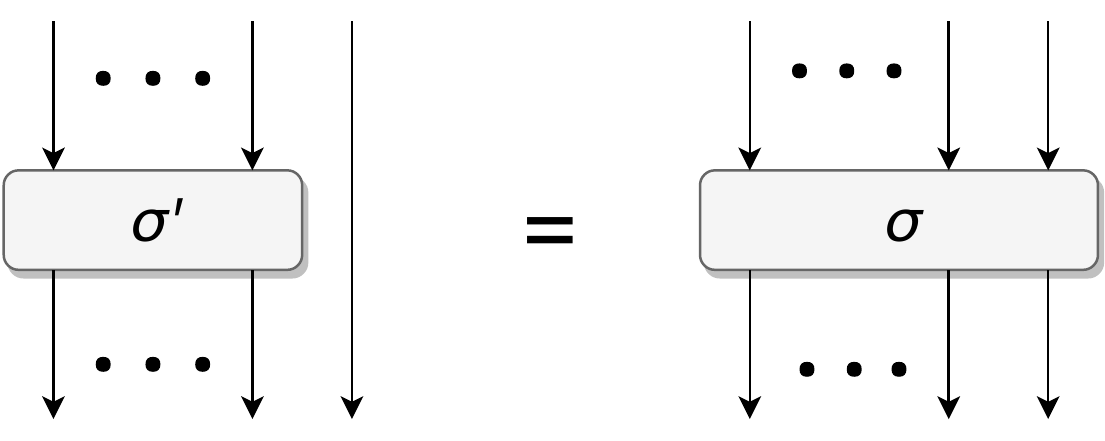}}}

We have a similar correspondence for permutations in $C(T)$ fixing $n$ and permutations in $C(T')$. Since $(\mu\sigma) (n) = n$, when we perform the contraction, we get an exceptional loop, and the rest of the permutation is not disturbed, i.e., $[\mu\sigma] = [\mu'\sigma'] \otimes {\rm id}$. So we have
$$
\partial_n^n [\mu\sigma] =  \partial_n^n ([\mu'\sigma'] \otimes {\rm id})  = t \cdot [\mu'\sigma'] .
$$
In pictures, we have 

\centerline{{\includegraphics[width=2in]{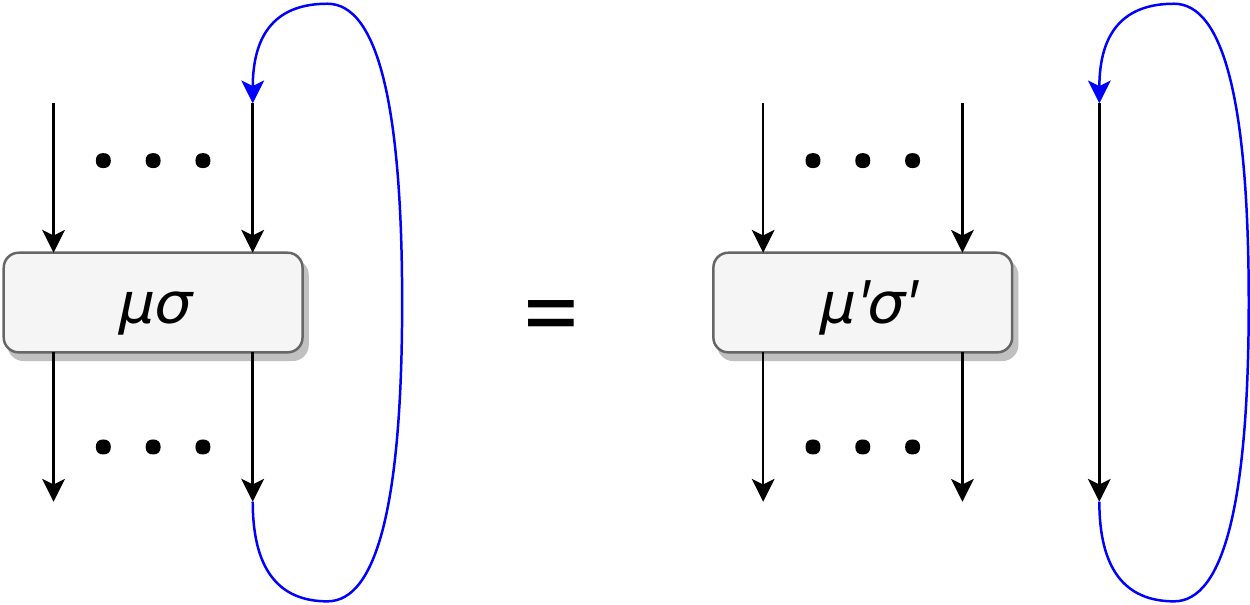}}}

Further, the sign of $\mu$ and $\mu'$ are the same. Thus, the contribution of these terms is $t \cdot y_{T'}$. \\

\noindent {\bf Case 2:} $\sigma(l) = n$ and $\mu(n) = n$ with $l \neq n$.

In order to have $\sigma(l) = n$, we need $l$ to be in the same row as $n$. Now, for each such $l$, there is a $1-1$ correspondence between permutations in $R(T)$ that fix $n$, and permutations in $R(T)$ that send $l$ to $n$, given by $\pi\leftrightarrow \pi\,(l\ n)$.  

So, we can write $\sigma =  \pi\,(l\ n)$ with $\pi \in R(T)$ that fixes $n$. We have $\partial_n^n [\mu\sigma] = \partial_n^n[\mu\pi\,(l\ n)] = [\mu'\pi']$. In pictures, we have

\centerline{$\partial_n^n [\mu\sigma] = \partial_n^n[\mu \pi\,(l\ n)] = \partial_n^n([\mu'\pi'\otimes {\rm id}][(l\ n)])=$\adjustbox{valign=c}{\includegraphics[width=1.5in]{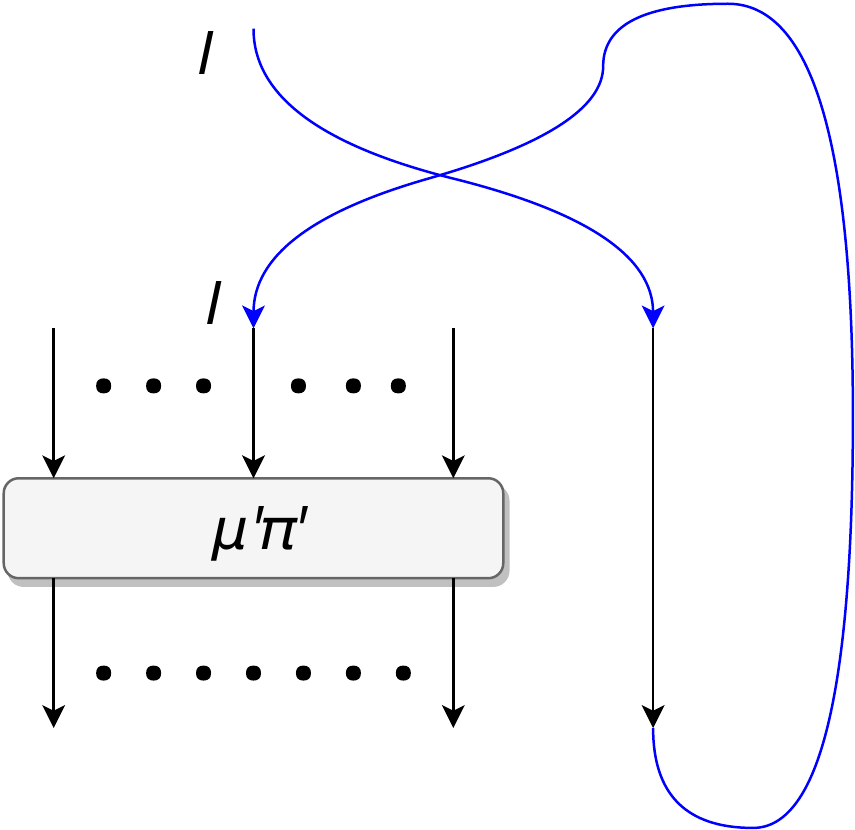}}$\ =[\mu'\pi']$.}

Hence we get a contribution of $y_{T'}$ for each $l\neq n$ in the same row as $n$ in $T$. If $n$ is in position $(i,j)$ of the tableau $T$,
then the contribution is
 is $(i-1) y_{T'}$. \\

\noindent {\bf Case 3:} $\sigma(n) = n$ and $ \mu(n) = k$, with  $k \neq n$.

A similar argument shows that the contribution of these terms $-(j-1) y_{T'}$ if $(i,j)$ is the position of $n$ in the tableau $T$. 
The negative sign comes because the terms $\mu$ and $(k\ n)\,\mu$ have opposite signs. \\

\noindent {\bf Case 4:}  $\sigma(n) = p$ and $\mu(q) = n$ where $p \neq n\neq q$.

The contribution of these terms is $0$. We will give a sign reversing involution of these terms. The number $p$ must be in the same row as $n$, and $q$ must be in the same column as $n$. Let $r$ be the entry in the box that is in the row of $q$ and column of $p$. 
The involution is given by $\mu\sigma \leftrightarrow\widetilde\mu \widetilde\sigma$, where $\widetilde{\sigma} = (p\ r)\,\sigma$, and $\widetilde\mu = \mu\,(q\ r)$. We leave it to the reader to check that this is a sign reversing involution. Further, upon applying $\partial_n^n$, the two terms give the same permutation in $\Sigma_{n-1}$ but with different signs. Thus the contribution of these terms is $0$. \\

Combining the cases above gives $\partial_n^n(y_T) = (t + i-j)(y_T)$.
\end{proof}

\begin{proof} [Proof of Proposition~\ref{div2}]
We have ${\rm Res}^{n}_{n-1} V_{\lambda} = \bigoplus_{\nu = \lambda - \square} V_{\nu}$. Hence 
$${\rm Res}_{n-1}^{n}J_{\lambda} = \bigoplus_{\nu_1=\lambda-\square}\ \bigoplus_{\nu_2=\lambda-\square} V_{\nu_1} \otimes \widehat{V}_{\nu_2}.$$ 
Since $\partial_n^n$ is equivariant with respect to the action of $\Sigma_{n-1} \times \Sigma_{n-1}$, the image of $J_{\lambda}$ can only contain irreducible representations 
of the form $V_{\nu_1} \otimes \widehat{V}_{\nu_2}$ where both $\nu_1$ and $\nu_2$ are obtained by removing a box from $\lambda$,
each with multiplicity at most 1.
On the other hand,  we know $\ZZ^{n-1}_{n-1}$ only contains irreducible representations of the form $V_{\nu} \otimes \widehat{V}_{\nu}$.
So it follows that each of the representations $V_{\nu} \otimes \widehat{V}_{\nu}$ can appear in $\partial_n^n(J_\lambda)$ with
multiplicity at most 1. 

Let $y_{T}$ be the Young symmetrizer corresponding to a standard Young tableau of shape $\lambda$ and suppose that the box of $T$ containing $n$ is in position $(i,j)$. Then we have $\partial_n^n(y_T) = (t+ i-j)y_{T'}$ where $T'$ is the tableau obtained by removing the box containing $n$ as shown in the above lemma. Thus we have $(t+i-j) J_{\nu} \subseteq \partial^n_n(J_\lambda)$, where $\nu$ is the partition obtained by removing the box containing $n$. 

For any $\nu$ that can be obtained from $\lambda$ by removing a box, we can take a standard Young tableau $T'$ of shape $\nu$, and consider a standard Young tableaux $T$ of shape $\lambda$ obtained by putting $n$ in the box that was removed. The above argument then shows that $(t+i-j) J_{\nu}\subseteq \partial^n_n(J_\lambda)$.

We have already seen that each $J_\nu=V_\nu\otimes \widehat{V}_\nu$ can appear in $\partial^n_n(J_\lambda)$ with multiplicity at most 1,
so $\partial^n_n(J_\lambda)$ is the sum of all spaces $(t+i-j)J_\nu$, where $\nu$ is obtained from $\lambda$ by removing a box at position $(i,j)$.
%
%
%
\end{proof}

\subsection{Ideals of $\ZZ$}
The aim of this section is to classify all the ideals of $\ZZ$. Let ${\mathcal I}\subseteq \ZZ$ be an ideal. Clearly, we have ${\mathcal I}^m_n= 0$ for $m \neq n$. For a polynomial $f \in K[t]$, we denote the ideal generated by $f$ by $(f)$.

\begin{lemma}
We have ${\mathcal I}^n_n = \bigoplus\limits_{\lambda \vdash n} (g_{\lambda}) \otimes J_{\lambda}$, where $g_{\lambda}$ is either a monic polynomial or $0$.
\end{lemma}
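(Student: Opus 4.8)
The plan is to analyze the $\Sigma_n\times\Sigma_n$-module structure of ${\mathcal I}^n_n$ and combine it with the two operations that take ${\mathcal I}$ into itself: tensoring with $\id\in\ZZ^1_1$ (which, composed with contractions, realizes both induction from $\Sigma_n$ to $\Sigma_{n+1}$ and its adjoint restriction via $\partial^n_n$). First I would note that ${\mathcal I}^n_n$ is a $K[t]$-submodule of $\ZZ^n_n=K[t]\Sigma_n$ which is stable under left and right multiplication by $\Sigma_n$, hence it is a $K[t]\Sigma_n$-sub-bimodule. Since $K[t]$ is central and $K\Sigma_n=\bigoplus_{\lambda\vdash n}J_\lambda$ is a direct sum of pairwise non-isomorphic simple bimodules $J_\lambda=V_\lambda\otimes\widehat V_\lambda$ (Proposition~\ref{AW}), a standard argument over the PID $K[t]$ shows that every $K[t]\Sigma_n$-sub-bimodule of $K[t]\Sigma_n$ decomposes as $\bigoplus_{\lambda\vdash n}\mathfrak{a}_\lambda\otimes J_\lambda$ where $\mathfrak{a}_\lambda\subseteq K[t]$ is a $K[t]$-submodule, i.e.\ an ideal $(g_\lambda)$ for a unique monic $g_\lambda$ or $\mathfrak{a}_\lambda=0$ (written $g_\lambda=0$). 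This is the content of the lemma; the only real point to verify is that no "off-diagonal" $K[t]$-module can occur, which follows because a $K[t]\Sigma_n$-bimodule homomorphism $J_\lambda\to J_\mu$ vanishes for $\lambda\neq\mu$, so the projection of ${\mathcal I}^n_n$ onto the $J_\lambda$-isotypic component lands in $K[t]\otimes J_\lambda$ and is itself a submodule.

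Concretely I would argue as follows. Write $e_\lambda\in\mathbb{Q}\Sigma_n$ for the central idempotent projecting onto $J_\lambda$ (these are defined over $K$ since $\kar K=0$); then $e_\lambda$ is central in $\ZZ^n_n=K[t]\Sigma_n$ as well, and ${\mathcal I}^n_n=\bigoplus_\lambda e_\lambda\cdot{\mathcal I}^n_n$ with each summand a sub-bimodule of $K[t]e_\lambda = K[t]\otimes J_\lambda$. Fix $\lambda$. Using the Young symmetrizer $y_T$ for a standard tableau $T$ of shape $\lambda$, we have $K\Sigma_n\cdot y_T\cdot K\Sigma_n=J_\lambda$, and more precisely $J_\lambda$ is generated as a bimodule by the single element $y_T$ (up to a nonzero scalar $y_T$ is an idempotent generator of this matrix-algebra block). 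Therefore a sub-bimodule $\mathfrak{b}\subseteq K[t]\otimes J_\lambda$ is determined by the set $\mathfrak{a}_\lambda=\{h\in K[t] : h\,y_T\in\mathfrak{b}\}$: indeed $\mathfrak{b}=K[t]\Sigma_n\cdot(\mathfrak{a}_\lambda\, y_T)\cdot K\Sigma_n$, because any element of $K[t]\otimes J_\lambda$ is a $K[t]$-combination of bimodule-translates of $y_T$, and conversely $\mathfrak{a}_\lambda$ is visibly a $K[t]$-submodule of $K[t]$, hence equals $(g_\lambda)$ for $g_\lambda$ monic or $g_\lambda=0$.

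The main obstacle — really the only place one must be careful — is the claim that the isotypic projection of an arbitrary ideal ${\mathcal I}^n_n$ is again a sub-bimodule of the shape $\mathfrak{a}_\lambda\otimes J_\lambda$ rather than something twisted by the $K[t]$-action; this is exactly where one uses that $t$ is central and that the simple bimodules $J_\lambda$ are pairwise non-isomorphic, so $\mathrm{Hom}_{\Sigma_n\times\Sigma_n}(J_\lambda,J_\mu)=0$ for $\lambda\neq\mu$ and $\mathrm{End}_{\Sigma_n\times\Sigma_n}(J_\lambda)=K$. Granting that, the decomposition ${\mathcal I}^n_n=\bigoplus_{\lambda\vdash n}(g_\lambda)\otimes J_\lambda$ follows, and uniqueness of each monic $g_\lambda$ is immediate from uniqueness of the monic generator of an ideal of $K[t]$. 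I would remark that the interaction of the $g_\lambda$'s across different $n$ (forced by $\partial^n_n$ via Proposition~\ref{div2} and by $-\otimes\id$ via Lemma~\ref{div1}) is not needed for this lemma and is postponed to the subsequent classification of ideals.
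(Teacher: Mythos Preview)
Your argument is correct and follows essentially the same approach as the paper: both observe that ${\mathcal I}^n_n$ is a $\Sigma_n\times\Sigma_n$-stable $K[t]$-submodule of $K[t]\Sigma_n$ and then use the isotypic decomposition $K[t]\Sigma_n=\bigoplus_\lambda K[t]\otimes J_\lambda$ to extract an ideal $(g_\lambda)\subseteq K[t]$ from each block. Your write-up is considerably more detailed than the paper's (which dispatches the lemma in three sentences), and your opening remark about the operations $-\otimes\id$ and $\partial^n_n$ is a red herring you yourself correctly set aside at the end---those operations govern the compatibility of the $g_\lambda$ across different $n$ and belong to the later classification, not to this lemma.
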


\begin{proof}
The space ${\mathcal I}^n_n$ is a $\Sigma_n \times \Sigma_n$-stable subset of $\ZZ_n^n = \K[t][\Sigma_n]$. Further ${\mathcal I}^n_n$ is stable under multiplication by $\ZZ^0_0 = K[t]$. Since $\ZZ_n^n = \bigoplus\limits_{\lambda \vdash n} \K[t] \otimes J_{\lambda}$, we have  ${\mathcal I}^n_n = \bigoplus\limits_{\lambda \vdash n} L_{\lambda} \otimes J_{\lambda}$, where $L_{\lambda}$ is an ideal of $K[t]$. 
If $L_\lambda\neq 0$ then there is a unique monic polynomial $g_{\lambda}$ such that $L_{\lambda} = (g_{\lambda})$. 
\end{proof}

We let $\emptyset$ denote the empty partition. By definition $g_\emptyset$ generates ${\mathcal I}^0_0\subseteq K[t]={\mathcal Z}^0_0$ and $g_\emptyset=0$ or $g_\emptyset$ is a monic polynomial.

\begin{lemma} \label{00not0}
Let ${\mathcal I}$ be a non-zero ideal of $\ZZ$, then $g_{\emptyset} \neq 0$. 
\end{lemma}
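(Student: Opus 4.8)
The claim is that if $\mathcal{I}$ is a non-zero ideal of $\mathcal{Z}$, then $g_\emptyset \neq 0$, i.e., $\mathcal{I}^0_0 \neq 0$. The strategy is to take any non-zero element of $\mathcal{I}$ and, by applying contractions, produce a non-zero element of $\mathcal{I}^0_0 = \mathcal{I} \cap K[t]$. Since $\mathcal{I}^m_n = 0$ for $m \neq n$, a non-zero element of $\mathcal{I}$ lies in some $\mathcal{I}^n_n$ with $n \geq 1$ (the case $n=0$ being trivial). By the previous lemma, $\mathcal{I}^n_n = \bigoplus_{\lambda \vdash n} (g_\lambda) \otimes J_\lambda$, so there is some $\lambda \vdash n$ with $g_\lambda \neq 0$; equivalently, $\mathcal{I}^n_n$ contains a non-zero submodule of the form $(g_\lambda) \otimes J_\lambda$, hence it contains $h(t) y_T$ for some non-zero $h \in K[t]$ and some standard Young tableau $T$ of shape $\lambda$.

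\textbf{Key steps.} First I would reduce to the statement: if $\mathcal{I}^n_n \neq 0$ for some $n \geq 1$, then $\mathcal{I}^{n-1}_{n-1} \neq 0$; iterating this $n$ times gives $\mathcal{I}^0_0 \neq 0$, hence $g_\emptyset \neq 0$. To prove the inductive step, take $\lambda \vdash n$ with $g_\lambda \neq 0$ and pick a standard Young tableau $T$ of shape $\lambda$, so that $g_\lambda(t) y_T \in \mathcal{I}^n_n$. Apply the contraction $\partial^n_n$: by the computational lemma proved just above, $\partial^n_n(y_T) = (t + i - j) y_{T'}$ where $(i,j)$ is the position of the box of $T$ containing $n$ and $T'$ is the tableau obtained by deleting that box. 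Therefore $\partial^n_n(g_\lambda(t) y_T) = g_\lambda(t)(t+i-j) y_{T'} \in \mathcal{I}^{n-1}_{n-1}$ since $\mathcal{I}$ is an ideal and $\partial^n_n$ maps $\mathcal{I}^n_n$ into $\mathcal{I}^{n-1}_{n-1}$. This element is non-zero: $g_\lambda(t)(t+i-j)$ is a non-zero polynomial (as $K[t]$ is a domain) and $y_{T'} \neq 0$ since $T'$ is a standard Young tableau of the partition $\lambda - \square$, whose Young symmetrizer is a non-zero idempotent (up to scalar) generating the non-zero ideal $J_{\lambda - \square}$. Hence $\mathcal{I}^{n-1}_{n-1} \neq 0$, completing the induction.

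\textbf{Alternative phrasing and the main obstacle.} One could also phrase this without explicit induction: starting from a non-zero element of $\mathcal{I}^n_n$, repeatedly apply $\partial$ (choosing at each stage a corner box to remove, so that the tableau shrinks while remaining standard) until we land in $\mathcal{I}^0_0$, tracking that the polynomial coefficient stays non-zero because we only ever multiply by factors of the form $(t + i - j)$, and $K[t]$ has no zero-divisors. The main obstacle — really a bookkeeping point rather than a genuine difficulty — is ensuring that after applying $\partial^n_n$ we still have a Young-symmetrizer-type element and not something that collapses to zero. This is handled entirely by the computational lemma already established: the key content is that the contraction of $y_T$ is $(t+i-j) y_{T'}$ and not $0$, and that $y_{T'}$ is itself non-zero because its shape $\lambda - \square$ is a genuine partition obtained by deleting a corner. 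No deeper representation-theoretic input is needed beyond what Proposition~\ref{div2} and the preceding lemma already give.
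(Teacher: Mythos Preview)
Your proof is correct, but it takes a different route from the paper's. The paper gives a direct, elementary argument: starting from any nonzero $a=\sum_{\sigma\in\Sigma_d} f_\sigma[\sigma]\in\mathcal I^d_d$, one arranges (by left-multiplying by a suitable $[\mu^{-1}]$) that the coefficient of $[\id]$ is nonzero, and then applies the full contraction $\partial^1_1\cdots\partial^d_d$, using only that $\partial^1_1\cdots\partial^d_d([\sigma])=t^{c(\sigma)}$ with $c(\id)=d$ strictly maximal, to land on a nonzero element of $\mathcal I^0_0$. No Young symmetrizers or isotypic decomposition are needed.

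Your argument instead feeds the structural results of Section~4.2 back into the lemma: you pick a component $(g_\lambda)\otimes J_\lambda$ with $g_\lambda\neq 0$, take $g_\lambda(t)y_T$, and use the formula $\partial^n_n(y_T)=(t+i-j)y_{T'}$ to step down one level at a time, the nonvanishing being guaranteed because $K[t]$ is a domain and $y_{T'}\neq 0$. This is perfectly valid and logically available at this point in the paper, and it has the virtue of showing concretely how the polynomial $g_\emptyset$ picks up the linear factors $(t+i-j)$ as you contract down---foreshadowing the later classification. The trade-off is that you are invoking heavier machinery (the Young symmetrizer contraction formula and the isotypic decomposition of $\mathcal I^n_n$) for a statement the paper proves with a one-line trace computation; the paper's proof is independent of Proposition~\ref{div2} and would survive even if that proposition were moved later.
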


\begin{proof}
Since ${\mathcal I}$ is a non-zero ideal, we have $0 \neq a \in {\mathcal I}^d_d$ for some $d$. Write $a = \sum_{\sigma \in \Sigma_d} f_{\sigma} [\sigma]$, with $f_{\sigma} \in K[t]$. Without loss of generality, we can assume $f_{\rm id} = 1$. (If not, we can pick $\mu$ such that $f_\mu \neq 0$ and consider instead $\frac{1}{f_\mu}(\mu^{-1} \cdot a) \in {\mathcal I}^d_d$.)

Observe that $\partial_1^1  \partial_2^2 \cdots \partial_n^n (a) = t^n +$ lower order terms, giving a non-zero element in ${\mathcal I}^0_0$.
\end{proof}

\begin{corollary} \label{nonzero}
If ${\mathcal I}$ is a non-zero ideal, we have that $g_{\emptyset} \in (g_{\lambda})$. In particular, 
$g_{\lambda} \neq 0$ for all $\lambda$. 
\end{corollary}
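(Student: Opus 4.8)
The plan is to combine the two preceding lemmas together with the structure of $\partial^n_n$ on the isotypic pieces established in Proposition~\ref{div2}. Fix a non-zero ideal $\mathcal{I}\subseteq\ZZ$. By Lemma~\ref{00not0} we know $g_\emptyset\neq 0$, so $\mathcal{I}^0_0=(g_\emptyset)$ is a non-zero ideal of $K[t]$. The key observation is that, since $\mathcal{I}$ is closed under contractions, for any $\lambda\vdash n$ and any Young symmetrizer $y_T$ of shape $\lambda$ we can iterate contractions $\partial^n_n\partial^{n-1}_{n-1}\cdots\partial^1_1$ to pass from $(g_\lambda)\otimes J_\lambda\subseteq\mathcal{I}^n_n$ down to $\mathcal{I}^0_0$, multiplying $g_\lambda$ by a product of linear factors $(t+i-j)$ along the way. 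First I would run this downward induction: using Proposition~\ref{div2}, $\partial^n_n\bigl((g_\lambda)\otimes J_\lambda\bigr)$ contains $(t+i-j)g_\lambda\cdot J_\nu$ for each box $(i,j)$ one can remove from $\lambda$, with $\nu=\lambda-\square$; since $\mathcal{I}^{n-1}_{n-1}=\bigoplus_\mu (g_\mu)\otimes J_\mu$, this forces $g_\nu\mid (t+i-j)g_\lambda$. Chaining such divisibilities down to the empty partition — choosing at each stage to strip off whatever box is available — yields that $g_\emptyset$ divides a product of the form $g_\lambda\cdot\prod (t+i_k-j_k)$.

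That, however, only gives $g_\emptyset\in\bigl(g_\lambda\cdot\prod(t+i_k-j_k)\bigr)$, not yet $g_\emptyset\in(g_\lambda)$. To remove the extra linear factors I would instead argue in the reverse direction, using that $\mathcal{I}$ is also closed under tensoring (equivalently, under multiplication by $\ZZ^1_1$ and by the identity element $\downarrow$). Concretely, if $f\in(g_\emptyset)=\mathcal{I}^0_0$, then $f\otimes\id\in\mathcal{I}^1_1$, and more generally $f\otimes[\sigma]\in\mathcal{I}^n_n$ for every $\sigma\in\Sigma_n$; hence $f\cdot K\Sigma_n\subseteq\mathcal{I}^n_n$, which means $g_\lambda\mid f$ for every $\lambda\vdash n$. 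In particular $g_\lambda\mid g_\emptyset$ for all $\lambda$, which is exactly the containment $g_\emptyset\in(g_\lambda)$. Since by Lemma~\ref{00not0} we have $g_\emptyset\neq 0$, this also shows $g_\lambda\neq 0$ for every partition $\lambda$, giving the ``in particular'' clause.

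So the actual proof is short: the real content is the single line that $f\in\mathcal{I}^0_0$ implies $f\otimes[\sigma]\in\mathcal{I}^n_n$ by the ideal axiom for $\otimes$, combined with the isotypic description $\mathcal{I}^n_n=\bigoplus_\lambda(g_\lambda)\otimes J_\lambda$ from the earlier lemma and the fact (Proposition~\ref{AW}) that $K\Sigma_n=\bigoplus_\lambda J_\lambda$ spans the whole group algebra. The step I expect to be the only point needing a little care is verifying that $f\otimes[\sigma]$ genuinely lands in $(g_\lambda)\otimes J_\lambda$ after projecting to the $\lambda$-isotypic component — i.e.\ that projecting $f\cdot[\sigma]$ onto $J_\lambda$ still has all its $K[t]$-coefficients divisible by nothing worse than forced, so that membership in $\mathcal{I}^n_n=\bigoplus(g_\lambda)\otimes J_\lambda$ really does yield $g_\lambda\mid f$. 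This follows because $(g_\lambda)\otimes J_\lambda$ is precisely the set of elements of $K[t]\otimes J_\lambda$ whose coefficients lie in $(g_\lambda)$, and $f\cdot(\text{anything in }J_\lambda)$ has coefficients in $(f)$; so $f\cdot J_\lambda\subseteq(g_\lambda)\otimes J_\lambda$ forces $(f)\subseteq(g_\lambda)$, i.e.\ $g_\lambda\mid f$. Taking $f=g_\emptyset$ finishes the argument.
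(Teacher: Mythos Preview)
Your core argument---that $g_\emptyset\in\mathcal{I}^0_0$ gives $g_\emptyset\cdot J_\lambda\subseteq\mathcal{I}^n_n$ by the ideal axiom for $\otimes$, hence $g_\emptyset\in(g_\lambda)$---is correct and is exactly the paper's (one-line) proof. The opening paragraph invoking Proposition~\ref{div2} is an unnecessary detour and also contains a direction slip: the chain of divisibilities $g_\nu\mid(t+i-j)g_\lambda$ yields $g_\lambda\cdot\prod(t+i_k-j_k)\in(g_\emptyset)$, not $g_\emptyset\in\bigl(g_\lambda\cdot\prod(t+i_k-j_k)\bigr)$; and had the latter containment actually held, it would already imply $g_\emptyset\in(g_\lambda)$ since $(g_\lambda\cdot h)\subseteq(g_\lambda)$, so your ``not yet'' would have been ``done.''
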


\begin{proof}
We have $g_{\emptyset} \neq 0$. Thus $g_{\emptyset} \cdot J_{\lambda} \in {\mathcal I}$, and hence $g_{\emptyset} \in (g_{\lambda})$.
\end{proof}

\begin{definition}
A collection of monic polynomials $\{q_{\lambda}\}$ indexed by partitions $\lambda$ is called compatible if for any two partitions $\lambda$ and $\mu$ such that $\lambda = \mu \cup \text{one box}$, with the additional box on diagonal $d$, then either $q_{\mu} = q_{\lambda}$ or $q_{\mu} = q_{\lambda} \cdot (t + d)$.
\end{definition}

Given an ideal ${\mathcal I}$ we have ${\mathcal I}^n_n = \bigoplus\limits_{\lambda \vdash n} (g_{\lambda}) \otimes J_{\lambda}$. This gives us a collection of polynomials $\{g_{\lambda}\}$.

\begin{theorem} \label{divisibility}
Given an ideal ${\mathcal I}$, the collection of polynomials $\{g_{\lambda}\}$ is compatible. Conversely, given a compatible collection $q_{\lambda}$, we can define an ideal ${\mathcal J}$ by ${\mathcal J}^n_n = \bigoplus\limits_{\lambda \vdash n} (q_{\lambda}) \otimes J_{\lambda}$.
\end{theorem}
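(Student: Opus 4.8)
The plan is to prove the two implications by tracking how the two generating operations on $\ZZ$ --- tensoring with the identity strand $\id\in\ZZ^1_1$ and applying a single contraction $\partial^n_n$ --- interact with the isotypic decomposition $\ZZ^n_n=\bigoplus_{\lambda\vdash n}K[t]\otimes J_\lambda$. The behaviour of $J_\lambda$ under the first (after closing up under $\Sigma_{n+1}\times\Sigma_{n+1}$) is given by Lemma~\ref{div1}, and under the second by Proposition~\ref{div2}; everything else will be assembled from these two statements, together with the already-noted facts that any ideal is automatically stable under the $\Sigma_p\times\Sigma_q$-action and under multiplication by $\ZZ^0_0=K[t]$.

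For the forward direction I may assume $\mathcal I\neq 0$ (the zero ideal being trivially compatible), so by Corollary~\ref{nonzero} every $g_\lambda$ is a nonzero monic polynomial. Fix $\mu\vdash n$ and $\lambda=\mu\cup\square\vdash n+1$ with the new box at position $(i,j)$; I will establish the two divisibilities $g_\lambda\mid g_\mu$ and $g_\mu\mid(t+i-j)\,g_\lambda$. Since $t+i-j$ is monic of degree one, these together force $g_\mu=g_\lambda$ or $g_\mu=(t+i-j)g_\lambda$, which is precisely the compatibility relation for this pair (the linear factor being the ``$t+d$'' of the definition). For the first divisibility: from $g_\mu\otimes J_\mu\subseteq\mathcal I^n_n$ I tensor on the right with $\id$, close up under $\Sigma_{n+1}\times\Sigma_{n+1}$, and apply Lemma~\ref{div1} to get $(g_\mu)\cdot\bigoplus_{\nu=\mu\cup\square}J_\nu\subseteq\mathcal I^{n+1}_{n+1}=\bigoplus_\nu(g_\nu)\otimes J_\nu$; comparing $J_\lambda$-isotypic components gives $(g_\mu)\subseteq(g_\lambda)$. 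For the second: from $g_\lambda\otimes J_\lambda\subseteq\mathcal I^{n+1}_{n+1}$ I apply $\partial^{n+1}_{n+1}$, which by $K[t]$-linearity and Proposition~\ref{div2} gives $g_\lambda\cdot\bigoplus_{\nu=\lambda-\square}(t+i_\nu-j_\nu)J_\nu\subseteq\mathcal I^n_n$; comparing the $J_\mu$-component gives $(t+i-j)g_\lambda\in(g_\mu)$.

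For the converse, given a compatible collection $\{q_\lambda\}$, set $\mathcal J^m_n=0$ for $m\neq n$ and $\mathcal J^n_n=\bigoplus_{\lambda\vdash n}(q_\lambda)\otimes J_\lambda\subseteq\ZZ^n_n$. Since each $J_\lambda$ is a two-sided ideal of $K\Sigma_n$ (Proposition~\ref{AW}) and each $(q_\lambda)$ is a $K[t]$-ideal, every $\mathcal J^n_n$ is stable under $\Sigma_n\times\Sigma_n$ and under $K[t]$; it remains to check closure under contractions and under $\otimes$. Closure under contractions reduces to closure under $\partial^n_n$, because any $\partial^i_j$ is $\partial^n_n$ pre- and post-composed with permutations and $\mathcal J$ is permutation-stable; and $\partial^n_n(q_\lambda\otimes J_\lambda)=q_\lambda\cdot\bigoplus_{\nu=\lambda-\square}(t+i_\nu-j_\nu)J_\nu$ by Proposition~\ref{div2}, where for each $\nu$ the compatibility relation for the pair $\nu\subseteq\lambda$ makes $(t+i_\nu-j_\nu)q_\lambda$ lie in $(q_\nu)$. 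Closure under $\otimes$ is the substantive point: since $\ZZ^p_q$ and $\mathcal J^p_q$ both vanish unless $p=q$, one needs $\ZZ^a_a\otimes\mathcal J^b_b\subseteq\mathcal J^{a+b}_{a+b}$ together with the mirror statement; writing $\ZZ^a_a=K[t]\Sigma_a$ and using that $[\sigma]\otimes B$ differs from $\id^{\otimes a}\otimes B$ by a left permutation, while $\mathcal J^{a+b}_{a+b}$ is permutation- and $K[t]$-stable, this collapses to showing $\id^{\otimes a}\otimes B\in\mathcal J$ for $B\in\mathcal J^b_b$, hence --- by an induction that adds one $\id$-strand at a time, switching sides when needed via a block-transposition --- to the single assertion $B\otimes\id\in\mathcal J$ for $B\in\mathcal J$. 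For $B\in(q_\lambda)\otimes J_\lambda$ this follows from Lemma~\ref{div1}, which gives $J_\lambda\otimes\id\subseteq\bigoplus_{\nu=\lambda\cup\square}J_\nu$, so $B\otimes\id\in(q_\lambda)\cdot\bigoplus_\nu J_\nu$, and compatibility forces $q_\nu\mid q_\lambda$ for each such $\nu$, landing the result in $\bigoplus_\nu(q_\nu)\otimes J_\nu\subseteq\mathcal J$.

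I expect the main obstacle to be exactly this $\otimes$-closure in the converse: no single computation is hard, but one has to organize the reduction from ``tensoring with an arbitrary element of $\ZZ$'' down to ``tensoring with one strand $\id$'', and that hinges on the permutation-equivariance packaged into Lemma~\ref{div1} and on the $K[t]$-linearity of the structure maps. A secondary source of care will be keeping the linear factor $t+i-j$ bookkept consistently between the two directions and reconciling it with the ``$t+d$'' appearing in the statement of compatibility.
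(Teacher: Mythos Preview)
Your proposal is correct and follows the same approach as the paper: both proofs reduce the ideal conditions to closure under tensoring by a single strand $\id$ (using that $\ZZ$ is generated by the identity) and under a single contraction $\partial^n_n$, and then read off the compatibility constraint from Lemma~\ref{div1} and Proposition~\ref{div2}. The paper's proof is a one-line appeal to these two results, whereas you have written out the two divisibilities $g_\lambda\mid g_\mu$ and $g_\mu\mid(t+i-j)g_\lambda$ explicitly and organized the converse direction carefully, but the underlying argument is identical.

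One bookkeeping remark, since you flagged it yourself: the linear factor in Proposition~\ref{div2} is stated as $t+i-j$, while the compatibility definition uses $t+d$ with $d=j-i$; the paper's later formula for $g_\lambda$ in terms of $(f,\mathcal C)$ also uses $d_p=j_p-i_p$. If you trace through Cases~2 and~3 of the supporting lemma, the row of $n$ at position $(i,j)$ contributes $j-1$ terms and the column contributes $i-1$, so the factor should indeed be $t+j-i=t+d$; this is a typo in the statement of Proposition~\ref{div2}, not an error in your argument.
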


\begin{proof} 
By definition, a subset ${\mathcal I}= \bigoplus {\mathcal I}^m_n$ is an ideal if and only if each ${\mathcal I}^m_n$ is stable under the action of $\Sigma_m \times \Sigma_n$, and is closed under contractions and the tensor product. In fact, since $\ZZ$ is generated by the identity, it suffices to consider tensoring by ${\rm id}$.  Hence, the theorem follows directly from Lemma~\ref{div1} and Proposition~\ref{div2}.
\end{proof}

Thus it is suffices to classify all the compatible collections of monic polynomials. Given a finite collection of boxes $\{(i_1,j_1),(i_2,j_2) \dots (i_k,j_k)\}$ and a monic polynomial $f$, we define a compatible collection $\{g_{\lambda}\}$ given by $g_\lambda = f \cdot \prod\limits_{p = 1}^k (t + d_ p)^{\epsilon_p}$, where $d_p = j_p - i_p$, and $\epsilon_p = 0$ if $(i_p,j_p) \in \lambda$, and $1$ otherwise. Since the collection of boxes is finite, it is contained in some large enough rectangular partition. Hence, we can pictorially show the collection as a rectangular partition with some boxes shaded in. For example, if we have the collection $\{(1,1), (1,3),(4,2)\}$, this is contained in the rectangular partition $(3,3,3,3)$, and the picture is given as follows:

\[
\begin{tikzpicture} 
    [
        box/.style={rectangle,draw=black,thick, minimum size=0.5cm},
    ]

\foreach \x in {1,2,3}{
    \foreach \y in {-1,-2,-3,-4}
        \node[box] at (\x/2,\y/2){};
}

\node[box,fill=gray] at (1/2,-1/2){};  
\node[box,fill=gray] at (3/2,-1/2){};  
\node[box,fill=gray] at (2/2,-4/2){};  

\end{tikzpicture}
\]

In fact, we will show that all collections of compatible polynomials are of this form. Let $\{g_{\lambda}\}$ be a compatible collection of monic polynomials. We say that $\lambda$ has an $(i,j)$-jump if $(i,j)$ is a removable box, and $g_{\lambda} \neq g_{\lambda - (i,j)}$. The compatibility condition forces $g_{\lambda - (i,j)} = (t+d)g_{\lambda}$ where $d = j-i$.

\begin{lemma}
Suppose $\lambda_1$ and $\lambda_2$ are partitions such that $(i,j)$ is a removable box in both of them. Then either both have an $(i,j)$-jump or neither have an $(i,j)$-jump.
\end{lemma}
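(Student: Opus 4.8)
The statement asserts that having an $(i,j)$-jump is a property that depends only on the box $(i,j)$ and not on the particular partition $\lambda$ in which $(i,j)$ is removable. My plan is to reduce this to a connectedness argument on the poset of partitions admitting $(i,j)$ as a removable box, using the compatibility condition as the propagation mechanism. The key observation is: if $\lambda_1$ and $\lambda_2$ both have $(i,j)$ as a removable box, then I want to walk from $\lambda_1$ to $\lambda_2$ through a sequence of partitions, each still having $(i,j)$ removable, where consecutive partitions differ by a single box in a \emph{different} position (i.e. not the box $(i,j)$ and not the box immediately adjacent to it that would be affected). Along such a step, I will show the jump status at $(i,j)$ is preserved.

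\textbf{Key steps.} First I would establish the single-step invariance: suppose $\mu$ and $\mu'$ both have $(i,j)$ as a removable box, and $\mu' = \mu \cup \{(a,b)\}$ for some box $(a,b) \neq (i,j)$ with $(a,b)$ also not equal to $(i,j-1)$ or $(i-1,j)$ — actually I need to be slightly careful: the relevant constraint is that adding $(a,b)$ keeps $(i,j)$ removable, which already rules out the problematic adjacent positions. Then consider the four partitions $\mu$, $\mu - (i,j)$, $\mu'$, $\mu' - (i,j)$. Applying compatibility to the two edges $\mu \to \mu - (i,j)$ and $\mu' \to \mu' - (i,j)$, together with the two edges $\mu \to \mu'$ and $(\mu - (i,j)) \to (\mu' - (i,j))$ (both adding the box $(a,b)$, on the same diagonal $d' = b - a$), I get a commuting square of divisibility relations: either $g_\mu = g_{\mu'}$ or $g_\mu = (t+d') g_{\mu'}$, and similarly for the bottom edge, and the same factor $(t+d)$ or trivial relation on the left and right edges. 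Chasing this diamond shows $g_\mu / g_{\mu-(i,j)}$ (as an element of $\{1, t+d\}$) equals $g_{\mu'}/g_{\mu'-(i,j)}$, i.e. the $(i,j)$-jump status agrees for $\mu$ and $\mu'$. Second, I would prove that the set of partitions having $(i,j)$ as a removable box is connected under such single-box moves: given $\lambda_1, \lambda_2$ both with $(i,j)$ removable, one can successively add boxes to $\lambda_1$ (or remove boxes, away from $(i,j)$) to reach, say, the "staircase-saturated" partition that is the componentwise maximum compatible with $(i,j)$ being the unique removable corner in its row/column region — or more simply, repeatedly add an addable box outside the critical positions until reaching a canonical partition, and do the same for $\lambda_2$, then the two canonical partitions coincide. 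Finally, concatenating single-step invariances along this path gives the claim.

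\textbf{Main obstacle.} The delicate point is the connectivity step: I must ensure that at every stage of the walk the box $(i,j)$ remains \emph{removable} (a corner), not merely present. Adding a box can destroy removability of $(i,j)$ only by adding $(i,j+1)$ or $(i+1,j)$, so the walk must avoid exactly those two moves; I need to check that avoiding them still leaves enough freedom to connect any two valid partitions — this should follow because one can always grow every row $\neq i$ and column region freely, and rows/columns interacting with $(i,j)$ are pinned, so there is a well-defined maximal valid partition that every valid partition can reach by additions. Handling the bookkeeping of which additions are permitted, and confirming the commuting-diamond computation in the boundary cases where $d' = d$ or where one of the four divisibility relations is the trivial one, is where the real care is needed, but each individual verification is elementary given the compatibility definition and Theorem~\ref{divisibility}.
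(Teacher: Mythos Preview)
Your approach is correct but more elaborate than needed; the paper's proof is a streamlined version of the same idea.  The paper reduces at once to the canonical case $\lambda_2 = j^i$ (the $i\times j$ rectangle, which is the \emph{minimal} partition with $(i,j)$ removable), noting that any $\lambda_1$ with $(i,j)$ removable contains it.  Then it considers a single square
\[
\begin{array}{ccc}
g_{\lambda_1} & \longrightarrow & g_{\lambda_2} \\
\big\downarrow & & \big\downarrow \\
g_{\lambda_1-(i,j)} & \longrightarrow & g_{\lambda_2-(i,j)}
\end{array}
\]
where the horizontal arrows remove all boxes of the skew shape $\lambda_1\setminus\lambda_2$.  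The key observation is that no box of this skew shape lies on the diagonal $d=j-i$ (because $(i,j)$ being removable in $\lambda_1$ forces $(i+1,j+1)\notin\lambda_1$, while the rectangle already contains all earlier boxes on that diagonal).  Hence the horizontal arrows never change the $(t+d)$-multiplicity, and the two vertical arrows must agree.  This single global observation replaces your step-by-step diamond chasing, and the connectivity issue vanishes: you simply \emph{remove} boxes down to the rectangle, which trivially preserves removability of $(i,j)$.  Incidentally, your flagged ``boundary case'' $d'=d$ never actually occurs for the same diagonal reason, so even in your framework that worry dissolves.
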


\begin{proof}
We can assume $\lambda_2$ is the rectangular partition $j^i = \underbrace{(j,j,\dots,j)}_{i \text{ times}}$. Observe that we have $\lambda_2 \subset \lambda_1$.To go from $g_{\lambda_1}$ to $g_{\lambda_2 - (i,j)}$, we can remove removable boxes from $\lambda_1$ and track the jumps. 

By removing boxes from $\lambda_1$ we can reach $\lambda_2 - (i,j)$ in two ways. So, we can get to $g_{\lambda_2 - (i,j)}$ from $g_{\lambda_1}$ in two ways as shown in the picture below.

\[\begin{tikzcd} 
g_{\lambda_1} \arrow{r}{} \arrow[swap]{d}{} & g_{\lambda_2} \arrow{d}{} \\
g_{\lambda_1 - (i,j)} \arrow{r}{} & g_{\lambda_2 - (i,j)}
\end{tikzcd}
\]

The horizontal transformations come out of removing boxes from the skew partition $\lambda_1 \smallsetminus \lambda_2$, and hence none of the jumps affect the multiplicity of $t+d$, where $d = j-i$. This is because none of the boxes in the skew partition are on diagonal $d$. Thus either both vertical arrows are jumps or neither are jumps.
\end{proof}

The above lemma shows that jumps are determined not by the partitions themselves, but rather the position of the removable boxes. Since $g_{\emptyset}$ is a polynomial of a finite degree, we can only have a finite number of jumps. Thus collections of compatible monic polynomials are indexed by $(f,\mathcal{C})$ where $f \in K[t]$ is a monic polynomial, and $\mathcal{C}$ is a finite subset of $\Z_{> 0}^2$. 

\begin{definition}
Given a monic polynomial $f \in K[t]$ and a finite subset $\mathcal{C} = \{(i_1,j_1),\dots,(i_k,j_k) \}\subset \Z_{>0}^2$, let $g_{\lambda} = f \cdot \prod_{p=1}^k (t+d_p)^{\epsilon_p}$ where $d_p = j_p - i_p$, and $\epsilon_p = 0$ if $(i_p,j_p) \in \lambda$ and $\epsilon_p = 1$ otherwise. We define an ideal $ {\mathcal I}= \bigoplus_{p \in \Z_{\geq 0}} {\mathcal I}^p_p$, where 
$$
{\mathcal I}^p_p = \bigoplus\limits_{\lambda \vdash n} (g_{\lambda}) \otimes J_{\lambda}.
$$
Note in particular that $g_{\emptyset} =  f \cdot \prod_{p=1}^k (t+d_p)$.
\end{definition}

We summarize the above discussion:

\begin{theorem} \label{ideals}
Every nonzero ideal of $\ZZ$ is of the form ${\mathcal I}(f,\mathcal{C})$ for some monic polynomial $f \in K[t]$, and $\mathcal{C}$ is a finite subset of $\Z_{>0}^2$.
\end{theorem}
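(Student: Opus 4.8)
The plan is to assemble the structural results already established into the desired parametrization; the statement is essentially a summary of the preceding discussion, so the work is in organizing it. Let $\mathcal{I}$ be a nonzero ideal of $\ZZ$. Since $\ZZ^m_n = 0$ for $m\neq n$, the ideal is determined by its graded pieces $\mathcal{I}^n_n \subseteq \ZZ^n_n = K[t]\Sigma_n$. By the first lemma of this subsection each $\mathcal{I}^n_n$ decomposes as $\bigoplus_{\lambda\vdash n}(g_\lambda)\otimes J_\lambda$ for monic polynomials $g_\lambda$ (a priori possibly $0$), and by Lemma~\ref{00not0} together with Corollary~\ref{nonzero}, nonvanishing of $\mathcal{I}$ forces $g_\emptyset\neq 0$, hence $g_\lambda\neq 0$ for every $\lambda$. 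Thus $\mathcal{I}$ corresponds to a collection $\{g_\lambda\}$ of honest monic polynomials, and by Theorem~\ref{divisibility} this correspondence is a bijection between nonzero ideals of $\ZZ$ and compatible collections of monic polynomials. It therefore remains to show that every compatible collection $\{g_\lambda\}$ arises from a pair $(f,\mathcal{C})$ with $f\in K[t]$ monic and $\mathcal{C}\subseteq\Z_{>0}^2$ finite, via the formula $g_\lambda = f\cdot\prod_{p}(t+d_p)^{\epsilon_p}$ of the Definition preceding the statement.

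Next I would carry out the jump analysis. Call $(i,j)$ an $(i,j)$-jump of $\lambda$ if $(i,j)$ is a removable box of $\lambda$ and $g_\lambda\neq g_{\lambda-(i,j)}$; compatibility then forces $g_{\lambda-(i,j)}=(t+d)g_\lambda$ with $d=j-i$. The key input is the preceding Lemma: whether $\lambda$ admits an $(i,j)$-jump depends only on the position $(i,j)$ and not on $\lambda$, as long as $(i,j)$ is removable from $\lambda$. Let $\mathcal{C}\subseteq\Z_{>0}^2$ be the set of positions that are jumps. Each jump strictly raises the degree of the polynomial attached to the smaller partition, and all these degrees are bounded above by $\deg g_\emptyset<\infty$, so $\mathcal{C}$ is finite. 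Write $\mathcal{C}=\{(i_1,j_1),\dots,(i_k,j_k)\}$ and $d_p=j_p-i_p$, and for a partition $\lambda$ set $\epsilon_p=0$ if $(i_p,j_p)\in\lambda$ and $\epsilon_p=1$ otherwise.

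To conclude, I would walk a path of single-box additions and deletions between any two partitions through their union (or a common large rectangle), applying compatibility at each elementary step and invoking the jump lemma to decide exactly which steps contribute a factor $t+d$ — only those crossing a box of $\mathcal{C}$ do. This yields $g_\lambda = g_{\lambda'}\cdot\prod_p (t+d_p)^{\epsilon_p-\epsilon'_p}$ for all $\lambda,\lambda'$. Taking $\lambda$ large enough to contain every box of $\mathcal{C}$, so that all $\epsilon_p=0$, and setting $f:=g_\lambda$ for such $\lambda$ (well-defined by the same path argument, since for such $\lambda$ no further jumps occur along paths between them), we get $g_\mu = f\cdot\prod_p(t+d_p)^{\epsilon_p}$ for every $\mu$, i.e.\ $\mathcal{I}=\mathcal{I}(f,\mathcal{C})$. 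Conversely every such pair produces a compatible collection, hence a nonzero ideal, by the Definition and Theorem~\ref{divisibility}, so the correspondence $(f,\mathcal{C})\mapsto\mathcal{I}(f,\mathcal{C})$ is onto the set of nonzero ideals (in fact a bijection, matching Theorem~\ref{informal}).

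I expect the main obstacle to be the bookkeeping in this last step: one must verify that the factor acquired along a path of box moves from $\lambda$ to $\lambda'$ depends only on the endpoints, so that both $f$ and the exponents $\epsilon_p$ are well-defined, and that diagonals $d\notin\{d_1,\dots,d_k\}$ genuinely never contribute. This is precisely what the jump lemma provides — jumps are position-local — but making the path-independence argument airtight, e.g.\ by reducing any two paths to a common refinement via commuting elementary box moves, is where the real care lies. Everything else is a direct reassembly of Lemma~\ref{div1}, Proposition~\ref{div2}, Theorem~\ref{divisibility}, Lemma~\ref{00not0}, Corollary~\ref{nonzero}, and the jump lemma.
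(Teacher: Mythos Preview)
Your proposal is correct and follows essentially the same route as the paper: the theorem is stated there as a summary of the preceding discussion, and you have reassembled exactly those ingredients (the isotypic decomposition lemma, Lemma~\ref{00not0}, Corollary~\ref{nonzero}, Theorem~\ref{divisibility}, and the jump lemma) in the same order, with the same finiteness-via-degree argument for $\mathcal{C}$. Your explicit path-independence bookkeeping is a bit more careful than the paper's terse ``thus collections of compatible monic polynomials are indexed by $(f,\mathcal{C})$,'' but the underlying argument is identical.
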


Note that the monic polynomial $f$ could be $1$.

\subsection{Prime ideals and maximal ideals} 
We will now classify the prime ideals and maximal ideals of ${\mathcal Z}$.

\begin{definition}
An ideal ${\mathcal I}$ of a wheeled PROP $\mathcal{A}$ is a prime ideal if for $a \in {\mathcal A}^p_q$ and $b \in {\mathcal A}^r_s$, we have $a \otimes b \in {\mathcal I} \implies a \in {\mathcal I}$ or $b \in {\mathcal I}$. The ideal ${\mathcal I}$ is called maximal if
${\mathcal I}\neq {\mathcal A}$ and for every ideal ${\mathcal J}$ with ${\mathcal I}\subseteq {\mathcal J}\subseteq {\mathcal A}$
we have ${\mathcal J}={\mathcal I}$ or ${\mathcal J}={\mathcal A}$. 
\end{definition}
\begin{lemma}
A maximal ideal of ${\mathcal A}$ is prime.
\end{lemma}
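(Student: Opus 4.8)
The plan is to mimic the classical argument that maximal ideals in a commutative ring are prime, but taking care of the peculiarities of the wheeled PROP setting: here the ``multiplication'' $\otimes$ is bigraded and an element $a\otimes b$ lives in a different graded piece than $a$ or $b$. I will argue by contrapositive. Suppose ${\mathcal I}$ is maximal but not prime, so there exist $a\in{\mathcal A}^p_q$ and $b\in{\mathcal A}^r_s$ with $a\otimes b\in{\mathcal I}$ but $a\notin{\mathcal I}$ and $b\notin{\mathcal I}$.

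The key construction is, for an element $a\in{\mathcal A}^p_q$ not in ${\mathcal I}$, to form the smallest ideal containing ${\mathcal I}$ and $a$; call it $\langle{\mathcal I},a\rangle$. Concretely this is the span of ${\mathcal I}$ together with all elements obtained from $a$ by tensoring on both sides with arbitrary elements of ${\mathcal A}$ and then applying arbitrary sequences of contractions $\partial^i_j$ (one checks this set is closed under $\otimes$ and $\partial^i_j$, hence an ideal, using the hypothesis that ${\mathcal I}$ is already an ideal). Since $a\notin{\mathcal I}$ this ideal strictly contains ${\mathcal I}$, so by maximality $\langle{\mathcal I},a\rangle={\mathcal A}$; in particular $1_{\mathcal A}\in\langle{\mathcal I},a\rangle$. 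The same reasoning with $b$ gives $1_{\mathcal A}\in\langle{\mathcal I},b\rangle$.

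Now I want to derive a contradiction by ``multiplying'' these two expressions for $1_{\mathcal A}$. Write $1_{\mathcal A}=j_1+\Phi(a)$ where $j_1\in{\mathcal I}$ and $\Phi(a)$ is a $K$-linear combination of terms built from $a$ by tensoring and contracting; similarly $1_{\mathcal A}=j_2+\Psi(b)$. Then
\begin{equation*}
1_{\mathcal A}=1_{\mathcal A}\otimes 1_{\mathcal A}=(j_1+\Phi(a))\otimes(j_2+\Psi(b))=j_1\otimes 1_{\mathcal A}+1_{\mathcal A}\otimes j_2-j_1\otimes j_2+\Phi(a)\otimes\Psi(b).
\end{equation*}
The first three terms lie in ${\mathcal I}$ since ${\mathcal I}$ is an ideal. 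For the last term, the point is that $\Phi(a)\otimes\Psi(b)$ can be rewritten as a linear combination of terms each of which contains the block $a\otimes b$ (possibly after permuting tensor factors using the $\Sigma_p\times\Sigma_q$-action, which is built from $\otimes$ and $\partial^i_j$) with further tensoring and contractions applied; since $a\otimes b\in{\mathcal I}$ and ${\mathcal I}$ is an ideal stable under these operations, $\Phi(a)\otimes\Psi(b)\in{\mathcal I}$. Hence $1_{\mathcal A}\in{\mathcal I}$, forcing ${\mathcal I}={\mathcal A}$, contradicting maximality.

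I expect the main obstacle to be the step where $\Phi(a)\otimes\Psi(b)$ is shown to lie in ${\mathcal I}$: one must be careful that when tensoring a term involving $a$ with a term involving $b$, the two occurrences can genuinely be brought together into the pattern $a\otimes b$ on which the ideal hypothesis applies, and that the intervening contractions don't contract legs of $a$ against legs of $b$ in a way that destroys the block $a\otimes b$. The cleanest way around this is to observe that in any term of $\Phi(a)$, the subexpression $a$ appears as a tensor factor before any contractions relating it to the rest; so $\Phi(a)=\sum_k c_k\,\partial_{(k)}(u_k\otimes a\otimes w_k)$ for scalars $c_k$ and elements $u_k,w_k$ (here $\partial_{(k)}$ denotes some composite of contractions), and similarly for $\Psi(b)$. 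Then $\Phi(a)\otimes\Psi(b)$ is a sum of terms $\partial_{(k)}(u_k\otimes a\otimes w_k)\otimes\partial_{(l)}(u_l'\otimes b\otimes w_l')$, which equals $(\partial_{(k)}\otimes\partial_{(l)})\big((u_k\otimes a\otimes w_k)\otimes(u_l'\otimes b\otimes w_l')\big)$; permuting tensor factors to bring $a$ adjacent to $b$ exhibits each such term as obtained from $a\otimes b$ by tensoring and contracting, whence it lies in ${\mathcal I}$. Once this is in hand the rest is bookkeeping.
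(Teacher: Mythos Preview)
Your proof is correct and follows the same classical ``maximal $\Rightarrow$ prime'' template as the paper. The paper's argument is a little more economical, though: instead of writing $1$ as an element of both $\langle{\mathcal I},a\rangle$ and $\langle{\mathcal I},b\rangle$ and then expanding the product $1\otimes 1$, it only uses $1\in\langle{\mathcal M},a\rangle$ and tensors once with $b$ on the right, concluding directly that $b=1\otimes b$ lies in the ideal generated by $a\otimes b$ and ${\mathcal M}\otimes b$, hence in ${\mathcal M}$. This asymmetric shortcut avoids your four-term expansion. That said, the step the paper glosses over---why $\Phi(a)\otimes b$ lies in the ideal generated by $a\otimes b$---is precisely what you spell out carefully (commuting $\partial$ past $\otimes$ and permuting tensor factors to make $a$ and $b$ adjacent), so your more explicit treatment actually fills in a detail the paper leaves implicit.
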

\begin{proof}
Suppose that ${\mathcal M}$ is a maximal ideal and $a\otimes b\in {\mathcal M}$ where $a\in {\mathcal A}^p_q$ and $b\in {\mathcal A}^r_s$. 
Assume that $a\not\in {\mathcal M}$. Then $1$ lies in the ideal generated by $a$ and ${\mathcal M}$
and $1\otimes b$ lies in the ideal generated by $a\otimes b$ and ${\mathcal M}\otimes b$. This shows that $b=1\otimes b\in {\mathcal M}$.
\end{proof}

\begin{remark}
For a prime ideal $\PR$ of a wheeled PROP $\mathcal{A}$, we have  that $\PR^0_0$ is a prime ideal of the commutative ring $\mathcal{A}^0_0$. 
\end{remark}

Let $\PR$ be a prime ideal in the wheeled PROP $\ZZ$, and let $\PR^n_n = \bigoplus\limits_{\lambda \vdash n} (g_{\lambda}) \otimes J_{\lambda}$, where the $g_{\lambda}'s$ are monic polynomials or $zero$.



\begin{proposition}
The prime ideals of $\ZZ$ are $0$,  ${\mathcal I}(t-a, \emptyset)$ for some $a\in K$ or ${\mathcal I}(1,\{(i,j)\})$ for some $i,j\in \Z_{>0}$.
\end{proposition}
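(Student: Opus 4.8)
The plan is to use the classification of all ideals from Theorem~\ref{ideals}: every nonzero ideal is ${\mathcal I}(f,\mathcal C)$ for a monic $f\in K[t]$ and a finite $\mathcal C\subseteq \Z_{>0}^2$, with $g_\emptyset = f\cdot\prod_{p=1}^k(t+d_p)$ where $\mathcal C=\{(i_1,j_1),\dots,(i_k,j_k)\}$ and $d_p=j_p-i_p$. Since $0$ is obviously prime, I would assume $\PR={\mathcal I}(f,\mathcal C)$ is a nonzero prime ideal and show it must be one of the listed forms. The guiding principle is the Remark that $\PR^0_0$ must be a prime ideal of $\ZZ^0_0=K[t]$, so $\PR^0_0 = (0)$ or $(t-a)$ for some $a\in K$; but $\PR^0_0=(g_\emptyset)$ with $g_\emptyset\neq 0$ by Lemma~\ref{00not0}, so $g_\emptyset = t-a$ for some $a\in K$. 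In particular $g_\emptyset$ has degree exactly $1$, which already forces $\deg f + k = 1$: either $f=1$ and $k=1$, or $\deg f = 1$ and $k=0$.

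First I would dispose of the case $\deg f = 1$, $k=0$, i.e.\ $\mathcal C=\emptyset$. Then $g_\lambda = f = t-a$ for all $\lambda$, so $\PR = {\mathcal I}(t-a,\emptyset)$, and I must check this is actually prime. Using the tensor product structure of $\ZZ$ and the isotypic decomposition, I would observe that $\ZZ/{\mathcal I}(t-a,\emptyset)$ has $(\ZZ/{\mathcal I})^n_n = \bigoplus_{\lambda\vdash n} (K[t]/(t-a))\otimes J_\lambda$, which is just $K\Sigma_n$ with $t$ acting as the scalar $a$. The claim that this quotient has no zero divisors with respect to $\otimes$ can be verified by a ``leading term'' argument: if $a\otimes b = 0$ in the quotient with $a\in (\ZZ/{\mathcal I})^p_q$, $b\in(\ZZ/{\mathcal I})^r_s$ nonzero, contract down using $\partial^i_j$'s repeatedly as in the proof of Lemma~\ref{00not0} to reach a contradiction in degree $0$; alternatively, observe that $\ZZ/{\mathcal I}(t-a,\emptyset)$ embeds in $\V$ (the mixed tensor algebra on an $n$-dimensional space once $a=n$ is a nonnegative integer, and more generally into a suitable limit/formal version) and $\V$ is a domain with respect to $\otimes$ since $(V^\star)^{\otimes p}\otimes V^{\otimes q}\otimes(V^\star)^{\otimes r}\otimes V^{\otimes s}\hookrightarrow(V^\star)^{\otimes(p+r)}\otimes V^{\otimes(q+s)}$ is injective on pure tensors. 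I would use whichever of these is cleanest — the contraction argument is self-contained within the excerpt.

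Next, the case $f=1$, $k=1$, i.e.\ $\mathcal C=\{(i,j)\}$, giving $\PR={\mathcal I}(1,\{(i,j)\})$; here $g_\lambda = (t+d)^{\epsilon}$ with $d=j-i$ and $\epsilon=0$ if $(i,j)\in\lambda$, else $\epsilon=1$. For primeness I would again pass to the quotient: $(\ZZ/\PR)^n_n = \bigoplus_{\lambda\vdash n,\ (i,j)\in\lambda} J_\lambda \ \oplus\ \bigoplus_{\lambda\vdash n,\ (i,j)\notin\lambda} (K[t]/(t+d))\otimes J_\lambda$. The point is that modulo $\PR$, the ``corner'' partition $j^i$ and everything below it in dominance on the relevant diagonal gets killed or has $t$ specialized to $-d$; I expect $\ZZ/\PR$ to be isomorphic (up to the specialization on the smaller block) to a sub/quotient PROP that still injects into a mixed tensor algebra $\V$ with $n = -d$ reinterpreted appropriately — this is exactly the ``super'' or negative-dimensional specialization, and the content of the $(i,j)$-jump is that partitions not containing box $(i,j)$ correspond to representations that vanish in dimension $n$ when $n$ is on that diagonal. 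Concretely, primeness should again reduce to a contraction/leading-term argument showing that if $a\otimes b\equiv 0 \pmod{\PR}$ then iterating $\partial$'s forces $a\in\PR$ or $b\in\PR$, using that the set of $\lambda$ with $(i,j)\in\lambda$ is closed under adding boxes (an ``up-set'' in Young's lattice), so the surviving part of the tensor product of two nonzero elements remains nonzero.

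The main obstacle is the primeness verification in the second case: one must show that the combinatorial up-set structure $\{\lambda : (i,j)\in\lambda\}$ together with the $t=-d$ specialization on its complement genuinely yields a $\otimes$-domain, ruling out subtle cancellations among different isotypic components after tensoring. I would handle this by reducing to the ring-theoretic statement via Lemma~\ref{div1} (tensoring by $\id$ corresponds to ${\rm Ind}$, which only adds boxes) and the fact that tensoring two Young symmetrizers $y_S\otimes y_T$ and projecting onto an isotypic component is governed by Littlewood--Richardson, where the ``largest'' surviving component (in dominance) corresponding to $(i,j)$-containing shapes is never annihilated; if that proves delicate I would fall back on constructing an explicit faithful representation of $\ZZ/\PR$ into a genuine mixed tensor algebra by choosing $V$ of the right dimension (possibly a $\Z/2$-graded ``super'' vector space to realize $t=-d<0$), which makes the domain property immediate and simultaneously confirms $\PR$ is the kernel of that representation, hence prime. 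Finally I would note no other $(f,\mathcal C)$ can be prime, since $\deg f + k \geq 2$ would make $g_\emptyset$ reducible or of degree $\geq 2$, so $\PR^0_0=(g_\emptyset)$ would fail to be a prime ideal of $K[t]$, contradicting the Remark.
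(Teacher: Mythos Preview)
Your necessity argument --- $\PR^0_0$ must be a prime ideal of $K[t]$, hence $(0)$ or $(t-a)$, which via $g_\emptyset=f\prod_{p}(t+d_p)$ forces $\deg f+k\le 1$ --- is exactly the paper's proof, and the paper stops there: it never verifies that each listed ideal is actually prime, only that any prime must appear on the list.

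The primeness verifications you add are therefore beyond what the paper does. For $0$ and ${\mathcal I}(t-a,\emptyset)$ a cleaner route than either of your sketches is available: since $\ZZ^p_q=0$ for $p\neq q$, it suffices to note that for nonzero $a\in K[t]\Sigma_p$ and $b\in K[t]\Sigma_q$ the element $a\otimes b$ is nonzero because the basis elements $[\sigma]\otimes[\tau]=[\sigma\times\tau]$ are pairwise distinct in $\Sigma_{p+q}$ and the coefficient ring $K[t]$ (respectively $K$, after specializing $t$) is a domain. Your embedding-in-$\V$ idea only works directly when $a\in\Z_{\ge 0}$, so it does not cover all $a\in K$.

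For ${\mathcal I}(1,\{(i,j)\})$ your ``up-set'' heuristic has a genuine gap: the fact that partitions containing $(i,j)$ form an up-set in Young's lattice does not by itself show that two elements supported on partitions \emph{not} containing $(i,j)$ have tensor product with nonzero projection onto that same set. What is actually needed is that whenever $(i,j)\notin\mu$ and $(i,j)\notin\nu$ there exists $\lambda$ with $(i,j)\notin\lambda$ and Littlewood--Richardson coefficient $c^\lambda_{\mu\nu}>0$ (and moreover that no cancellation occurs among such $\lambda$); this is a nontrivial combinatorial statement, essentially equivalent to closure of polynomial $\GL(i{-}1\mid j{-}1)$-representations under tensor product. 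Your super vector space fallback is the right idea and would make the argument rigorous, but it requires setting up the super mixed tensor algebra and identifying the kernel of $\ZZ\to\V_{\mathrm{super}}$ precisely as ${\mathcal I}(1,\{(i,j)\})$, none of which is developed here.
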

\begin{proof}
Suppose that $\PR$ is a prime ideal of $\ZZ$.
By the Remark above, $\PR^0_0$ is a prime ideal of $\ZZ^0_0=K[t]$,
so it is equal to $0$ or to $(t-a)$ for some $a\in K$. If $\PR^0_0=0$, then we get $\PR=0$.
Suppose that $\PR^0_0=(t-a)$.  We have $\PR={\mathcal I}(f,{\mathcal C})$ where $f\in K[t]$ is monic and ${\mathcal C}$ is finite.
and $t-a=f\prod_{(i,j)\in {\mathcal C}} (t+j-i)$. There are 2 cases:
\begin{enumerate}
\item  $f=t-a$ and ${\mathcal C}=\emptyset$, or
\item $f=1$, ${\mathcal C}=\{(i,j)\}$ and $a=i-j$.
\end{enumerate}
\end{proof}
\begin{corollary}
The maximal ideals of $\ZZ$ are ${\mathcal I}(t-a,\emptyset) $ with $a\in K\setminus\Z$ and ${\mathcal I}(1,\{(i,j)\})$ with $i,j\in \Z_{>0}$.
\end{corollary}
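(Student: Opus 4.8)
The final statement is the Corollary describing the maximal ideals of $\ZZ$. The plan is to deduce it from the preceding Proposition classifying prime ideals, since every maximal ideal is prime (by the Lemma just proved).

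\textbf{Approach.} By the Lemma, every maximal ideal of $\ZZ$ is prime, so it appears in the list from the Proposition: it is either $0$, or ${\mathcal I}(t-a,\emptyset)$ for some $a \in K$, or ${\mathcal I}(1,\{(i,j)\})$ for some $i,j \in \Z_{>0}$. So the work is to decide which of these prime ideals are actually maximal. First I would dispose of $0$: it is not maximal because, e.g., $0 \subsetneq {\mathcal I}(t,\emptyset) \subsetneq \ZZ$, using Theorem~\ref{ideals} to see these are proper distinct ideals (the middle one is proper since $g_\emptyset = t \neq $ a nonzero constant, hence ${\mathcal I}^0_0 \neq K[t]$). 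Next I would handle ${\mathcal I}(1,\{(i,j)\})$: I claim this is always maximal. Here $g_\emptyset = t + j - i$, a degree-one polynomial, and more generally $g_\lambda = (t+j-i)^{\epsilon}$ with $\epsilon \in \{0,1\}$, so every $g_\lambda$ is either $1$ or $t+j-i$. If ${\mathcal J} \supsetneq {\mathcal I}(1,\{(i,j)\})$ is a larger ideal, then for some partition $\mu$ the polynomial generating the $J_\mu$-component of ${\mathcal J}$ strictly divides $g_\mu^{\mathcal I}$; since $g_\mu^{\mathcal I} \in \{1, t+j-i\}$, strict divisibility forces that generator to be $1$, i.e. $J_\mu \subseteq {\mathcal J}^{n}_{n}$ where $n = |\mu|$. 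Then using Lemma~\ref{div1} (tensoring with $\id$) and Proposition~\ref{div2} (contractions) one propagates: tensoring with $\id$ puts all $J_\nu$ for $\nu = \mu \cup \square$ into ${\mathcal J}$, and iterating, $J_\nu \subseteq {\mathcal J}$ for all $\nu \supseteq \mu$; contracting then brings in smaller partitions, and chasing the diagonal-$d$ bookkeeping from the compatibility analysis shows one eventually reaches $1 \in {\mathcal J}^0_0$, so ${\mathcal J} = \ZZ$. Cleaner: ${\mathcal J}$ corresponds to some $(f,{\mathcal C})$ with each generator dividing the corresponding one for $(1,\{(i,j)\})$; the only compatible collections below $\{g_\lambda^{\mathcal I}\}$ are $(1,\{(i,j)\})$ itself and $(1,\emptyset)$, and the latter is all of $\ZZ$ — so ${\mathcal I}(1,\{(i,j)\})$ is maximal.

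\textbf{The linear polynomial case.} Finally I would analyze ${\mathcal I}(t-a,\emptyset)$ and show it is maximal if and only if $a \notin \Z$ (here one should read $\Z$ as the positive integers $\Z_{>0}$, matching the Proposition — I would state it as $a \in K \setminus \Z$ per the corollary). If $a \notin \Z_{>0}$, then for no box $(i,j) \in \Z_{>0}^2$ is $a = i - j$ with $i - j$... wait — one must be careful: $i - j$ ranges over all integers $\le$ something, actually over all of $\Z$ as $(i,j)$ ranges over $\Z_{>0}^2$ (take $i=1$ to get negative values, $j=1$ to get nonnegative). Hmm, so $i-j$ takes every integer value. So $t + j - i = t - (i-j)$ realizes $t - d$ for every $d \in \Z$. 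Thus ${\mathcal I}(t-a,\emptyset) = {\mathcal I}(t - d, \emptyset)$ with a larger ideal ${\mathcal I}(1, \{(i,j)\})$ whenever $a = d = j - i$ — i.e. always, when $a \in \Z$. So when $a \in \Z$, ${\mathcal I}(t-a,\emptyset) \subsetneq {\mathcal I}(1,\{(i,j)\}) \subsetneq \ZZ$ for a box with $j - i = a$, hence not maximal. When $a \in K \setminus \Z$: any ideal ${\mathcal J} \supsetneq {\mathcal I}(t-a,\emptyset)$ corresponds to $(f, {\mathcal C})$ with $f \cdot \prod_{(i,j) \in {\mathcal C}}(t + j - i) \mid (t-a)$, and $f$ strictly smaller or ${\mathcal C}$ nonempty; but $t - a$ is irreducible and its only monic divisors are $1$ and $t-a$, while no $(t+j-i)$ divides $t-a$ since that would need $a = i - j \in \Z$. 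So ${\mathcal C} = \emptyset$ and $f = 1$, giving ${\mathcal J} = \ZZ$. Hence ${\mathcal I}(t-a,\emptyset)$ is maximal precisely for $a \notin \Z$, and together with the always-maximal ${\mathcal I}(1,\{(i,j)\})$ this gives the stated list.

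\textbf{Main obstacle.} The routine part is the divisibility bookkeeping via Theorem~\ref{ideals}. The one subtle point I would be most careful about is the degenerate overlaps between the two families — that ${\mathcal I}(t-a,\emptyset)$ for integer $a$ is contained in infinitely many ${\mathcal I}(1,\{(i,j)\})$, and conversely that the ${\mathcal I}(1,\{(i,j)\})$ only depend on $d = j-i$ (so they repeat), and checking that no ${\mathcal I}(1,\{(i,j)\})$ sits strictly inside another — which follows since a strictly smaller compatible collection below $g_\lambda \in \{1, t-d\}$ must be the all-ones collection, i.e. $\ZZ$ itself.
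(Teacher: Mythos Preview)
Your overall approach coincides with the paper's: both start from the observation that maximal ideals are prime, take the list of primes from the Proposition, and then determine which are maximal by examining containments (the paper compresses this to the single sentence ``maximal ideals are exactly the prime ideals not properly contained in other prime ideals''). Your treatment of $0$ and of ${\mathcal I}(t-a,\emptyset)$ is correct and matches what the paper leaves implicit.

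There is, however, a genuine gap in your argument that every ${\mathcal I}(1,\{(i,j)\})$ is maximal. You assert that ``a strictly smaller compatible collection below $g_\lambda\in\{1,t-d\}$ must be the all-ones collection'', and in your final paragraph that ``no ${\mathcal I}(1,\{(i,j)\})$ sits strictly inside another''. This is false. If $(i',j')$ lies on the same diagonal $d=j-i=j'-i'$ with $i'<i$ and $j'<j$, then for every partition $\lambda$ one has $(i,j)\in\lambda\Rightarrow(i',j')\in\lambda$, so $g_\lambda^{(1,\{(i',j')\})}$ divides $g_\lambda^{(1,\{(i,j)\})}$ for all $\lambda$, giving
\[
{\mathcal I}(1,\{(i,j)\})\ \subsetneq\ {\mathcal I}(1,\{(i',j')\})\ \subsetneq\ \ZZ,
\]
with strictness visible at the rectangular partition $\lambda=(j')^{i'}$, where the first collection has $g_\lambda=t-d$ and the second has $g_\lambda=1$. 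Concretely, ${\mathcal I}(1,\{(2,2)\})\subsetneq{\mathcal I}(1,\{(1,1)\})$. Thus along each diagonal these ideals form a proper chain, and only the one with $\min(i,j)=1$ can possibly be maximal. You correctly flagged exactly this incomparability check as the ``main obstacle'', but the resolution you give does not hold. (The paper's one-line proof does not work through this point either, so the gap is not merely in your write-up.)
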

\begin{proof}
Maximal ideals are exactly the prime ideals that are not properly contained in other prime ideals. Now the corollary follows from the classification of prime ideals.
\end{proof}

\section{Representable wheeled PROPs}\label{sec5}
\subsection{An equivalence of categories}

Suppose that $G$ is an affine algebraic group (over $K$) and $K[G]$ is the coordinate ring of $G$ and $W$ is a $K$-vector space. 
An action of $G$ on $W$ is called rational if there exists a $K$-linear map $\gamma:W\to K[G]\otimes W$
with the following property: if $\gamma(w)=\sum_{j=1}^r f_j\otimes w_j$ for some $r$, functions $f_1,\dots,f_r\in K[G]$ and vectors $w_1,\dots,w_r\in W$, then $g\cdot w=\sum_{j=1}^r f_j(g)w_j$ for all $g\in G$. 
 A $G$-algebra is a commutative $K$-algebra $R$ with a rational action of $G$ such
 that $G$ acts by $K$-algebra automorphisms.

For this section, let $V$ denote an $n$-dimensional vector space, and let $\mathcal{V} = \bigoplus_{p,q \in \N} \V^p_q$ denote the mixed tensor algebra over $V$. We introduce two categories:
\begin{description}
\item[$\GLn$] 
 In the category $\GLn$ the objects are commutative $\GL(V)$-algebras and morphisms are $\GL(V)$-equivariant $K$-algebra homomorphisms.

\item[$\BBn$] A wheeled PROP is called $n$-representable if it is isomorphic to a sub-wheeled PROP of $R\otimes {\mathcal V}$ where $R$ is a commutative $K$-algebra. The objects of $\BBn$ are $n$-representable wheeled PROPs. 
The morphisms in $\BBn$ are homomorphisms of wheeled PROPs.
\end{description}

We construct a {\it covariant functor} $\Theta:\GLn\to\BBn$ as
follows. Suppose that $R$ is a $\GL(V)$-algebra. Note that $\GL(V)$
acts on $R$ as well as on the algebra ${\mathcal V}$. We define
$$\Theta(R)=(R\otimes {\mathcal V})^{\GL(V)}=\bigoplus_{p,q\in \N} (R\otimes
\V^p_q)^{\GL(V)}.
$$
It is easy to see that $(R\otimes {\mathcal V})^{\GL(V)}$ is closed
under $\otimes$ and $\partial^i_j$. So $\Theta(R)=(R\otimes
{\mathcal V})^{\GL(V)}$ is a subalgebra of $R\otimes {\mathcal V}$, and hence an object of $\BBn$.

Suppose that $\psi:R\to S$ is a $\GL(V)$-equivariant ring
homomorphism. Then $\psi$ induces a $\GL(V)$-equivariant
homomorphism of wheeled PROPs
$$
\psi \otimes {\rm id} :R\otimes {\mathcal V}\longrightarrow S\otimes
{\mathcal V}.
$$
Taking $\GL(V)$-invariants on both sides gives us a homomorphism of
wheeled PROPs
$$
\Theta(R)=(R\otimes {\mathcal V})^{\GL(V)}\longrightarrow
\Theta(S)=(S\otimes{\mathcal V})^{\GL(V)}
$$
which we will denote by $\Theta(\psi)$. We leave it to the reader to
check that $\Theta$ is indeed a functor.

Before we define a functor in the other direction, we need the
following lemma.
\begin{lem}\label{lem1}
For an object  ${\mathcal A}$ in $\BBn$, there exists a
unique commutative $K$-algebra $R$ and a $K$-algebra homomorphism
$\rho:{\mathcal A}\to R\otimes{\mathcal V}$ of wheeled PROPs with the following universal property: If $S$ is a
commutative $K$-algebra and $\lambda:{\mathcal A}\to S\otimes
{\mathcal V}$ is a homomorphism of wheeled PROPs, then
there exists a unique $K$-algebra homomorphism $\psi:R\to S$ such that
$(\psi\otimes \id)\circ \rho=\lambda$. In diagrams:
$$
\xymatrix{{\mathcal A}\ar[r]^-\rho\ar[d]_-{\lambda} & R\otimes {\mathcal V}\ar@{.>}[ld]^-{\psi\otimes \id}\\
S\otimes{\mathcal V} & }.
$$
\end{lem}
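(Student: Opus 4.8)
The plan is to construct $R$ explicitly as a quotient of a polynomial ring (in fact of a symmetric algebra) and verify the universal property directly. The idea is that the data of a wheeled PROP homomorphism $\lambda:{\mathcal A}\to S\otimes{\mathcal V}$ is controlled by where the generators of ${\mathcal A}$ go. Concretely, choose a set ${\mathscr G}$ of generators for ${\mathcal A}$ as a wheeled PROP (it exists since ${\mathcal A}$ is a wheeled PROP). For each generator $A\in{\mathscr G}$ of type ${p\choose q}$, a homomorphism into $S\otimes{\mathcal V}$ must send $A$ to an element of $(S\otimes{\mathcal V}^p_q)$. Since ${\mathcal V}^p_q$ is a finite-dimensional $K$-vector space with a chosen basis $\{e_\alpha\}$, an element of $S\otimes{\mathcal V}^p_q$ is the same as a finite tuple of elements of $S$, one coordinate $s_{A,\alpha}$ for each basis vector $e_\alpha$ of ${\mathcal V}^p_q$. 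So I would first form the polynomial ring $P = K[x_{A,\alpha} : A\in{\mathscr G},\ \alpha]$ in these (finitely or countably many) indeterminates, together with the tautological element $\widetilde A = \sum_\alpha x_{A,\alpha}\otimes e_\alpha \in P\otimes{\mathcal V}^p_q$ for each $A$.

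Next, by the universal property of the free wheeled PROP ${\mathcal Z}\langle{\mathscr G}\rangle$ (the Proposition in Section~\ref{sec3}), the assignment $A\mapsto\widetilde A$ extends to a unique homomorphism of wheeled PROPs $\widetilde\rho:{\mathcal Z}\langle{\mathscr G}\rangle\to P\otimes{\mathcal V}$. Compose with the surjection ${\mathcal Z}\langle{\mathscr G}\rangle\twoheadrightarrow{\mathcal A}$; this need not factor through ${\mathcal A}$ on the nose, but the obstruction is an ideal. Precisely: let ${\mathcal K}=\ker({\mathcal Z}\langle{\mathscr G}\rangle\to{\mathcal A})$, and for each element $k\in{\mathcal K}^{p}_{q}$, write $\widetilde\rho(k)=\sum_\alpha r_{k,\alpha}\otimes e_\alpha\in P\otimes{\mathcal V}^p_q$ with $r_{k,\alpha}\in P$; let $I\subseteq P$ be the ideal generated by all these coefficients $r_{k,\alpha}$ as $k$ ranges over ${\mathcal K}$. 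Set $R=P/I$. Then $\widetilde\rho$ descends to a homomorphism of wheeled PROPs $\rho:{\mathcal A}\to R\otimes{\mathcal V}$, because the relations defining ${\mathcal A}$ are killed by construction. (One should check $R$ is nonzero / the construction is nontrivial, but this is automatic since ${\mathcal A}$ is $n$-representable: there already exists \emph{some} embedding ${\mathcal A}\hookrightarrow R_0\otimes{\mathcal V}$, which by the argument below gives an algebra map $R\to R_0$, so $R\neq 0$.)

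For the universal property: given $\lambda:{\mathcal A}\to S\otimes{\mathcal V}$, each generator $A$ maps to $\lambda(A)=\sum_\alpha s_{A,\alpha}\otimes e_\alpha$ with $s_{A,\alpha}\in S$; define the $K$-algebra homomorphism $\psi_0:P\to S$ by $x_{A,\alpha}\mapsto s_{A,\alpha}$. Then $(\psi_0\otimes\id)\circ\widetilde\rho$ and $\lambda\circ(\text{projection})$ are two wheeled-PROP homomorphisms ${\mathcal Z}\langle{\mathscr G}\rangle\to S\otimes{\mathcal V}$ agreeing on ${\mathscr G}$, hence equal by the universal property of the free wheeled PROP; in particular $(\psi_0\otimes\id)$ kills $\widetilde\rho({\mathcal K})$, so $\psi_0(I)=0$ and $\psi_0$ factors through a map $\psi:R\to S$. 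One checks $(\psi\otimes\id)\circ\rho=\lambda$ by evaluating both sides on generators (they agree on ${\mathscr G}$, hence everywhere). Uniqueness of $\psi$ follows because $R$ is generated as a $K$-algebra by the images of the $x_{A,\alpha}$, and the equation $(\psi\otimes\id)\circ\rho=\lambda$ pins down $\psi$ on exactly those generators: writing $\rho(A)=\sum_\alpha \bar x_{A,\alpha}\otimes e_\alpha$, comparing coefficients in $S\otimes{\mathcal V}^p_q$ forces $\psi(\bar x_{A,\alpha})=s_{A,\alpha}$. Finally, uniqueness of the pair $(R,\rho)$ up to unique isomorphism is the standard formal consequence of the universal property.

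The main obstacle I anticipate is the coefficient-extraction step: to make "compare coefficients in $S\otimes{\mathcal V}^p_q$" rigorous one uses that ${\mathcal V}^p_q$ is a \emph{finite-dimensional} $K$-vector space, so $S\otimes_K{\mathcal V}^p_q \cong S^{\oplus\dim{\mathcal V}^p_q}$ canonically and a tensor in $S\otimes{\mathcal V}^p_q$ has well-defined coordinates in $S$; this is where finite-dimensionality of $V$ is essential, and it is what makes the functor $R\mapsto R\otimes{\mathcal V}$ "coordinatizable." A secondary subtlety is that ${\mathscr G}$ and the generating set of ${\mathcal K}$ may be infinite, so $P$ is a polynomial ring in infinitely many variables and $I$ an ideal generated by infinitely many elements — harmless, since all the universal-property arguments are finitary (any given element of ${\mathcal A}$ or of a relation involves only finitely many generators), but worth a remark. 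No appeal to reductivity or to invariant theory is needed for this lemma; it is pure category theory plus the free-wheeled-PROP universal property.
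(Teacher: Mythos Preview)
Your proof is correct, but it takes a genuinely different route from the paper's. The paper argues abstractly via a solution-set trick: it indexes (up to isomorphism) all wheeled-PROP homomorphisms $\rho_i:{\mathcal A}\to R_i\otimes{\mathcal V}$ with $|R_i|\leq|{\mathcal A}|$, forms the product map $\rho=\prod_i\rho_i:{\mathcal A}\to(\prod_iR_i)\otimes{\mathcal V}$, and then takes $R$ to be the smallest subring of $\prod_iR_i$ with $\rho({\mathcal A})\subseteq R\otimes{\mathcal V}$. Your construction instead presents $R$ explicitly as a polynomial ring on coordinate variables modulo the ideal cut out by the relations of ${\mathcal A}$, using a chosen generating set ${\mathscr G}$ and a basis of each ${\mathcal V}^p_q$. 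The paper's argument is shorter and basis-free; yours is more concrete, shows at once that $R$ is finitely generated whenever ${\mathcal A}$ is, and is essentially the ``generic tensor'' construction that the paper itself deploys later in Section~\ref{sec7} to prove Theorem~\ref{embedding}. Both approaches hinge on the same point you correctly flagged: each ${\mathcal V}^p_q$ is finite-dimensional, so elements of $S\otimes{\mathcal V}^p_q$ have well-defined $S$-coordinates (and, in the paper's version, so that $(\prod_iR_i)\otimes{\mathcal V}^p_q\cong\prod_i(R_i\otimes{\mathcal V}^p_q)$).
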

\begin{proof}
Suppose that $\rho_i:{\mathcal A}\to R_i\otimes {\mathcal V}, i\in
I$ are, up to isomorphism, all homomorphisms of wheeled PROPs where $R_i$ is a commutative $K$-algebra whose cardinality
is at most the cardinality of ${\mathcal A}$ (this to ensure that
$I$ is still a set). Define
$$\rho=\prod_{i\in I}\rho_i:{\mathcal A}\to (\prod_{i\in
I}R_i)\otimes{\mathcal V}.$$ Define $R$ as the smallest subring of
$\prod_{i\in I}R_i$ such that $\rho({\mathcal A})\subseteq R\otimes
{\mathcal V}$. One can now easily show that $\rho:{\mathcal A}\to
R\otimes {\mathcal V}$ has the desired universal property. The
uniqueness of $R$ follows from the universal property, as usual.
\end{proof}

We define a functor $\Theta:\BBn\to\GLn$ as follows. For an object ${\mathcal A}$ in $\BBn$, define
$\Theta({\mathcal A})=R$, where $\rho:{\mathcal A}\to R\otimes
{\mathcal V}$ is as in Lemma~\ref{lem1}. We can define a regular
$\GL(V)$-action on $R$ as follows. The action of $\GL(V)$ on
${\mathcal V}$ corresponds to a map
$$
\gamma:{\mathcal V}\to K[\GL(V)]\otimes {\mathcal V}.
$$
If we tensor with $R$ we get a map
$$
\id\otimes \gamma:R\otimes {\mathcal V}\to R\otimes
K[\GL(V)]\otimes{\mathcal V}.
$$
The composition
$$
(\id\otimes \gamma)\circ \rho:{\mathcal A}\to R\otimes
K[\GL(V)]\otimes{\mathcal V}
$$
is a homomorphism of wheeled PROPs. The universal property
of $\rho:{\mathcal A}\to R\otimes {\mathcal V}$ implies that there
is a unique homomorphism $\mu:R\to R\otimes K[\GL(V)]$ such that
$(\id\otimes \gamma)\circ \rho=(\mu\otimes \id)\circ\rho$:
$$
\xymatrix{
{\mathcal A}\ar[r]^-\rho\ar[d]_-\rho & R\otimes {\mathcal V}\ar[d]^{\id\otimes \gamma} \\
R\otimes {\mathcal V}\ar@{.>}[r]_-{\psi\otimes \id} & R\otimes K[\GL(V)]\otimes {\mathcal V}}.
$$

The reader may check, using universality, that $\mu:R\to
R\otimes K[\GL(V)]\otimes R$ defines a regular {\it right\/} action of
$\GL(V)$ on $R$. In other words, if $\mu(f)=\sum_{i} f_i\otimes h_i$
then
$$
f\cdot g:= \sum_i f_ih_i(g),\quad g\in \GL(V)
$$
defines a right action of $\GL(V)$ on $R$. Of course, we may view
$R$ as an algebra with a regular {\it left\/} $\GL(V)$ action by
defining
$$
g\cdot f:=f\cdot g^{-1}.
$$
This shows that $R$ is a $\GL(V)$-algebra. Left and right
multiplication (on $R$ or on ${\mathcal V}$) by $g\in \GL(n)$ shall
be denoted by $L_g$ and $R_g$ respectively. The above reasoning
shows that
$$
(L_{g^{-1}}\otimes\id)\circ \rho=(R_g\otimes \id)\circ
\rho=(\id\otimes L_g)\circ \rho:{\mathcal A}\to R\otimes {\mathcal
V}
$$
and therefore
$$
(L_{g}\otimes L_g)\circ \rho=\rho.
$$
This shows that $\rho({\mathcal A})\subseteq (R\otimes {\mathcal
V})^{\GL(V)}$, where $\GL(V)$ acts on the left on $R$ and ${\mathcal
V}$.

Suppose that $\phi:{\mathcal A}\to {\mathcal B}$ is a homomorphism
of $n$-representable wheeled PROPs. Let $\rho_{A}:{\mathcal A}\to R\otimes
{\mathcal V}$ and $\rho_{B}:{\mathcal B}\to S\otimes {\mathcal V}$
be the universal maps as in Lemma~\ref{lem1}. Consider the
composition $\rho_B\circ \phi:{\mathcal A}\to S\otimes {\mathcal
V}$. There exists a unique ring homomorphism $\psi:R\to S$ such that
$(\psi\otimes \id)\circ \rho_A=\rho_B\circ \phi$:
$$
\xymatrix{{\mathcal A}\ar[r]^-{\rho_A}\ar[d]_{\phi} & R\otimes {\mathcal V} \ar@{.>}[d]^{\psi\otimes\id}\\
{\mathcal B}\ar[r]_-{\rho_B} & S\otimes {\mathcal V}}.
$$

Define $\Theta(\phi)=\psi$. The reader may check that $\Theta$
defines a functor.

\begin{thm}
$\Phi$ and $\Theta$ are each other inverses, i.e., the categories
$\GLn$ and $\BBn$ are equivalent.
\end{thm}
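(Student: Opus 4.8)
The plan is to produce natural isomorphisms $\Theta\circ\Phi\cong\mathrm{id}_{\GLn}$ and $\Phi\circ\Theta\cong\mathrm{id}_{\BBn}$, where $\Phi\colon\GLn\to\BBn$ is the functor $R\mapsto(R\otimes\mathcal{V})^{\GL(V)}$ and $\Theta\colon\BBn\to\GLn$ is the functor sending $\mathcal{A}$ to the algebra $R$ of Lemma~\ref{lem1}. Two inputs from the invariant theory of $\GL(V)$ do the work. First (fact~A), every finite-dimensional rational $\GL(V)$-representation is a direct summand of a finite sum of spaces $\mathcal{V}^p_q$; consequently a $\GL(V)$-algebra $R$ is generated, as a $K$-algebra, by the images of $\GL(V)$-equivariant maps $\mathcal{V}^p_q\to R$, equivalently by the matrix coordinates of the elements of $(R\otimes\mathcal{V})^{\GL(V)}$. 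Second (fact~B, the first fundamental theorem), $\bigl((V^\star)^{\otimes m}\otimes V^{\otimes\ell}\bigr)^{\GL(V)}=0$ unless $m=\ell$, in which case it is spanned by the complete matchings of the $V^\star$-factors with the $V$-factors. Combining facts A and B with the exactness of the invariants functor for the reductive group $\GL(V)$ yields the form of the FFT I actually need: if $R_0=\operatorname{Sym}\!\bigl(\bigoplus_k(\mathcal{V}^{p_k}_{q_k})^\star\bigr)$ is the coordinate ring of a (possibly infinite-dimensional) affine space of tuples of mixed tensors, with tautological invariants $u_k\in(R_0\otimes\mathcal{V}^{p_k}_{q_k})^{\GL(V)}$, then $(R_0\otimes\mathcal{V})^{\GL(V)}$ is precisely the sub-wheeled PROP of $R_0\otimes\mathcal{V}$ generated by the $u_k$ together with $\downarrow$; and for a $\GL(V)$-equivariant surjection $\pi\colon R_0\twoheadrightarrow R$, the map $\pi\otimes\mathrm{id}$ carries $(R_0\otimes\mathcal{V})^{\GL(V)}$ onto $(R\otimes\mathcal{V})^{\GL(V)}$.

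I would treat $\Phi\circ\Theta\cong\mathrm{id}$ first. Fix $\mathcal{A}$ in $\BBn$ and let $\rho\colon\mathcal{A}\to R\otimes\mathcal{V}$ be the universal homomorphism of Lemma~\ref{lem1}, so $R=\Theta(\mathcal{A})$ and, as shown in the text, $\rho(\mathcal{A})\subseteq(R\otimes\mathcal{V})^{\GL(V)}=\Phi(\Theta(\mathcal{A}))$. The map $\rho$ is injective: $\mathcal{A}$ is $n$-representable, so there is an embedding $j\colon\mathcal{A}\hookrightarrow S\otimes\mathcal{V}$, and the universal property yields $\psi$ with $(\psi\otimes\mathrm{id})\circ\rho=j$, forcing $\rho$ to be injective. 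For surjectivity onto the invariants, pick a generating set $\{a_k\}$ of the wheeled PROP $\mathcal{A}$ with $a_k$ of type $\binom{p_k}{q_k}$, form $R_0$ and the $u_k$ as above, and let $\pi\colon R_0\to R$ be the $\GL(V)$-equivariant homomorphism with $(\pi\otimes\mathrm{id})(u_k)=\rho(a_k)$; since $R$ is generated by the coordinates of $\rho(\mathcal{A})$ (the defining property of $R$ in Lemma~\ref{lem1}), $\pi$ is onto. Then $(R\otimes\mathcal{V})^{\GL(V)}=(\pi\otimes\mathrm{id})\bigl((R_0\otimes\mathcal{V})^{\GL(V)}\bigr)$ is the sub-wheeled PROP generated by $\{\rho(a_k)\}\cup\{\downarrow\}$, which is contained in $\rho(\mathcal{A})$; hence $\rho$ is an isomorphism $\mathcal{A}\xrightarrow{\,\sim\,}\Phi(\Theta(\mathcal{A}))$.

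For $\Theta\circ\Phi\cong\mathrm{id}$, fix a $\GL(V)$-algebra $R$, put $\mathcal{A}=\Phi(R)=(R\otimes\mathcal{V})^{\GL(V)}$ with its inclusion $\iota\colon\mathcal{A}\hookrightarrow R\otimes\mathcal{V}$, and let $R'=\Theta(\mathcal{A})$ with universal map $\rho\colon\mathcal{A}\to R'\otimes\mathcal{V}$. Applying the universal property of $\rho$ to the wheeled-PROP homomorphism $\iota$ produces a $\GL(V)$-equivariant algebra map $\psi\colon R'\to R$ with $(\psi\otimes\mathrm{id})\circ\rho=\iota$; I claim $\psi$ is an isomorphism, which identifies $\Theta(\Phi(R))=R'$ with $R$. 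It is surjective because $R$ is generated by the coordinates of $\iota(\mathcal{A})=(\psi\otimes\mathrm{id})(\rho(\mathcal{A}))$, which are $\psi$ applied to coordinates of $\rho(\mathcal{A})$, and the latter generate $R'$. It is injective because, by the previous paragraph, $\rho$ identifies $\mathcal{A}$ with $(R'\otimes\mathcal{V})^{\GL(V)}$, so the restriction of $\psi\otimes\mathrm{id}$ to $(R'\otimes\mathcal{V})^{\GL(V)}$ equals $\iota\circ\rho^{-1}$ and is injective; left-exactness of invariants applied to $0\to\ker(\psi)\otimes\mathcal{V}\to R'\otimes\mathcal{V}\to R\otimes\mathcal{V}$ then gives $\bigl(\ker(\psi)\otimes\mathcal{V}\bigr)^{\GL(V)}=0$, and if $\ker(\psi)$ were nonzero it would contain an irreducible rational submodule $U$ whose contragredient is, by fact~A, a summand of some $\mathcal{V}^a_b$, whence $\bigl(\ker(\psi)\otimes\mathcal{V}^a_b\bigr)^{\GL(V)}\supseteq\Hom_{\GL(V)}(U,U)\neq0$, a contradiction. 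Naturality of both isomorphisms in morphisms of $\GLn$ and $\BBn$ is a routine diagram chase from the universal properties (of Lemma~\ref{lem1} and of the invariants functor), and this completes the equivalence.

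The main obstacle is the ``FFT with parameters'' set up in the first paragraph: one must upgrade the classical first fundamental theorem for $\GL(V)$ on tuples of vectors and covectors to a description of $(R\otimes\mathcal{V})^{\GL(V)}$ for an arbitrary $\GL(V)$-algebra $R$, and recast it as ``$(R_0\otimes\mathcal{V})^{\GL(V)}$ is generated as a wheeled PROP by the tautological elements and $\downarrow$''. The reduction to the polynomial ring $R_0$ through a $\GL(V)$-equivariant presentation of $R$, together with exactness of invariants for the reductive group $\GL(V)$ to transport the conclusion along $\pi$, is the crux; everything else is bookkeeping with universal properties.
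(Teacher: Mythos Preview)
Your proof is correct and follows essentially the same route as the paper: injectivity of $\rho$ via the factorization through an embedding, surjectivity onto $(R\otimes\mathcal{V})^{\GL(V)}$ via the first fundamental theorem together with the Reynolds operator, and the deduction that $\psi\colon R'\to R$ is an isomorphism from the fact that every irreducible $\GL(V)$-module occurs in some $\mathcal{V}^p_q$. The only cosmetic difference is packaging: the paper writes an invariant $u$ directly as $\sum_i f_i(\rho(a_i))$ with $f_i\in\Hom(\mathcal{V}^{p_i}_{q_i},\mathcal{V}^p_q)$ and applies the Reynolds operator to the $f_i$, whereas you encode the same step by passing through the polynomial $\GL(V)$-algebra $R_0$ and invoking exactness of invariants along the surjection $R_0\twoheadrightarrow R$.
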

\begin{proof}
Suppose that ${\mathcal A}=\bigoplus_{p,q\in \N} {\mathcal A}^p_q$ is an $n$-representable wheeled PROP. So there exists an injective homomorphism
of wheeled PROPs $\lambda:{\mathcal A}\to S\otimes
{\mathcal V}$ for some commutative $K$-algebra $S$.
 Let $\rho:{\mathcal A}\to R\otimes {\mathcal V}$ be as
in Lemma~\ref{lem1}. By the universal property there exists a
$K$-algebra homomorphism $\mu:R\to S$ such that $(\mu\otimes
\id)\circ \rho=\lambda$. Since $\lambda$ is injective, $\rho$ is
injective. We have seen that $\rho({\mathcal A})\subseteq (R\otimes
{\mathcal V})^{\GL(V)}$. We claim that equality holds.  Let $T$ be
the subring generated by all
$\langle \rho(a),v\rangle$
where $a\in {\mathcal A}^p_q$ for some $p,q\in \N$, $v\in \V^q_p$ and
$\langle\cdot,\cdot\rangle$ is the bilinear pairing $\V^p_q\times
\V^q_p\to K$ which naturally extends to a pairing $(R\otimes
\V^p_q)\times \V^q_p\to R$. Clearly
$$
\langle \rho(a_1\otimes a_2),v_1\otimes v_2\rangle= \langle
\rho(a_1),v_1\rangle \langle \rho(a_2),v_2\rangle$$ so $T$ is the
{\it $K$-vector space\/} spanned by all $\langle \rho(a),v\rangle$,
with $a\in {\mathcal A}^p_q$ and $v\in \V^q_p$ and $p,q\in \N$. We have
$\rho({\mathcal A})\subseteq T\otimes {\mathcal V}$. From the
universal property  of $\rho:{\mathcal A}\to R\otimes {\mathcal V}$ follows that $T=R$.

Suppose that $u\in (R\otimes \V^p_q)^{\GL(V)}$ for some $p,q\in \N$.
Then there exist $a_i\in {\mathcal A}^{p_i}_{q_i}$, $v_i\in {\mathcal V}^{q_i}_{p_i}$ and $w_i\in \V^p_q$
such that $u=\sum_{i=1}^r\langle \rho(a_i),v_i\rangle w_i$.
Let $f_i=v_i\otimes w_i\in
\Hom(\V^{p_i}_{q_i},\V^p_q)$. Then we have
$$
u=\sum_{i=1}^r f_i(\rho(a_i)).
$$
Consider the action of $\GL(V)$. The elements $u$, and $\rho(a_i)$,
$i=1,2,\dots,r$ are $\GL(V)$-invariant, but $f_1,\dots,f_r$ may not
be. By applying the Reynolds operator on both sides, we may assume
that $f_1,\dots,f_r$ are $\GL(V)$-invariant as well. Now $f_i$ is a
$\GL(V)$-invariant tensors in
$$
\Hom(\V^{p_i}_{q_i},\V^p_q)\cong \V^{q_i}_{p_i}\otimes \V^p_q\cong
\V^{p+q_i}_{q+p_i}.
$$
The first fundamental theorem in invariant theory for $\GL(V)$ tells
us exacly how such a tensor looks like. This means that $f_i$ as a
linear map, is a composition of contractions and tensoring with the
identity in $V^\star\otimes V$. But $\rho({\mathcal A})$ is closed
under contractions and tensoring with the identity. This shows that
$f_i(\rho(a_i))$ lies in the image of $\rho$. But then $u$ lies in
the image of $\rho$. So $\rho$ defines an isomorphism between
${\mathcal A}$ and $(R\otimes {\mathcal
V})^{\GL(V)}=\Phi(\Theta({\mathcal A}))$. We leave it to the reader
to show that $\Phi\circ\Theta$ is naturally equivalent to identity
functor.

Suppose that $R$ is a $\GL(V)$-algebra.  Let ${\mathcal
A}=\Phi(R)=(R\otimes {\mathcal V})^{\GL(V)}$ and let
$\widetilde{R}=\Theta({\mathcal A})=\Theta(\Phi(R))$. Define
$\rho:{\mathcal A}\to \widetilde{R}\otimes {\mathcal V}$
as in Lemma~\ref{lem1}. Consider the inclusion $\lambda:(R\otimes
{\mathcal V})^{\GL(V)}\to R\otimes {\mathcal V}$. The universal
property implies that there exists a $K$-algebra homomorphism
$\psi:\widetilde{R}\to R$ such that $(\psi\otimes\id)\circ
\rho=\lambda$. Now $\psi\otimes\id:\widetilde{R}\otimes {\mathcal
V}\to R\otimes {\mathcal V}$ restricts to a map of wheeled PROPs
$$
\psi':(\widetilde{R}\otimes {\mathcal V})^{\GL(V)}\to (R\otimes
{\mathcal V})^{\GL(V)}={\mathcal A}
$$
On the other hand, we have seen before that $\rho:{\mathcal A}\to
(\widetilde{R}\otimes {\mathcal V})$ induces an isomorphism
$\rho':{\mathcal A}\to (\widetilde{R}\otimes {\mathcal
V})^{\GL(V)}$. We have $(\psi\otimes\id)\circ\rho=\lambda$ and if we
restrict the codomain to $(R\otimes {\mathcal V})^{\GL(V)}$, then we
get $\psi'\circ \rho'=\id$. Since $\rho'$ is an isomorphism, so is
$\psi'$.
 Since all
irreducible $\GL(V)$-representations appear in the $\V^p_q$, we must
have that $\psi$ is already an isomorphism. So $\Theta(\Psi(R))\cong
R$. Again, we leave it to the reader to verify that the composition
functor $\Theta\circ \Psi$ is naturally equivalent to the identity.

\end{proof}


\begin{prop}\label{prop1}
Suppose $R$ is a $\GL(V)$-algebra and ${\mathcal
A}=\Phi(R)=(R\otimes {\mathcal V})^{\GL(V)}$. Let $\mathcal{I}$ be an ideal of $\mathcal{A}$. Then there exists a
$\GL(V)$-stable ideal $I$ of $R$ such that ${\mathcal I}=(I\otimes
\V)^{\GL(V)}$.
\end{prop}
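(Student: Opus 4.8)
The plan is to produce the ideal $I$ of $R$ directly from $\mathcal{I}$ by "extracting coefficients", exactly as in the surjectivity argument of the preceding theorem. First I would set
$$
I := \text{the } K\text{-span of all } \langle b, v\rangle \text{ with } b \in \mathcal{I}^p_q,\ v \in \V^q_p,\ p,q \in \N,
$$
where $\langle\cdot,\cdot\rangle$ denotes the pairing $(R\otimes \V^p_q)\times \V^q_p \to R$ used in the proof of the equivalence theorem. The first task is to check $I$ is an ideal of $R$: since $\mathcal{A} = (R\otimes \V)^{\GL(V)}$ and the subring $T$ of the previous proof is all of $R$ (spanned by $\langle \rho(a),v\rangle$, $a\in\mathcal{A}$), any element of $R$ is an $R$-linear — in fact $K$-linear — combination of such pairings $\langle a, w\rangle$ with $a\in\mathcal{A}$; then $\langle a, w\rangle\cdot\langle b,v\rangle$ can be rewritten, using $\langle a\otimes b, w\otimes v\rangle = \langle a,w\rangle\langle b,v\rangle$ and the fact that $a\otimes b \in \mathcal{I}$ (as $\mathcal{I}$ is an ideal of $\mathcal{A}$), as a pairing of an element of $\mathcal{I}$ with a tensor, hence lies in $I$. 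So $I$ absorbs multiplication by the generators of $R$, hence is an ideal. It is $\GL(V)$-stable because $\mathcal{I}^p_q$ is $\GL(V)$-stable (being a graded piece of $\mathcal{A}$, which consists of invariants, the pairing is equivariant and sends $\GL(V)$-submodules to $\GL(V)$-submodules of $R$).

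The heart of the argument is showing $\mathcal{I} = (I\otimes \V)^{\GL(V)}$. The inclusion $\mathcal{I}\subseteq (I\otimes\V)^{\GL(V)}$: each $b\in\mathcal{I}^p_q$ can be written as $b = \sum_i \langle b, v_i\rangle \otimes w_i$ for suitable $v_i\in\V^q_p$, $w_i\in\V^p_q$ (a dual-basis expansion in the finite-dimensional factor $\V^p_q$), and each coefficient $\langle b,v_i\rangle$ lies in $I$ by definition, so $b \in I\otimes\V^p_q$; invariance of $b$ is given. For the reverse inclusion, take $u\in (I\otimes\V^p_q)^{\GL(V)}$. Writing elements of $I$ as $K$-combinations of pairings $\langle b_i, v_i\rangle$ with $b_i\in\mathcal{I}^{p_i}_{q_i}$, we get $u = \sum_i f_i(b_i)$ where $f_i = v_i\otimes w_i \in \Hom(\V^{p_i}_{q_i},\V^p_q) \cong \V^{q_i}_{p_i}\otimes\V^p_q$. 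Applying the Reynolds operator (we are in characteristic $0$ and $\GL(V)$ is reductive) to both sides, and using that $u$ and each $b_i$ are already $\GL(V)$-invariant, we may assume each $f_i$ is a $\GL(V)$-invariant tensor. By the first fundamental theorem for $\GL(V)$, $f_i$ as a linear map is a composition of partial traces and tensoring with $\id\in V^\star\otimes V$. Since $\mathcal{I}$ is an ideal of $\mathcal{A}$ — closed under $\partial^i_j$ and under $\otimes$ with anything in $\mathcal{A}$, in particular with $\id$ — it follows that $f_i(b_i)\in\mathcal{I}$, hence $u = \sum_i f_i(b_i)\in\mathcal{I}$. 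This is essentially a verbatim repetition of the corresponding step in the proof of the equivalence theorem, now carried out with $\mathcal{I}$ in place of $\rho(\mathcal{A})$.

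The step I expect to be the main obstacle is the careful bookkeeping that $I$ is genuinely an ideal of $R$ (not merely a $\GL(V)$-submodule), i.e.\ that multiplying a pairing coming from $\mathcal{I}$ by an \emph{arbitrary} element of $R$ stays in $I$. This requires knowing that $R$ is spanned by pairings $\langle a, w\rangle$ with $a\in\mathcal{A}$ — which is precisely the content of the "$T = R$" portion of the equivalence theorem's proof — and then invoking that $\mathcal{A}$ acts on $\mathcal{I}$ by $\otimes$ on both sides. Once that is in hand, the rest is the FFT-plus-Reynolds argument already rehearsed above, so the proof is short. I would phrase it as: define $I$; note $I$ is a $\GL(V)$-stable ideal by the spanning property of $R$ together with the ideal axioms for $\mathcal{I}$; then prove the two inclusions, the first by dual-basis expansion and the second by the Reynolds/FFT argument identical to the one in the equivalence theorem.
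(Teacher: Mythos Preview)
Your proof is correct and follows essentially the same strategy as the paper: produce the ideal $I$ from the coefficients of elements of $\mathcal I$, and then establish $(I\otimes\V)^{\GL(V)}\subseteq\mathcal I$ by the Reynolds-operator-plus-FFT argument, which is verbatim the paper's argument. The only cosmetic difference is that the paper defines $I$ as ``the smallest ideal with $\mathcal I\subseteq (I\otimes\V)^{\GL(V)}$'' and then asserts without further comment that every $u\in I\otimes\V^p_q$ can be written as $\sum_i f_i(a_i)$ with $a_i\in\mathcal I$ and $f_i\in\Hom(\V^{p_i}_{q_i},\V^p_q)$; your explicit construction of $I$ as the $K$-span of pairings, together with the verification that this span is already an ideal, is precisely what justifies that assertion. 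One small caution: your appeal to ``$T=R$'' is legitimate here, but only because the equivalence theorem already established $\Theta(\Phi(R))\cong R$ for an \emph{arbitrary} $\GL(V)$-algebra $R$, so the $R$ of this proposition is (up to isomorphism) the universal one; make sure you cite that direction of the equivalence rather than the $\Phi\Theta$ direction.
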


The previous proposition shows there is a bijection between
$\GL(V)$-stable ideals of $R$ and ideals of ${\mathcal A}=(R\otimes
V)^{\GL(V)}$. If ${\mathcal A}$ is a wheeled PROP and
${\mathcal I}\subseteq {\mathcal A}$ is an ideal, then ${\mathcal
A}/{\mathcal I}$ has a natural structure of a wheeled PROP
because it can be identified with the image of some homomorphism
$\phi:{\mathcal A}\to {\mathcal B}$ of a wheeled PROP.

\begin{remark}
If ${\mathcal A}=(R\otimes {\mathcal V})^{\GL(V)}$ is an $n$-representable wheeled PROP, and ${\mathcal I}\subseteq {\mathcal A}$ is an
ideal, then ${\mathcal A}/{\mathcal I}$ is a wheeled PROP.
Proposition~\ref{prop1} implies that ${\mathcal I}=(I\otimes
{\mathcal V})^{\GL(V)}$ for some $\GL(V)$-stable ideal $I$. If we
apply $\Phi$ to $R/I$ we get
$$
((R/I)\otimes {\mathcal V})^{\GL(V)}\cong (R\otimes {\mathcal
V})^{\GL(V)}/(I\otimes {\mathcal V})^{\GL(V)}={\mathcal A}/{\mathcal
I}.
$$
This shows that ${\mathcal A}/{\mathcal I}\cong \Theta(R/I)$ is an $n$-representable wheeled PROP as well. So $\BBn$ is closed under homomorphic images.
\end{remark}

\begin{proof}[Proof of Proposition~\ref{prop1}]
Let $I\subseteq R$ be the smallest ideal such that
$$
{\mathcal I}\subseteq (I\otimes R)^{\GL(V)}.
$$
We will prove that equality holds.

Suppose that $u\in (I\otimes \V^p_q)^{\GL(V)}$ for some $p,q\in \N$.
Then there exist $a_i\in {\mathcal I}^{p_i}_{q_i}$,
 $f_i\in
\Hom(\V^{p_i}_{q_i},\V^p_q)$ for $i=1,2,\dots,r$ such that
$$
u=\sum_{i=1}^r f_i(a_i)
$$
in $I\otimes {\mathcal V}$.
The elements $u$, and $a_i$,
$i=1,2,\dots,r$ are $\GL(V)$-invariant, but $f_1,\dots,f_r$ may not
be. By applying the Reynolds operator on both sides, we may assume
that $f_1,\dots,f_r$ are $\GL(V)$-invariant as well. Now $f_i$ is a
$\GL(V)$-invariant tensors in
$$
\Hom(\V^{p_i}_{q_i},\V^p_q)\cong \V^{q_i}_{p_i}\otimes \V^p_q\cong
\V^{p+q_i}_{q+p_i}.
$$
Again, the first fundamental theorem in invariant theory for
$\GL(V)$ implies that $f_i$, as a linear map, is a composition of
contractions and tensoring with the identity in $V^\star\otimes V$.
The ideal ${\mathcal I}$ is closed under contractions and tensoring
with the identity. This shows that $f_i(a_i)$ lies in ${\mathcal I}$
for all $i$. So $u$ lies in ${\mathcal I}$.

\end{proof}

\section{Subalgebras of the mixed tensor algebra}\label{sec6}
For $K = \C$, the mixed tensor algebra was studied by Schrijver in \cite{Schrijver08}, where he shows that a subset $A \subseteq \V$ is of the form $\V^G$ for some subgroup $G$ of the unitary group if and only if $A$ is a contraction closed nondegenerate graded subalgebra of $\V$, closed under $\ast$. 

We prove a generalization of this over an arbitrary field of characteristic $0$. 

\begin{definition}
A wheeled PROP $\mathcal{A}$ is simple if it has exactly $2$ ideals, namely the zero ideal and $\mathcal{A}$ itself.
\end{definition} 
Suppose ${\mathcal M}$ is an ideal of ${\mathcal A}$.
It is clear from the definitions that  ${\mathcal A}/{\mathcal M}$ is simple if and only if ${\mathcal M}$ is a maximal ideal.
In particular, ${\mathcal A}$ is simple if and only if the zero ideal is maximal.

\begin{theorem} \label{Schrijver-generalization}
There is a bijection between simple subalgebras of ${\mathcal V}$
and Zariski closed reductive subgroups $G\subseteq \GL(V)$ which are
defined over $K$.
\end{theorem}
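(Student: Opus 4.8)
The plan is to construct the bijection in both directions and then check that they are mutually inverse. In one direction, given a Zariski closed reductive subgroup $G \subseteq \GL(V)$ defined over $K$, send $G$ to $\mathcal{V}^G = \bigoplus_{p,q} (\V^p_q)^G$. One first checks this is a subalgebra of $\mathcal{V}$: it contains $1$ and $\id$ (as $\id$ is $\GL(V)$-equivariant, hence $G$-invariant), and it is closed under $\otimes$ and all $\partial^i_j$ since these are $\GL(V)$-equivariant maps, hence carry $G$-invariants to $G$-invariants. The key point is that $\mathcal{V}^G$ is \emph{simple}: by the equivalence of categories of Section~\ref{sec5} (Theorem in Section~\ref{sec5}), with $R = K$ carrying the trivial $\GL(V)$-action twisted appropriately — more precisely, one works with $\mathcal{A} = \mathcal{V}^G = (K[\GL(V)/G]$-type constructions$)$ — ideals of $\mathcal{V}^G$ correspond to $\GL(V)$-stable ideals of the relevant coordinate ring via Proposition~\ref{prop1}. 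Concretely: take $R = K[\GL(V)/G]$ (valid since $G$ is reductive, so the quotient is affine), which is a $\GL(V)$-algebra, and observe $(R \otimes \mathcal{V})^{\GL(V)} \cong \mathcal{V}^G$ by a standard shift/averaging argument; then ideals of $\mathcal{V}^G$ biject with $\GL(V)$-stable ideals of $K[\GL(V)/G]$, of which there are exactly two (zero and the whole ring) precisely because $\GL(V)/G$ is a single orbit, i.e. $\GL(V)$ acts transitively. Hence $\mathcal{V}^G$ has exactly two ideals and is simple.

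In the other direction, given a simple subalgebra $\mathcal{A} \subseteq \mathcal{V}$, I want to produce a reductive group $G$ with $\mathcal{A} = \mathcal{V}^G$. Since $\mathcal{A}$ is in particular an $n$-representable wheeled PROP (it embeds in $\mathcal{V} = K \otimes \mathcal{V}$), apply the functor $\Theta$ of Section~\ref{sec5} to get a $\GL(V)$-algebra $R = \Theta(\mathcal{A})$ with $\mathcal{A} \cong (R \otimes \mathcal{V})^{\GL(V)}$, via the universal map $\rho : \mathcal{A} \to R \otimes \mathcal{V}$. Because $\mathcal{A}$ is simple, Proposition~\ref{prop1} (its bijection between ideals of $\mathcal{A}$ and $\GL(V)$-stable ideals of $R$) forces $R$ to have exactly two $\GL(V)$-stable ideals, so $\operatorname{Spec} R$ is a single $\GL(V)$-orbit. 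The inclusion $\mathcal{A} \hookrightarrow \mathcal{V}$ gives, by the universal property of Lemma~\ref{lem1}, a $\GL(V)$-equivariant map $R \to K$ (a $K$-point of $\operatorname{Spec} R$), so $\operatorname{Spec} R$ is a single orbit \emph{containing a $K$-point}; its stabilizer is a Zariski closed subgroup $G \subseteq \GL(V)$ defined over $K$, and $\operatorname{Spec} R \cong \GL(V)/G$. Since $R$ is (as one checks) finitely generated and $\GL(V)/G$ is affine, a theorem of Matsushima forces $G$ to be reductive. Finally $\mathcal{A} \cong (R \otimes \mathcal{V})^{\GL(V)} \cong (K[\GL(V)/G] \otimes \mathcal{V})^{\GL(V)} \cong \mathcal{V}^G$ as subalgebras of $\mathcal{V}$, where the last isomorphism uses the first fundamental theorem for $\GL(V)$ exactly as in the proof of the equivalence of categories in Section~\ref{sec5}.

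That the two assignments are inverse follows from the equivalence of categories: $G \mapsto \mathcal{V}^G \mapsto \Theta(\mathcal{V}^G) = K[\GL(V)/G]$ recovers $G$ as the stabilizer of the identity coset, and $\mathcal{A} \mapsto R \mapsto \mathcal{V}^{\operatorname{Stab}} $ recovers $\mathcal{A}$ since $\Phi \circ \Theta \cong \id$. One should also verify the assignment $G \mapsto \mathcal{V}^G$ is injective directly: if $\mathcal{V}^{G_1} = \mathcal{V}^{G_2}$ then since all irreducible $\GL(V)$-representations occur among the $\V^p_q$ and $G$ is determined by the collection of its invariant subspaces in every rational representation (for $G$ reductive and Zariski closed), we get $G_1 = G_2$.

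\textbf{Main obstacle.} The delicate step is the passage from ``$R$ has exactly two $\GL(V)$-stable ideals'' to ``$\operatorname{Spec} R$ is a single orbit isomorphic to $\GL(V)/G$ for a \emph{reductive} $G$ defined over $K$.'' One must control finite generation of $R = \Theta(\mathcal{A})$ (so that $\operatorname{Spec} R$ is a genuine variety of finite type and Matsushima's criterion applies), verify that the orbit is closed and reduced, and handle the field-of-definition subtleties since $K$ need not be algebraically closed in applications even though the paper fixes $K = \bar K$ in Section~\ref{sec2} — here the hypothesis ``defined over $K$'' is exactly what makes the correspondence clean. Establishing that $\mathcal{V}^G$ is simple (equivalently, that the zero ideal is maximal, equivalently that $\GL(V)$ acts transitively on $\operatorname{Spec} K[\GL(V)/G]$) is conceptually the heart, but it is essentially immediate once the dictionary of Proposition~\ref{prop1} is in place.
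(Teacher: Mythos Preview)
Your overall strategy matches the paper's: use the equivalence $\Phi\leftrightarrow\Theta$ between $\GL(V)$-algebras and $n$-representable wheeled PROPs, identify $\mathcal{V}$ with $\Phi(K[\GL(V)])$, and appeal to Matsushima's criterion for the reductivity of the stabilizer. The main structural difference is in how you verify that $\mathcal{V}^G$ is simple: you invoke Proposition~\ref{prop1} (the ideal correspondence) applied to $R=K[\GL(V)/G]$, whereas the paper instead uses Proposition~\ref{prop2}, the nondegenerate-pairing criterion, together with linear reductivity of $G$ to produce, for each nonzero $u\in(\mathcal{V}^p_q)^G$, an invariant $v\in(\mathcal{V}^q_p)^G$ with $\langle u,v\rangle=1$. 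Both routes are valid; yours is a bit more categorical, the paper's a bit more hands-on.

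The one point you correctly flag as delicate but leave as ``as one checks'' is exactly where the paper supplies a real argument: finite generation of $R=\Theta(\mathcal{A})$. The paper's trick is to let $J\subseteq R$ be the set of all $f$ such that the localization $R_f$ is finitely generated (together with $0$); one checks $J$ is a nonzero $\GL(V)$-stable ideal, hence $J=R$, so $1\in J$ and $R=R_1$ is finitely generated. Without this step Matsushima's criterion does not apply and the passage to an affine homogeneous space $\GL(V)/G$ is not justified, so you should incorporate this argument (or an equivalent one) rather than leave it as an assertion. Apart from that, your proposal is complete and aligns with the paper.
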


We refer the reader to \cite{Humphreys, Springer} for details on algebraic groups and \cite{DK, PV} for invariant theory.

\begin{lemma}\label{lem2}
$$
\Phi(K[\GL(V)])\cong {\mathcal V}
$$
\end{lemma}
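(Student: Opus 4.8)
The plan is to identify $\Phi(K[\GL(V)])$ with $(K[\GL(V)] \otimes {\mathcal V})^{\GL(V)}$ and then recognize this as ${\mathcal V}$ via the standard fact that for a rational $G$-module $W$, the invariants $(K[G] \otimes W)^G$ (with $G$ acting diagonally, by right translation on $K[G]$) are canonically isomorphic to $W$ itself. First I would recall that $K[\GL(V)]$ carries the regular action of $\GL(V)$ by, say, right translation, and that this makes $K[\GL(V)]$ a $\GL(V)$-algebra, so $\Phi(K[\GL(V)]) = (K[\GL(V)] \otimes {\mathcal V})^{\GL(V)}$ is defined. The key map is $\ev_e \otimes \id : K[\GL(V)] \otimes {\mathcal V} \to K \otimes {\mathcal V} = {\mathcal V}$, evaluation at the identity $e \in \GL(V)$ in the first factor. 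I would check that when restricted to the $\GL(V)$-invariants, this is an isomorphism of wheeled PROPs.

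The main steps are as follows. First, $\ev_e \otimes \id$ is a homomorphism of wheeled PROPs, since $\ev_e: K[\GL(V)] \to K$ is a $K$-algebra homomorphism (the counit of the Hopf algebra, or just evaluation at a point) and tensoring a $K$-algebra homomorphism of commutative rings with $\id_{\mathcal V}$ gives a homomorphism of wheeled PROPs $R \otimes {\mathcal V} \to S \otimes {\mathcal V}$, as already used throughout Section~\ref{sec5}. Second, I would show the restriction $(\ev_e \otimes \id)|_{(K[\GL(V)] \otimes {\mathcal V})^{\GL(V)}}$ is injective: if $u \in (K[\GL(V)] \otimes {\mathcal V})^{\GL(V)}$ satisfies $(\ev_e \otimes \id)(u) = 0$, then $\GL(V)$-invariance forces $(\ev_g \otimes \id)(u) = 0$ for all $g \in \GL(V)$ — because translating $u$ by $g$ relates $\ev_g$ to $\ev_e$ — and since the functions $\ev_g$ separate points of $K[\GL(V)]$, we get $u = 0$. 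Third, for surjectivity: given $w \in {\mathcal V}^p_q$, I would produce its "orbit map'' $\widetilde w \in K[\GL(V)] \otimes {\mathcal V}^p_q$ defined by $\widetilde w(g) = g \cdot w$ (more precisely, using the comodule map $\gamma: {\mathcal V}^p_q \to K[\GL(V)] \otimes {\mathcal V}^p_q$, but twisted so as to land in the invariants for the diagonal action), check it is $\GL(V)$-invariant for the diagonal action, and check $(\ev_e \otimes \id)(\widetilde w) = w$. This is the classical averaging/coaction construction and is where one must be slightly careful about which translation action on $K[\GL(V)]$ one uses (left vs.\ right), matching the conventions fixed in Section~\ref{sec5}; the upshot is the composite $K[\GL(V)] \otimes {\mathcal V} \xrightarrow{\ev_e \otimes \id} {\mathcal V} \xrightarrow{\;\widetilde{(\cdot)}\;} (K[\GL(V)] \otimes {\mathcal V})^{\GL(V)}$ is inverse to the restriction of $\ev_e \otimes \id$.

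Alternatively — and this may be the cleanest route to write up — I would deduce the lemma directly from the equivalence of categories just established. Since $\Theta$ and $\Phi$ are mutually inverse equivalences between $\GLn$ and $\BBn$, and $\Theta(\Phi(R)) \cong R$ for every $\GL(V)$-algebra $R$, it suffices to exhibit a $\GL(V)$-algebra $R$ with $\Theta({\mathcal V}) \cong R$ and $\Phi(R) \cong {\mathcal V}$; but in fact one can argue that $\Theta({\mathcal V}) \cong K[\GL(V)]$ by analyzing the universal map $\rho: {\mathcal V} \to R \otimes {\mathcal V}$ of Lemma~\ref{lem1} for ${\mathcal A} = {\mathcal V}$ itself: the identity map ${\mathcal V} \to K \otimes {\mathcal V}$ is tautologically a homomorphism, and the coaction ${\mathcal V} \to K[\GL(V)] \otimes {\mathcal V}$ is another, and one shows $K[\GL(V)]$ is the universal (smallest) such $R$ because the matrix coefficients $\langle \rho(a), v\rangle$ generate exactly $K[\GL(V)]$ — this is precisely the statement that $\GL(V)$ acts faithfully on ${\mathcal V}$ and all its matrix coefficients appear. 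Applying $\Phi$ then gives $\Phi(\Theta({\mathcal V})) \cong {\mathcal V}$, i.e.\ $\Phi(K[\GL(V)]) \cong {\mathcal V}$.

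The main obstacle is bookkeeping around the two group actions on $K[\GL(V)]$ (left and right translation) and making sure the diagonal action whose invariants we take is the one for which the evaluation-at-$e$ map trivializes — the same left/right subtlety that already appears in the construction of $\Theta$ in Section~\ref{sec5}. Once the conventions are pinned down, the injectivity is a separation-of-points argument and the surjectivity is the classical fact $(K[G]\otimes W)^G \cong W$ for rational $G$-modules $W$; neither involves the first fundamental theorem or any deep input, so I expect the proof to be short modulo this convention-matching.
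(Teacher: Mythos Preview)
Your proposal is correct. Your first approach --- evaluate at $e$ for one direction and use the coaction/orbit map $w\mapsto\widetilde w$ for the other --- proves the same classical identity $(K[G]\otimes W)^G\cong W$ that the paper proves, but the paper packages it differently: rather than checking injectivity and surjectivity separately on the invariant subspace, it writes down a single map
\[
(\Delta^\star\otimes\id)\circ(\id\otimes\gamma):K[\GL(V)]\otimes\mathcal V\longrightarrow K[\GL(V)]\otimes\mathcal V
\]
(where $\Delta^\star$ is multiplication in $K[\GL(V)]$ and $\gamma$ is the coaction on $\mathcal V$), observes that this is a $\GL(V)$-equivariant automorphism of wheeled PROPs intertwining the diagonal action on the source with the action on the first factor alone on the target, and then takes invariants on both sides, using $K[\GL(V)]^{\GL(V)}=K$. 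Your orbit map $\widetilde w=\gamma(w)$ is precisely the restriction of this automorphism (or its inverse, depending on sign conventions) to $1\otimes\mathcal V$, so the two arguments are the same under the hood; the paper's global-automorphism phrasing just sidesteps the left/right translation bookkeeping that you correctly flag as the only delicate point. Your alternative route via the equivalence of categories is also legitimate and not circular, since the equivalence is established in Section~\ref{sec5} before this lemma, though verifying $\Theta(\mathcal V)\cong K[\GL(V)]$ directly is comparable in effort to the first argument.
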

\begin{proof}
 Suppose that the action of $\GL(V)$ on
${\mathcal V}$ is given by
$$
\gamma:{\mathcal V}\to K[\GL(V)]\otimes {\mathcal V}.
$$
 Let $\Delta^\star:K[\GL(V)]\otimes K[\GL(V)]\to
K[\GL(V)]$ be given by $f\otimes h\mapsto fh$.
The composition
$$
(\Delta^\star\otimes \id)\circ(\id\otimes \gamma): K[\GL(V)]\otimes
{\mathcal V}\to K[\GL(V)]\otimes K[\GL(V)]\otimes {\mathcal V}\to
K[\GL(V)]\otimes {\mathcal V}
$$
is an $\GL(V)$-equivariant isomorphism
$$
K[\GL(V)]\otimes {\mathcal V}\to K[\GL(V)]\otimes {\mathcal V}.
$$
of wheeled PROPs.

 Here $\GL(V)$ acts on the left-hand $K[\GL(V)]\otimes {\mathcal V}$ by
acting on $K[\GL(V)]$ and on ${\mathcal V}$ as usual. But $\GL(V)$
acts on the right-hand $K[\GL(V)]\otimes {\mathcal V}$ by acting as
usual on $K[\GL(V)]$ and acting trivially on ${\mathcal V}$. Taking
$\GL(V)$-invariants gives us an isomorphism
$$
\Phi(K[\GL(V)])=(K[\GL(V)]\otimes {\mathcal V})^{\GL(V)}\cong
{\mathcal V}.
$$
\end{proof}
\begin{corollary}
There is a bijection between $\GL(V)$-stable subalgebras of
$K[\GL(V)]$ and subalgebras of ${\mathcal V}$.
\end{corollary}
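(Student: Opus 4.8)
The plan is to obtain the corollary as the case $R = K[\GL(V)]$ of a general sub-object statement for the functor $\Phi$, in the same spirit as Proposition~\ref{prop1} (which treated ideals rather than subalgebras), combined with the isomorphism $\Phi(K[\GL(V)]) \cong {\mathcal V}$ of Lemma~\ref{lem2}. Concretely, I would first prove: for any $\GL(V)$-algebra $R$, setting ${\mathcal A} := \Phi(R) = (R\otimes{\mathcal V})^{\GL(V)}$, the assignments
$$
R' \longmapsto (R'\otimes{\mathcal V})^{\GL(V)} \qquad\text{and}\qquad {\mathcal B} \longmapsto R[{\mathcal B}]
$$
are mutually inverse bijections between the $\GL(V)$-stable $K$-subalgebras $R'$ of $R$ and the sub-wheeled PROPs ${\mathcal B}$ of ${\mathcal A}$; here $R[{\mathcal B}]$ denotes the $K$-subalgebra of $R$ generated by all matrix entries $\langle b,v\rangle \in R$ with $b\in{\mathcal B}^p_q$ and $v\in\V^q_p$, relative to the pairing $(R\otimes\V^p_q)\times\V^q_p\to R$ from the proof of the equivalence theorem.

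Both assignments are easily seen to be well defined. For the first, $R'\otimes{\mathcal V}$ is a $\GL(V)$-stable sub-wheeled PROP of the wheeled PROP $R\otimes{\mathcal V}$ --- it contains $1_R\otimes 1_{\mathcal V}$ and $1_R\otimes\downarrow$, and is closed under $\otimes$ and each $\partial^i_j$ --- so passing to invariants produces a sub-wheeled PROP of ${\mathcal A}$. For the second, every $b\in{\mathcal B}^p_q$ lies in $(R\otimes\V^p_q)^{\GL(V)}$ and is therefore $\GL(V)$-invariant, so $g\cdot\langle b,v\rangle = \langle b, g\cdot v\rangle$; since $\V^q_p$ is finite dimensional, the $\GL(V)$-orbit of each generator of $R[{\mathcal B}]$ spans a finite-dimensional subspace of $R[{\mathcal B}]$, and hence $R[{\mathcal B}]$ is a $\GL(V)$-stable subalgebra of $R$, with rational action since it is a submodule of $R$. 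One further observation is used repeatedly: $R[{\mathcal B}]$ is actually the $K$-linear span of the matrix entries $\langle b,v\rangle$, because products of such entries collapse, $\langle b,v\rangle\langle b',v'\rangle = \langle b\otimes b',\,v\otimes v'\rangle$, and ${\mathcal B}$ is closed under $\otimes$.

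Mutual inverseness splits into two checks. First, for a $\GL(V)$-stable subalgebra $R'$, the identity $R\bigl[(R'\otimes{\mathcal V})^{\GL(V)}\bigr] = R'$ is exactly the ``$T=R$'' step in the proof of the equivalence theorem, applied to the $\GL(V)$-algebra $R'$: it states that $R'$ is generated by the matrix entries of its own invariant tensors. Second, for a sub-wheeled PROP ${\mathcal B}\subseteq{\mathcal A}$, one must show $(R[{\mathcal B}]\otimes{\mathcal V})^{\GL(V)} = {\mathcal B}$. The inclusion ${\mathcal B}\subseteq (R[{\mathcal B}]\otimes{\mathcal V})^{\GL(V)}$ is immediate, since the coefficients of any $b\in{\mathcal B}^p_q$ in a basis of $\V^p_q$ are matrix entries of $b$ and so lie in $R[{\mathcal B}]$. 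The reverse inclusion is the substantive step, and it is proved just as in the equivalence theorem and in Proposition~\ref{prop1}: given $u\in (R[{\mathcal B}]\otimes\V^p_q)^{\GL(V)}$, expanding each coefficient of $u$ in the spanning set $\{\langle b,v\rangle\}$ and collecting terms writes $u = \sum_i f_i(b_i)$ with $b_i\in{\mathcal B}^{p_i}_{q_i}$ and $f_i\in\Hom(\V^{p_i}_{q_i},\V^p_q)$; the Reynolds operator makes the $f_i$ themselves $\GL(V)$-invariant without disturbing this equation, since $u$ and the $b_i$ already are, and the first fundamental theorem for $\GL(V)$ then realizes each $f_i$, as a linear map, as a composite of contractions and tensorings with $\id\in V^\star\otimes V$ --- operations under which ${\mathcal B}$ is closed --- so that $u\in{\mathcal B}$. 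Taking $R = K[\GL(V)]$ and transporting along $\Phi(K[\GL(V)])\cong{\mathcal V}$ of Lemma~\ref{lem2} then gives the corollary.

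The main obstacle is this last reverse inclusion, $(R[{\mathcal B}]\otimes{\mathcal V})^{\GL(V)}\subseteq{\mathcal B}$, but it is the same first fundamental theorem plus closure argument the paper has already run twice, so I expect no new difficulty; the genuinely new content is merely the bookkeeping fact that the matrix entries of a sub-wheeled PROP generate the correct subalgebra. A more categorical alternative would be to use that $\Phi$, being an equivalence, induces a bijection on subobjects, reducing matters to the statements that the monomorphisms in $\GLn$ are precisely the injective equivariant algebra maps --- tested against $\operatorname{Sym}$ of a finite-dimensional submodule of the kernel --- and that the monomorphisms in $\BBn$ are precisely the injective homomorphisms of wheeled PROPs --- using that $\BBn$ is closed under quotients together with Proposition~\ref{prop1}; but the explicit route above is somewhat cleaner to write out.
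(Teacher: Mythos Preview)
Your proof is correct, but it takes a considerably more explicit route than the paper does. The paper's entire argument is two sentences: observe that a $\GL(V)$-equivariant $K$-algebra homomorphism $\psi:R\to S$ is injective if and only if $\Phi(\psi)$ is injective (which is immediate from reductivity of $\GL(V)$, since taking invariants is exact), and then combine this with Lemma~\ref{lem2}. In other words, the paper simply uses that the equivalence $\Phi$ induces a bijection on subobjects once one knows that monomorphisms on both sides are exactly the injective maps --- precisely the ``more categorical alternative'' you sketch in your final paragraph.

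Your main argument instead constructs the bijection by hand, pairing $R'\mapsto (R'\otimes{\mathcal V})^{\GL(V)}$ with the matrix-entry subalgebra ${\mathcal B}\mapsto R[{\mathcal B}]$, and verifies mutual inverseness by rerunning the first-fundamental-theorem argument already used in the equivalence theorem and Proposition~\ref{prop1}. This is perfectly valid and has the merit of writing down explicit inverse maps, but it duplicates work the paper has already packaged into the equivalence $\Phi\simeq\Theta^{-1}$. One small caveat: your appeal to ``the $T=R$ step'' for the identity $R\bigl[(R'\otimes{\mathcal V})^{\GL(V)}\bigr]=R'$ is slightly misleading, since in the paper that step is proved via the universal property of $\rho$, which is not available for an arbitrary subalgebra $R'$; what you actually need there is the second half of the equivalence-theorem proof (that $\Theta(\Phi(R'))\cong R'$, using that every irreducible $\GL(V)$-module sits in some $\V^p_q$), and you should cite it as such.
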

\begin{proof}
It is easy that a $\GL(V)$-equivariant homomorphism $\psi:R\to S$ of
$K$-algebras is injective if and only if $\Phi(\psi)$ is injective.
So the corollary follows from Lemma~\ref{lem2}.
\end{proof}

\begin{prop}\label{prop2}
A wheeled PROP ${\mathcal A}$  is simple if and only
if $L:={\mathcal A}^0_0$ is a field, and the $L$-bilinear pairing
$$\langle\cdot,\cdot\rangle {\mathcal A}^p_q\times {\mathcal A}^q_p\to L
$$
is nondegenerate for all $p,q$.
\end{prop}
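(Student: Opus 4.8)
\textbf{Proof proposal for Proposition~\ref{prop2}.}

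The plan is to prove both directions by exploiting the bijection between ideals of ${\mathcal A}$ and $\GL(V)$-stable ideals of $R$ established in Proposition~\ref{prop1}, together with the concrete description of ${\mathcal A}$ as $(R\otimes {\mathcal V})^{\GL(V)}$ coming from the equivalence of categories. First I would fix the setup: by the equivalence $\Phi,\Theta$ we may write ${\mathcal A}=(R\otimes {\mathcal V})^{\GL(V)}$ for a $\GL(V)$-algebra $R$, and then ${\mathcal A}^0_0=R^{\GL(V)}$ since $\GL(V)$ acts trivially on ${\mathcal V}^0_0=K$ (and more precisely ${\mathcal A}^0_0=(R\otimes K[t])^{\GL(V)}$-type considerations reduce to $R^{\GL(V)}$ after noting that the loop element $t$ is just the scalar $n$ in ${\mathcal V}$). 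By Proposition~\ref{prop1}, ${\mathcal A}$ is simple iff $R$ has no proper nonzero $\GL(V)$-stable ideal, i.e.\ $\operatorname{Spec} R$ has no proper nonempty $\GL(V)$-stable closed subset — equivalently, by a standard argument in geometric invariant theory, $R$ has a dense $\GL(V)$-orbit, or at least $R^{\GL(V)}$ is a field (using that closed invariant sets correspond to radical invariant ideals and that $R^{\GL(V)} = K$ forces no nonconstant invariants; more carefully, $R^{\GL(V)}=L$ is a field precisely when the only invariant ideals are $0$ and $R$, once $R$ is assumed to be a domain or we pass to the reduced quotient).

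For the forward direction, assume ${\mathcal A}$ is simple. Then the zero ideal of ${\mathcal A}$ is maximal, hence prime, so ${\mathcal A}$ has no zero divisors with respect to $\otimes$; in particular ${\mathcal A}^0_0$ is an integral domain with no proper nonzero ideals (any ideal of the commutative ring ${\mathcal A}^0_0$ generates an ideal of ${\mathcal A}$), hence ${\mathcal A}^0_0=L$ is a field. For nondegeneracy of the pairing $\langle\cdot,\cdot\rangle:{\mathcal A}^p_q\times{\mathcal A}^q_p\to L$: the radical on the left, $\{a\in{\mathcal A}^p_q : \langle a,v\rangle=0\ \forall v\in{\mathcal A}^q_p\}$, should be shown to generate a proper ideal of ${\mathcal A}$. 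The key point is that the pairing is built from contractions — $\langle a,v\rangle=\partial\cdots\partial(a\otimes v)$ — so if $a\otimes b$ with $a$ in the radical were contracted down to something nonzero in ${\mathcal A}^0_0=L$, we could invert it, contradicting that $a$ pairs to zero with everything; more precisely, the ideal generated by a radical element $a$ meets ${\mathcal A}^0_0$ only in $0$ (every element of ${\mathcal A}^0_0$ in that ideal is a sum of full contractions of $a\otimes(\text{stuff})$, which are pairings of $a$ against elements of ${\mathcal A}^q_p$), so that ideal is proper, hence zero by simplicity, hence $a=0$. I would also need the observation that $L$ is a field is used to trivialize the $L$-module structure.

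For the converse, assume $L={\mathcal A}^0_0$ is a field and all the pairings are nondegenerate. Let ${\mathcal I}\neq 0$ be an ideal; I must show ${\mathcal I}={\mathcal A}$. Pick $0\neq a\in{\mathcal I}^p_q$. By nondegeneracy of $\langle\cdot,\cdot\rangle:{\mathcal A}^p_q\times{\mathcal A}^q_p\to L$ there is $v\in{\mathcal A}^q_p$ with $\langle a,v\rangle\neq 0$ in $L$; but $\langle a,v\rangle$ is a full contraction of $a\otimes v$, hence lies in ${\mathcal I}^0_0$, and being a nonzero element of the field $L$ it is invertible, so $1\in{\mathcal I}^0_0$, whence ${\mathcal I}={\mathcal A}$ (tensoring $1$ with any element of ${\mathcal A}$ shows ${\mathcal I}$ contains everything). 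This direction is essentially immediate once the "pairing = iterated contraction" identity is in place.

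The main obstacle I expect is the forward direction's nondegeneracy claim: carefully showing that the $L$-span of the left-radical of $\langle\cdot,\cdot\rangle_{p,q}$ generates an ideal meeting ${\mathcal A}^0_0$ only in $0$, which requires knowing that \emph{every} way of contracting $a$ (the radical element) down to ${\mathcal A}^0_0$ — possibly after tensoring with other elements of ${\mathcal A}$ and applying contractions in any pattern — can be rewritten as a pairing $\langle a, v\rangle$ for some $v\in{\mathcal A}^q_p$. This is a bookkeeping argument about how contractions compose with the tensor product (essentially: any complete contraction of $a\otimes w$ factors through first contracting $w$ with itself or with parts, producing an effective $v\in{\mathcal A}^q_p$ paired against $a$), and one must be careful that the ambient $L$-linearity and the reductivity/Reynolds-operator arguments are not secretly needed. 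A clean way to organize it is to use the realization ${\mathcal A}\subseteq R\otimes{\mathcal V}$ and compute the pairing there via the perfect pairing ${\mathcal V}^p_q\times{\mathcal V}^q_p\to K$, reducing the claim to a statement about $R\otimes{\mathcal V}$ where everything is transparent.
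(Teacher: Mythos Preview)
Your direct arguments for both directions are correct and essentially coincide with the paper's proof: for the converse you pick a nonzero $a\in{\mathcal I}^p_q$, pair it against some $v$ to get a unit in $L$, and conclude ${\mathcal I}={\mathcal A}$; for the forward direction you observe that any ideal of $L={\mathcal A}^0_0$ generates an ideal of ${\mathcal A}$ (so $L$ is a field), and that the radical of the pairing spans an ideal not containing $1$ (so it must be zero). The paper organizes the nondegeneracy step slightly more cleanly by defining ${\mathcal I}^p_q=\{u:\langle u,\cdot\rangle=0\}$ for \emph{all} $p,q$ simultaneously and asserting that $\bigoplus_{p,q}{\mathcal I}^p_q$ is itself an ideal (not merely that each radical element generates a proper ideal); the bookkeeping you flag as the ``main obstacle'' --- that every full contraction of $a\otimes(\text{stuff})$ can be rewritten as $\langle a,v\rangle$ for some $v\in{\mathcal A}^q_p$ --- is exactly the content of ``the reader may check that this defines an ideal'' in the paper, so you have identified the one nontrivial point precisely.

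What you should drop is the opening plan to invoke the equivalence of categories and Proposition~\ref{prop1}. Proposition~\ref{prop2} is stated for an arbitrary wheeled PROP ${\mathcal A}$, with no assumption of $n$-representability, so you are not entitled to write ${\mathcal A}=(R\otimes{\mathcal V})^{\GL(V)}$ at the outset; and even in the $n$-representable case, routing the argument through $\GL(V)$-stable ideals of $R$ and geometric invariant theory is heavier machinery than the situation warrants (the paper's proof is purely elementary and never mentions $R$ or $\GL(V)$). Your own second and third paragraphs already constitute a self-contained proof that does not use that setup, so simply delete the first paragraph and the final suggestion to ``use the realization ${\mathcal A}\subseteq R\otimes{\mathcal V}$''.
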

\begin{proof}
Suppose that ${\mathcal A}$ is simple. If $J$ is an ideal of
${\mathcal A}^0_0=L$, then $J{\mathcal A}\subseteq {\mathcal A}$ is an ideal.
So $J=0$ or $J=L$. Therefore, $L$ must be a field. Define ${\mathcal
I}=\bigoplus_{p,q\in \N}{\mathcal I}^p_q$ as follows. The space ${\mathcal I}^p_q$ is the
set of all $u\in {\mathcal I}^p_q$ such that $\langle u,\cdot\rangle:{\mathcal A}^q_p\to
L$ is the zero map. The reader may check that this defines an ideal.
Clearly ${\mathcal I}\neq {\mathcal A}$, so ${\mathcal I}=0$. This
shows that the bilinear pairing is nondegenerate.

Conversely, suppose that ${\mathcal A}^0_0=L$ is a field and that
$\langle\cdot,\cdot\rangle:{\mathcal A}^p_q\times {\mathcal A}^q_p\to L$ is nondegenerate.
Suppose that ${\mathcal I}$ is a nonzero ideal of ${\mathcal A}$.
Let $u\in {\mathcal I}^p_q$ be nonzero for some $p,q$. Then there exists a $v\in {\mathcal I}^q_p$
such that $\langle u,v\rangle=1\in {\mathcal I}^0_0$. So ${\mathcal
I}={\mathcal A}$. Hence ${\mathcal A}$ is simple.
\end{proof}

\begin{proof} [Proof of Theorem~\ref{Schrijver-generalization}]
Suppose that $G\subseteq \GL(V)$. Let $R=K[\GL(V)]^G$. Then
$${\mathcal A}:=\Phi(R)=(K[\GL(V)]^G\otimes {\mathcal V})^{\GL(V)}\cong
{\mathcal V}^G.
$$
Let $u\in A^p_q=(\V^p_q)^G$. Because $G$ is linearly reductive, there
exists a $v\in A^q_p=(\V^q_p)^G$ such that $\langle u,v\rangle=1$.
Also $A^0_0=K$. So ${\mathcal A}$ is simple by
Proposition~\ref{prop2}.
Conversely, suppose that ${\mathcal A}\subseteq {\mathcal V}$ is a
simple subalgebra. Then $R:=\Theta({\mathcal A})$ is a subalgebra of
$K[\GL(V)]$. The only $\GL(V)$-stable ideals of $R:=\Theta({\mathcal
A})$ are $0$ and $R$ itself. We claim that $R$ is finitely
generated. Indeed, let $J$ be the set of all $f\in R$ such that
$R_f$ is finitely generated, together with $0$. Then $J$ is a
nonzero, finitely generated $\GL(V)$-stable ideal. Therefore $J=R$,
so $1\in J$ and $R=R_1$ is finitely generated. So we may think of
$R$ as $K[X]$, the coordinate ring of some affine variety $X$. Now
$\GL(V)$ acts on $X$ regularly. Since $K[X]$ has no nontrivial
$\GL(V)$-invariant ideals, $X$ must be a single $\GL(V)$ orbit. So
$X=\GL(V)/G$ for some Zariski closed subgroup of $\GL(V)$. In order
for $\GL(V)/G$ to be affine, $G$ must be reductive by Matsushima's criterion (see~\cite{Matsushima60,Onishchik60,BB63}). Hence
$R=K[X]=K[\GL(V)]^G$.
\end{proof}
\begin{corollary}
Simple subalgebras of ${\mathcal V}$ are always finitely generated.
\end{corollary}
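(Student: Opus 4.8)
The plan is to derive finite generation of a simple subalgebra from finite generation of the associated coordinate ring, which was already obtained inside the proof of Theorem~\ref{Schrijver-generalization}. Let ${\mathcal A}\subseteq{\mathcal V}$ be a simple subalgebra. By Theorem~\ref{Schrijver-generalization} and its proof, ${\mathcal A}\cong\Phi(R)=(R\otimes{\mathcal V})^{\GL(V)}$ where $R=K[\GL(V)]^G\cong\Theta({\mathcal A})$ is a \emph{finitely generated} $K$-algebra equipped with a rational $\GL(V)$-action. So the whole statement reduces to the following claim: if $R$ is a finitely generated $\GL(V)$-algebra, then $\Phi(R)$ is finitely generated as a wheeled PROP.

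To prove this claim I would first reduce to a ``free'' model. Since the action on $R$ is rational, a finite set of algebra generators of $R$ lies in a finite-dimensional $\GL(V)$-submodule $U\subseteq R$, so there is a $\GL(V)$-equivariant surjection of $K$-algebras $\operatorname{Sym}(U)\twoheadrightarrow R$. Next, every finite-dimensional rational $\GL(V)$-module is a direct summand of a finite direct sum $M=\bigoplus_{i=1}^r{\mathcal V}^{a_i}_{b_i}$ of tensor spaces (decompose into irreducibles $S^{\mu}(V)\otimes\det^{-k}$ and observe that each of these is a summand of ${\mathcal V}^{nk}_{|\mu|}$, using $\det^{-1}=\bigwedge^n V^\star\subseteq({\mathcal V}^n_0)$); hence $M\twoheadrightarrow U$ and therefore $\operatorname{Sym}(M)\twoheadrightarrow\operatorname{Sym}(U)\twoheadrightarrow R$. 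Because $\GL(V)$ is linearly reductive, applying $(-\otimes{\mathcal V})^{\GL(V)}=\Phi$ turns these surjections into surjective homomorphisms of wheeled PROPs $\Phi(\operatorname{Sym}(M))\twoheadrightarrow\Phi(\operatorname{Sym}(U))\twoheadrightarrow\Phi(R)$. Since the image of a finitely generated wheeled PROP under a surjective homomorphism is again finitely generated, it suffices to prove that $\Phi(\operatorname{Sym}(M))$ is finitely generated.

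Now $\operatorname{Sym}(M)=K[W]$ with $W=\bigoplus_{i=1}^r{\mathcal V}^{b_i}_{a_i}$, so $\Phi(\operatorname{Sym}(M))^p_q=(K[W]\otimes{\mathcal V}^p_q)^{\GL(V)}$ is the space of $\GL(V)$-equivariant polynomial maps $W\to{\mathcal V}^p_q$. For $i=1,\dots,r$ let $\tau_i\in\Phi(\operatorname{Sym}(M))^{b_i}_{a_i}$ be the tautological element $(w_1,\dots,w_r)\mapsto w_i$. I claim $\Phi(\operatorname{Sym}(M))$ is generated as a wheeled PROP by $\tau_1,\dots,\tau_r$ together with $\downarrow$: given an equivariant polynomial map $\phi\colon W\to{\mathcal V}^p_q$, split it into homogeneous pieces, factor a piece of degree $d$ through $W^{\otimes d}\to{\mathcal V}^p_q$, and decompose $W^{\otimes d}$ into summands ${\mathcal V}^B_A$, each of which is a tensor product of $d$ of the spaces ${\mathcal V}^{b_i}_{a_i}$. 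By the first fundamental theorem of invariant theory for $\GL(V)$ --- exactly as invoked in the proofs of the equivalence $\GLn\cong\BBn$ and of Proposition~\ref{prop1} --- every $\GL(V)$-equivariant linear map ${\mathcal V}^B_A\to{\mathcal V}^p_q$ is a composition of contractions and tensorings with $\id\in V^\star\otimes V$; applying such a composition to the corresponding tensor product of the $\tau_i$ recovers the homogeneous piece. Hence $\phi$, and therefore all of $\Phi(\operatorname{Sym}(M))$, lies in the sub-wheeled PROP generated by $\tau_1,\dots,\tau_r,\downarrow$, completing the argument.

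The reduction via surjections and the linear-reductivity bookkeeping are routine; the one substantive ingredient is the last step, the first fundamental theorem for $\GL(V)$ in its ``wheeled PROP'' form, but this is precisely the classical input already used twice in Section~\ref{sec5}, so I do not expect a genuinely new obstacle here. (One could also bypass the free model and argue directly, essentially rerunning the argument in the proof of the equivalence $\GLn\cong\BBn$: pull back finitely many algebra generators of $R$ to finitely many elements $\omega_i\in{\mathcal A}^{b_i}_{a_i}$ whose pairings against ${\mathcal V}^{a_i}_{b_i}$ already span a submodule of $R$ generating it as an algebra, and then use the first fundamental theorem to show the sub-wheeled PROP generated by the $\omega_i$ and $\downarrow$ is all of ${\mathcal A}$.)
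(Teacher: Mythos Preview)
Your argument is correct. The paper states this corollary without proof, presumably regarding it as immediate from the finite generation of $R=\Theta({\mathcal A})$ established inside the proof of Theorem~\ref{Schrijver-generalization}; you have supplied exactly the missing deduction from ``$R$ finitely generated as a $K$-algebra'' to ``$\Phi(R)$ finitely generated as a wheeled PROP.''

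Both routes you describe work. Your main argument (pass to a free model $\operatorname{Sym}(M)$ with $M$ a finite sum of mixed tensor spaces, then use the first fundamental theorem to see that the tautological tensors $\tau_i$ generate $\Phi(\operatorname{Sym}(M))$) is essentially the ``generic tensors'' viewpoint of Section~\ref{sec7}. The parenthetical alternative you sketch at the end --- choose finitely many $\omega_i\in{\mathcal A}$ whose pairings span a generating submodule of $R$, then invoke the first fundamental theorem as in the proof of the equivalence $\Phi\circ\Theta\simeq\id$ --- is closer in spirit to the paper's own machinery in Section~\ref{sec5}, since that proof already shows that every element of $(R\otimes{\mathcal V})^{\GL(V)}$ is obtained from finitely many $\rho(a_i)$ by contractions and tensoring with $\downarrow$. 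Either way, the single substantive input is the first fundamental theorem for $\GL(V)$, which the paper has already invoked repeatedly, so your confidence that no new obstacle appears is justified.
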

\begin{example}
$\Phi(K)={\mathcal V}^{\GL(V)}$.
\end{example}

\section{A characterization of $n$-representable wheeled PROPs}\label{sec7}
\subsection{A theorem of Procesi}
An interesting problem in the theory of polynomial identities is to determine the necessary conditions to be able to embed an algebra $R$ into $\M_n(S)$, the ring of $n \times n$ matrices over a commutative ring $S$. While this problem does not seem to have a good answer, Procesi proved a remarkable result by considering rings with trace instead. 

\begin{definition}
A $K$-algebra with trace is an algebra $R$ with a $K$-linear map $\Tr:R \rightarrow R$ satisfying

\begin{itemize}
\item $\Tr(a) b = b \Tr(a)$;
\item $\Tr(ab) = \Tr(ba)$;
\item $\Tr(\Tr(a)b) = \Tr(a)\Tr(b)$.
\end{itemize}
\end{definition}
A typical example of a trace algebra is the matrix ring $\M_n(S)$ where $S$ is a commutative $K$-algebra.
The map $\Tr:\M_n(S)\to \M_n(S)$ is given by $\Tr(A)=\trace(A)I$
where $\trace(A)\in K$ is the trace of the matrix $A\in \M_n(S)$ and $I$ is the $n\times n$ identity matrix.
If $A\in \M_n(S)$, then it has a characteristic polynomial
$$
\chi_A(T)=T^n+f_1(A)T^{n-1}+\cdots+f_{n-1}(A)T+f_n(A).
$$
The coefficients $f_1(A),\dots,f_n(A)$ can be expressed in terms of traces.
The Cayley-Hamilton identity states that $\chi_A(A)=0$.
 For example, if $n=2$ then we have
$$
\chi_A(T)=T^2-\trace(A)T+{\textstyle\frac{1}{2}}(\trace(A)^2-\trace(A^2))\in K[T]
$$
The Cayley-Hamilton for $n=2$ states that 
$$
\chi_A(A)=A^2-\trace(A)A+{\textstyle\frac{1}{2}}(\trace(A)^2-\trace(A^2))=0\in \M_{2,2}(S).
$$

\begin{theorem} [\cite{Procesi87}]
If $R$ is a $K$-algebra with trace satisfying the $n$-th Cayley-Hamilton identity, then we have an embedding $R \hookrightarrow \M_n(S)$
for some commutative $K$-algebra $S$.
\end{theorem}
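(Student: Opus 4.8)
The plan is to recast the statement in the language of wheeled PROPs and then deduce it from the equivalence of categories of Section~\ref{sec5} together with the fundamental theorems of invariant theory for $\GL(V)$. Throughout, fix an $n$-dimensional vector space $V$ with mixed tensor algebra $\mathcal{V}$, so that $\mathcal{V}^1_1=\End(V)$ and $S\otimes\mathcal{V}^1_1=\Mat_n(S)$ for any commutative $K$-algebra $S$; under this identification $\partial^1_1\colon S\otimes\mathcal{V}^1_1\to S$ is the ordinary matrix trace, and the trace-algebra trace on $\Mat_n(S)$ is $M\mapsto(\partial^1_1 M)\otimes\downarrow=\trace(M)I$.

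To the trace algebra $R$ I would attach a wheeled PROP $\mathcal{A}(R)=\mathcal{Z}\langle\{A_r\}_{r\in R}\rangle/\mathcal{J}$, with one generator $A_r$ of biarity ${1\choose 1}$ for each $r\in R$, where $\mathcal{J}$ is generated by the relations $A_{r+s}-A_r-A_s$, $A_{\lambda r}-\lambda A_r$ ($\lambda\in K$), $A_{rs}-A_r\star A_s$ (with $\star$ the composition, i.e.\ matrix product, on biarity ${1\choose 1}$), $A_{1_R}-\downarrow$, the trace-compatibility relation $A_{\Tr(r)}-(\partial^1_1 A_r)\otimes\downarrow$, and $[\downarrow^z_z]-n$ (the loop equals the scalar $n$). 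By the universal property of free wheeled PROPs, homomorphisms $\mathcal{A}(R)\to S\otimes\mathcal{V}$ correspond bijectively to homomorphisms of trace algebras $R\to\Mat_n(S)$. Using multiplicativity and the trace relation, every biarity ${1\choose 1}$ monomial in $\mathcal{A}(R)$ reduces to a single $A_r$, so $r\mapsto A_r$ is a surjective homomorphism of trace algebras $R\twoheadrightarrow\mathcal{A}(R)^1_1$; moreover the $n$-th Cayley--Hamilton relation in each $A_r$ holds automatically in $\mathcal{A}(R)$, since the corresponding element of $R$ is $0$ by hypothesis. Hence the theorem is equivalent to the conjunction of: (i) $\mathcal{A}(R)$ is $n$-representable, and (ii) $R\to\mathcal{A}(R)^1_1$ is injective. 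Granting both, an embedding $\mathcal{A}(R)\hookrightarrow S\otimes\mathcal{V}$ restricts on biarity ${1\choose 1}$ to an embedding $R\hookrightarrow\Mat_n(S)$, automatically trace-preserving.

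Next I would attack (i) and (ii) simultaneously through the functors of Section~\ref{sec5}. Choose a generating set of $R$ as a trace algebra and let $S$ be the (not necessarily Noetherian) commutative $K$-algebra representing the functor sending a commutative $K$-algebra $B$ to the set of homomorphisms of trace algebras $R\to\Mat_n(B)$, with universal representation $\xi\colon R\to\Mat_n(S)$. The group $\GL(V)$ acts on $S$ by conjugating the target copy of $\Mat_n$; after the left/right bookkeeping of Section~\ref{sec5}, $\xi$ becomes $\GL(V)$-equivariant with image in $(S\otimes\mathcal{V})^{\GL(V)}=\Phi(S)$, and it extends to a homomorphism from $\mathcal{A}(R)$ onto the sub-wheeled PROP of $S\otimes\mathcal{V}$ generated by $\xi(R)$. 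That image is $n$-representable by definition, so (i) follows once this homomorphism is injective; and, expressing elements of $\Phi(S)$ via the first fundamental theorem for $\GL(V)$ as trace words in the entries of $\xi$, injectivity is controlled on biarity ${1\choose 1}$, where it amounts precisely to faithfulness of $\xi$, i.e.\ to (ii). Thus everything reduces to the assertion that the universal $n$-dimensional trace-representation of a Cayley--Hamilton algebra of degree $n$ is faithful.

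This last point is the genuine content, and the hard part. I would prove it with invariant theory, exactly in the spirit of the proofs of Sections~\ref{sec5}--\ref{sec6}: $\GL(V)$ is linearly reductive in characteristic $0$, so the Reynolds operator is available; the first fundamental theorem identifies $\mathcal{A}(R)^1_1$ with $\Mat_n(S)^{\GL(V)}$, the span of trace words in $\xi$, and $S$ is generated as a $K$-algebra by the coefficients of these trace words. The crucial external input is the second fundamental theorem for $n\times n$ matrices of Procesi and Razmyslov: every polynomial relation among trace words of generic $n\times n$ matrices is a consequence of the $n$-th Cayley--Hamilton identity. Since $R$ satisfies this identity, no relation among the $\xi(r)$ is new, so $\xi$ is injective; this yields (ii), hence (i), hence the theorem. (Once the characterization of $n$-representable wheeled PROPs of this section is available, one can instead read (i) off directly, $\mathcal{A}(R)$ satisfying the required relations by construction, leaving only (ii), the same faithfulness statement.) The main obstacle is therefore the control of the relations among trace words; everything else is a formal transfer of structure through the wheeled PROP dictionary.
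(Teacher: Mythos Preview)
Your setup matches the paper's: both attach to $R$ a wheeled PROP generated by symbols $\langle r\rangle$ (your $A_r$) of biarity ${1\choose1}$, modulo the multiplication, trace and $\circlearrowright=n$ relations, and both reduce Procesi's theorem to showing that (i) this wheeled PROP is $n$-representable and (ii) $R\to(\text{its }{1\choose1}\text{-part})$ is injective. The difference is in how (ii) is established, and this is where your argument diverges from the paper's and becomes somewhat circular.

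The paper does \emph{not} invoke the Procesi--Razmyslov second fundamental theorem for matrices as an external input. Instead it passes to the quotient $\mathcal{S}=\mathcal{R}/\langle\Fund_{n+1}\rangle$ and shows directly that the ideal $\mathcal{J}=\langle\Fund_{n+1}\rangle$ satisfies $\mathcal{J}^1_1=0$: using the explicit description $\mathcal{R}^n_n\cong (R\otimes_S\cdots\otimes_S R)[\Sigma_n]$, every element of $\mathcal{J}^1_1$ is obtained by contracting $\Fund_{n+1}$ against some $a_1\otimes\cdots\otimes a_n$, and this contraction \emph{is} the multilinear $n$-th Cayley--Hamilton identity evaluated at $a_1,\dots,a_n\in R$, hence zero by hypothesis. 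Then Theorem~\ref{embedding} is applied to $\mathcal{S}$ (which now satisfies both $\Fund_{n+1}$ and $\circlearrowright-n$), giving the embedding. In this way the whole argument rests only on Theorem~\ref{embedding}, which in turn packages the FFT/SFT for $\GL(V)$---not for trace rings of matrices.

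Your route instead builds the universal representation $\xi\colon R\to\Mat_n(S)$ and appeals to Procesi--Razmyslov to conclude $\xi$ is faithful. But that theorem is precisely the hard content of Procesi's result; invoking it here amounts to assuming what is to be proved. Your claim that ``injectivity is controlled on biarity ${1\choose1}$'' also needs the structural description of $\mathcal{A}(R)^p_q$ that the paper uses, and your parenthetical that $\mathcal{A}(R)$ ``satisfies the required relations by construction'' is not right as stated: $\Fund_{n+1}$ is not among your defining relations, so you must still pass to the quotient and control what is lost in degree ${1\choose1}$---exactly the step the paper isolates.
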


We prove a similar result for wheeled PROPs. Let $V$ be a vector space of dimension $n$, and denote by $\V$, the mixed tensor algebra of $V$. For any commutative algebra $R$, we can define a wheeled PROP $$R \otimes \V = \bigoplus_{p,q \in \N} R \otimes \V^p_q.$$

It is easy to check that $R \otimes \V$ satisfies the relations $\Fund_{n+1} := \sum\limits_{\sigma \in \Sigma_{n+1}} 
\sgn(\sigma)[\sigma]$ and $\circlearrowright - n$. Note here that $\circlearrowright$ denotes the exceptional loop $\partial_1^1 (\downarrow)$. Since $\ZZ$ is the initial object, we have a unique homomorphism $\ZZ \rightarrow \PR$ for any wheeled PROP $\PR$  and hence $\Fund_{n+1}$ can be considered as an element of $\PR$. It turns out that these relations are sufficient to guarantee an embedding.

\begin{theorem} \label{embedding}
If $\PR$ is a wheeled PROP satisfying the relations $\Fund_{n+1}$ and $\circlearrowright - n$, then we have an embedding $\PR \hookrightarrow R \otimes \V$ for some commutative algebra $R$.
\end{theorem}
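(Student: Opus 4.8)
The plan is to realize $\PR$ as a wheeled PROP of $\GL(V)$-invariants of a suitable commutative $\GL(V)$-algebra, and then invoke the equivalence of categories from Section~\ref{sec5}. Concretely, I would proceed as follows. Since $\ZZ$ is initial, there is a unique homomorphism $\ZZ \to \PR$, and the hypotheses say that the relations $\Fund_{n+1}$ and $\circlearrowright - n$ lie in its kernel; said otherwise, $\PR$ is a quotient of $\ZZ/{\mathcal J}$ where ${\mathcal J}$ is the ideal generated by $\Fund_{n+1}$ and $\circlearrowright - n$. The first step is therefore to understand $\ZZ/{\mathcal J}$ well enough: using the classification of ideals of $\ZZ$ (Theorem~\ref{ideals}), one identifies ${\mathcal J}={\mathcal I}(t-n,\{(1,n+1)\})$ or a closely related ideal — indeed $\circlearrowright - n$ forces $t = n$, i.e. the factor $t-n$, while $\Fund_{n+1}$ kills the sign representation $J_{(1^{n+1})}$ in degree $n+1$ (and, by compatibility, corresponds to the box $(1,n+1)$ on diagonal $n$, which is consistent with $t+d = t+n$ vanishing at $t=-n$... one must be careful with the sign convention here, but the point is ${\mathcal J}$ is exactly the ideal one expects the mixed tensor algebra of an $n$-dimensional space to satisfy).

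The second and main step is to produce a commutative $K$-algebra $R$ together with an embedding $\PR \hookrightarrow R \otimes \V$. The natural candidate for $R$ is a generic construction: pick a presentation $\ZZ\langle {\mathscr G}\rangle \twoheadrightarrow \PR$, let each generator $A \in {\mathscr G}$ of type ${p \choose q}$ correspond to a ``generic tensor'' $\xi_A \in R_0 \otimes \V^p_q$ where $R_0 = K[x_{A,I,J}]$ is a polynomial ring in the coordinates of such a tensor, and let $\widetilde\rho : \ZZ\langle {\mathscr G}\rangle \to R_0 \otimes \V$ be the induced homomorphism of wheeled PROPs (using the universal property of free wheeled PROPs). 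The relations of $\PR$ cut out an ideal; one sets $R = R_0 / I$ where $I$ is the ideal of $R_0$ generated by all the coefficients (with respect to a fixed basis of each $\V^p_q$) of $\widetilde\rho(r)$ as $r$ ranges over a generating set of $\ker(\ZZ\langle {\mathscr G}\rangle \to \PR)$. By construction $\widetilde\rho$ descends to a homomorphism $\rho : \PR \to R \otimes \V$. The crux is then to show $\rho$ is \emph{injective}.

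For injectivity I would argue via invariant theory, mirroring the proofs of Lemma~\ref{lem1} and the equivalence theorem. First, $R$ carries a natural $\GL(V)$-action (acting on the generic tensors), and $\rho$ lands in $(R \otimes \V)^{\GL(V)}$ after replacing $R$ by its image-relevant subring, so by the equivalence $\GLn \simeq \BBn$ it suffices to show the $\GL(V)$-algebra $R$ is nonzero and that the kernel of $\PR \to (R\otimes\V)^{\GL(V)}$ vanishes. The key input is that the relations $\Fund_{n+1}$ and $\circlearrowright - n$ hold in $R\otimes \V$ (because $\dim V = n$), so the map $\ZZ/{\mathcal J} \to R \otimes \V$ is defined; the claim is that an element of $\PR$ killed by $\rho$ must already be zero, which amounts to showing that the ``generic'' model is faithful. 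Here one uses that the first fundamental theorem for $\GL(V)$ — exactly as in the proof of the equivalence theorem and of Proposition~\ref{prop1} — controls all $\GL(V)$-invariant linear maps between the $\V^p_q$ as composites of contractions and tensoring with $\id$, and that $\PR$ is closed under these operations; combined with the fact that $R$ is built to be the \emph{universal} such algebra (the universal property of Lemma~\ref{lem1} applied to $\PR$, if $\PR$ is $n$-representable — but of course we are trying to prove that), one concludes $\ker\rho$ is a wheeled-PROP ideal that pairs to zero against everything and hence, using that $R \neq 0$, must vanish.

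\textbf{The main obstacle} is precisely showing $R \neq 0$ and $\rho$ injective — equivalently, that the generic construction is not accidentally trivial and that the two relations $\Fund_{n+1}, \circlearrowright - n$ are genuinely \emph{sufficient}, not just necessary. I expect the real content to be a rank/dimension count: one must show that for $\PR^p_q$, the space of relations forced by embeddability into $R\otimes\V$ is no larger than the span of $\Fund_{n+1}$ and $\circlearrowright - n$ together with the wheeled-PROP axioms — i.e. that the ideal ${\mathcal J} \subseteq \ZZ$ captures \emph{all} the identities of the mixed tensor algebra of an $n$-dimensional space. This is an FFT/SFT-type statement (the second fundamental theorem for $\GL(V)$ acting on copies of $V$ and $V^\star$): all relations among mixed tensor invariants of $\GL_n$ are consequences of the single relation saying that the antisymmetrizer on $n+1$ covariant (or contravariant) slots vanishes, which is exactly $\Fund_{n+1}$, plus the dimension relation $\circlearrowright = n$. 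Assembling this — quoting or re-deriving the SFT in the wheeled-PROP language, and checking the generic algebra $R$ has the right size — is where the bulk of the work lies; the final descent to $\PR \hookrightarrow R\otimes\V$ is then formal, via the functors $\Phi, \Theta$ of Section~\ref{sec5} exactly as in the proof that $\BBn$ is closed under homomorphic images.
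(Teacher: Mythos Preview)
Your proposal contains all the right ingredients --- generic tensors, the FFT/SFT for $\GL(V)$, and closure of $\BBn$ under homomorphic images --- and these are precisely what the paper uses. The organization, however, makes your life harder than necessary. You quotient $R_0$ by the relations of $\PR$ and then worry (rightly) about whether $R\neq 0$ and whether $\rho$ is injective; only at the very end do you invoke closure under homomorphic images. The paper reverses the order: it \emph{first} observes that $\PR$ is a homomorphic image of ${\mathcal B}:=\ZZ\langle\GG\rangle/\langle \Fund_{n+1},\ \circlearrowright-n\rangle$, and since $\BBn$ is closed under homomorphic images (the remark following Proposition~\ref{prop1}), it suffices to embed ${\mathcal B}$. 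For ${\mathcal B}$ the only relations are $\Fund_{n+1}$ and $\circlearrowright-n$, both of which already hold in $R_0\otimes\V$ because $\dim V=n$; hence no quotient of $R_0$ is needed, $R=R_0$ is a polynomial ring (trivially nonzero), and your ``main obstacle'' simply evaporates. What remains is exactly the SFT-type statement you identify: every relation among generic tensors is a consequence of $\Fund_{n+1}$ and $\circlearrowright-n$. The paper proves this by polarizing to the multilinear case, pairing with an auxiliary generic tensor to reduce to a relation among permutation tensors in $\V^p_p$, and then invoking the first and second fundamental theorems for $\GL(V)$.

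A minor aside: your detour through the classification of ideals of $\ZZ$ is not used in the paper's argument, and the box you name is off --- $\Fund_{n+1}$ lives in $J_{(1^{n+1})}$, whose removable box is $(n+1,1)$ on diagonal $-n$ (so $t+d=t-n$, consistent with $t=n$), not $(1,n+1)$.
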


\subsection{Generic tensors}
 Let $V$ be an $n$-dimensional $K$-vector space with basis $e_1,e_2,\dots,e_n$, and denote the dual basis in $V^\star$ by $e^1,e^2,\dots,e^n$. We will use the short-hand notation
$$
e^{i_1,\dots,i_p}_{j_1,\dots,j_q} = e^{i_1} \otimes \cdots \otimes e^{i_p} \otimes e_{j_1} \otimes \cdots \otimes e_{j_q} \in \V^p_q.
$$

We will study relations among generic tensors. For simplicity, we illustrate the proof by looking at an special example.
 We assume that there are just $2$ generic tensors, namely a tensor $A$ of type ${2\choose 1}$, i.e., two inputs and one output, and a tensor of type ${2\choose 0}$. We write
$$ 
A = \sum\limits_{i,j,k} a_k^{ij} e_k^{ij}
$$ and 
$$ 
B = \sum\limits_{i,j} b_{i,j} e_{i,j},
$$

where $\{a_{i,j}^k\}$ and $\{b_{i,j}\}$ are indeterminates. So, we can view $A$ as an element of $R \otimes \V^2_1$ and $B$ as an element of $R\otimes \V^0_2$, where $R$ is the polynomial ring $K[\{a^{i,j}_k\},\{b_{i,j}\}]$. Let $\mathcal{W} \subset R \otimes \V$ be the sub-wheeled PROP generated by $A$ and $B$. 

Suppose we have a relation of type ${1\choose 2}$. Such a relation must be of the form $\sum_{k = 1}^r \lambda_kD_k$, where $D_1, D_2, \dots D_r$ are decorated graphs of type ${1\choose 2}$ using only $A$, $B$, and exceptional edges ({\rm id}) and exceptional loops. Since we are working in $\mathcal{W} \subset R \otimes \V$, an exceptional loop is equal to the integer $n$. Thus we can assume that there are no exceptional loops in $D_1,\dots,D_r$.

$\W$ inherits a bigrading from the polynomial ring $K[\{a^{i,j}_k\},\{b_{i,j}\}]$, where the $a$-variables have degree $(1,0)$ and the $b$-variables have degree $(0,1)$. We only have to consider relations that are homogeneous with respect to this bigrading as well. Let us assume that $D_1,\dots,D_k$ have bidegree $(1,1)$. Hence, in each of the decorated graphs $D_i$, $A$ and $B$ occur exactly once.

Let $C = \sum_{i,j,k} c_k^{i,j} e^{i,j}_k$ be a generic tensor of type ${1\choose 2}$. Let $S = K[\{a_k^{i,j}\},\{b_{i,j}\},\{c_k^{i,j}\}]$. Then $A \in R \otimes \V^2_1 \subseteq S \otimes V_1^2$, $B \in S \otimes \V^0_2$ and $C \in S\otimes V_1^2$. We have a natural pairing $\left<\cdot,\cdot,\right>:S \otimes \V^2_1 \times S \otimes V_2^1 \rightarrow S$. We have 

$$
\sum\limits_{i=1}^r \lambda_i \left<D_i,C\right> = 0.
$$

We can write $\left<D_i,C\right> = \left<A \otimes B \otimes C,\sigma_i\right>$, where $\sigma_i \in \V^4_4$, where $\sigma_i$ is just a permutation. So, we have

$$
0 = \left<A \otimes B \otimes C, \sum_{i=1}^r \lambda_i\sigma_i\right> = \sum_{i,j,k,l,p,q,r,s} a_k^{i,j}b_{l,p}c_s^{q,r} \left< e_{k,l,p,s}^{i,j,q,r}, \sum_{t =1}^r \lambda_t \sigma_t \right>.
$$

Since the monomials are linearly independent, we get 
$$
\left< e^{i,j,q,r}_{k,l,p,s}, \sum_{t = 1}^r \lambda_t \sigma_t \right> = 0
$$ 
for all $i,j,k,l,p,q,r,s$. It follows that $\sum_{t = 1}^r \lambda_t \sigma_t = 0$. Thus the relation $\sum_{t=1}^r \lambda_t D_t$ is obtained from contracting $\sum_{t = 1}^r \lambda_t \sigma_t$ with $A \otimes B$. In other words, the relation lies in ideal of a relation that does not involve $A$ and $B$. 

The above argument works for any number of generic tensors, as long as the relation involves each generic tensor at most once. Suppose we have a relation that contains $A$ $k$ times and $B$ $l$ times. Using polarization, we get a relation in generic tensors $A_1,\dots,A_k,B_1,\dots,B_l$ exactly once, where $A_i$ are of the same type as $A$ and $B_j$ are of the same type as $B$. This multilinear relation lies in the ideal of a relation that does not involve any of the $A_i$'s. Setting $A_i = A$ and $B_i = B$ shows that the original relation also in the ideal generated by a relation that does not involve $A$ or $B$.

Permutations in $\V^p_p$ are generating invariants for the action of $\GL(V)$, and all the relations among these are a consequence of $\Fund_{n+1}$. This is just a restatement of the first and second fundamental theorems of invariant theory. Hence it follows that every relation among generic tensors is a consequence of $\circlearrowright - n$ and $\Fund_{n+1}$.

\begin{proof} [Proof of Theorem~\ref{embedding}]
Given a wheeled PROP $\mathcal{A}$ satisfying $\circlearrowright - n$ and $\Fund_{n+1}$, we want to show that it is $n$-representable, i.e., $\mathcal{A} \in {\rm Obj}(\BBn)$.

Write $\mathcal{A}$ in terms of generators and relations, i.e., $\mathcal{A} = \ZZ\langle A_i \ |\  i \in I\rangle/\mathcal{I}$, where $A_i \in \mathcal{A}^{p_i}_{q_i}$. Since, the ideal $\mathcal{I}$ contains $\circlearrowright - n$ and $\Fund_{n+1}$, we see that $\mathcal{A}$ is a homomorphic image of $\mathcal{B}:= \ZZ\langle A_i \ |\  i \in I\rangle/ \left<\circlearrowright - n, \Fund_{n+1} \right>$. Since the category $\BBn$ is closed under homomorphic images, it suffices to show that $\mathcal{B}$ is in $\BBn$.

We want to get an injective map $\mathcal{B} \rightarrow R \otimes \mathcal{V}$. We take $R = K[\{a_{s_1,\dots,s_{q_i}}^{r_1,\dots,r_{p_i}}\} |\  i \in I]$, and consider $R \otimes \V$. Then we can map each $A_i \rightarrow \sum a_{s_1,\dots,s_{q_i}}^{r_1,\dots,r_{p_i}} e_{s_1,\dots,s_{q_i}}^{r_1,\dots,r_{p_i}}$ to a generic element in $R \otimes \V$. This map is well defined because both the relations $\circlearrowright-n$ and $\Fund_{n+1}$ hold in $R \otimes \V$. This map is injective by the above discussion.
\end{proof}

\noindent We will sketch how Theorem~\ref{embedding} implies Procesi's Theorem. 
\begin{proof}Suppose $(R,\Tr)$ is a trace algebra over a field $K$ of characteristic 0 satisfying the $n$-th Cayley-Hamilton identity.
Let $S$ be the subalgebra of $R$ generated by all $\Tr(a), a\in R$.
To every element $a\in S$ we introduce a generator 
$\langle a\rangle$ of type ${1\choose 1}$. Let $\GG$ be the set of all generators. In $\ZZ\langle \GG\rangle$, let ${\mathcal I}$
be the ideal generated by 
\begin{enumerate}
\item $\circlearrowright-n$;
\item  $\langle ab\rangle-\langle a\rangle\langle b\rangle$ with $a,b\in R$;
\item $\langle \Tr(a)\rangle-\partial^1_1(\langle a\rangle)\downarrow$ with $a\in R$.
\end{enumerate}
We set ${\mathcal R}=\ZZ\langle \GG\rangle/{\mathcal I}$.
Using these relations, one can show the following linear isomorphism
$$
{\mathcal R}^n_n\cong \underbrace{R\otimes_S R\otimes_S\cdots\otimes_S R}_n[\Sigma_n].
$$
Let ${\mathcal J}\subseteq {\mathcal R}$ be the ideal generated by $\Fund_{n+1}$.
Elements in $ {\mathcal J}^1_1$ are contractions of $\Fund_{n+1}\in {\mathcal R}^{n+1}_{n+1}$ with elements $a_1\otimes a_2\otimes \cdots \otimes a_n\in {\mathcal R}^n_n$. 
Such a contraction is exactly the $n$-th multi-linear Cayley-Hamilton identity for $a_1,a_2,\dots,a_n\in R$ and is therefore equal to $0$. This shows that ${\mathcal J}^1_1=0$.
Let ${\mathcal S}={\mathcal R}/{\mathcal J}$. Since ${\mathcal S}$ satisfies the relations $\Fund_{n+1}$ and $\circlearrowright-n$, we can view ${\mathcal S}$ as a sub-wheeled PROP of $T\otimes {\mathcal V}$
where $T$ is some commutative $K$-algebra and $V$ is a vector space of dimension $n$. In particular, $R$ is a subalgebra of ${\mathcal R}^1_1={\mathcal S}^1_1\subseteq T\otimes {\mathcal V}^1_1=T\otimes \End(V)$.
In other words, $R$ is a subalgebra of $\Mat_n(T)$.
\end{proof}

\subsection{Schrijver's characterization of traces of tensor representations of diagrams}
In \cite{Schrijver15}, Schrijver introduces $T$-diagrams and characterizes which functions of $T$-diagrams are traces. We will translate the main results in \cite{Schrijver15}
in terms of wheeled PROPs and reprove them in this context. 

In this subsection we assume that $K$ is algebraically closed of characteristic $0$. Let $\MM^p_q$ be the set of all monomials of type ${p\choose q}$. 
Recall that $\MM^p_q$ is a $K$-basis of $\ZZ\langle \GG\rangle^p_q$. If $A\in \MM^p_q$ and $B\in \MM^q_p$ then we can contract the outputs of $A$ with the inputs of $B$
and the inputs of $A$ with the outputs of $B$ to get a diagram $\langle A,B\rangle\in \MM^0_0$. This extends to a bilinear pairing $\ZZ\langle\GG\rangle^p_q\times \ZZ\langle \GG\rangle^q_p\to \ZZ\langle \GG\rangle^0_0$.
A function $f:\MM^0_0\to K$ is called {\em multiplicative} if $f(1)=1$ and $f(A\cdot B)=f(A)f(B)$ for all monomials $A,B\in \MM^0_0$. Here the product $A\cdot B$ is just  the disjoint union of diagrams. Note that such a function extends uniquely to
an algebra homomorphism $f:\ZZ\langle \GG\rangle^0_0\to K$.
If $A\in \MM^p_q$ (or more generally $A\in \ZZ\langle \GG\rangle^p_q$)
 then we say that $f$ annihilates $A$ if and only if $f(\langle A,B\rangle)=0$ for all $B\in \MM^q_p$ (or equivalently, for all $B\in \ZZ\langle \GG\rangle^q_p$).
Suppose that $V$ is an $n$-dimensional $K$-vector space as usual. A tensor representation of $\GG$ of dimension $n$ is a map $\rho:\GG\to \V$ such that $\rho(A)\in \V^p_q$ for all $A\in \GG$ of type ${p\choose q}$.
If $\rho$ is such a tensor representation, then it extends uniquely to a morphism of wheeled PROPs ${\rho}:\ZZ\langle \GG\rangle\to\V$ and ${\rho}^0_0:\ZZ\langle\GG\rangle^0_0\to \V^0_0=K$ restricts
to a multiplicative map $\overline{\rho}:\MM^0_0\to K$.
The following result is Schrijver's theorem in \cite{Schrijver15}.
\begin{theorem}
Suppose that $f:\MM^0_0\to K$. Then there exists a tensor representation $\rho:\GG\to K$ of dimension $\leq d$ such that $\overline{\rho}=f$ if and only if
$f$ is multiplicative and annihilates $\Fund_{d+1}$.
\end{theorem}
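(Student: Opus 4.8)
The plan is to deduce both implications from results already established, using the contraction behaviour of $\Fund_m$ on $\ZZ$ to force the dimension bound. For the ``only if'' direction, suppose $\rho$ is a tensor representation of dimension $n\le d$; extend it to a homomorphism of wheeled PROPs $\rho\colon\ZZ\langle\GG\rangle\to\V$, with $\V$ the mixed tensor algebra of an $n$-dimensional space $V$. Multiplicativity of $\overline\rho$ is immediate, since $\overline\rho(A\cdot B)=\rho(A\otimes B)=\rho(A)\otimes\rho(B)$ in $\V^0_0=K$. As an operator on $V^{\otimes(d+1)}$, $\rho(\Fund_{d+1})=\sum_{\sigma\in\Sigma_{d+1}}\sgn(\sigma)\sigma$ is $(d+1)!$ times the projection onto $\Lambda^{d+1}V$, which is $0$ because $\dim V\le d$. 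Since $\langle\cdot,\cdot\rangle$ is built from $\otimes$ and the contractions $\partial^i_j$, the morphism $\rho$ intertwines it with the pairing on $\V$, so $\overline\rho(\langle\Fund_{d+1},B\rangle)=\langle\rho(\Fund_{d+1}),\rho(B)\rangle=0$ for all $B$; thus $\overline\rho$ annihilates $\Fund_{d+1}$.

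For the ``if'' direction, assume $f$ is multiplicative and annihilates $\Fund_{d+1}$. First I would pin down the dimension. Pairing $\Fund_{d+1}$ with the identity permutation $[\id]\in\MM^{d+1}_{d+1}$ closes each summand $[\sigma]$ into $c(\sigma)$ loops, where $c(\sigma)$ is the number of cycles of $\sigma$, so $\langle\Fund_{d+1},[\id]\rangle=\sum_{\sigma\in\Sigma_{d+1}}\sgn(\sigma)\,\circlearrowright^{\,c(\sigma)}$. Writing $n:=f(\circlearrowright)\in K$ and using multiplicativity,
$$
0=f\bigl(\langle\Fund_{d+1},[\id]\rangle\bigr)=\sum_{\sigma\in\Sigma_{d+1}}\sgn(\sigma)\,n^{\,c(\sigma)}=n(n-1)\cdots(n-d),
$$
the last equality being the standard identity $\sum_{\sigma\in\Sigma_{d+1}}\sgn(\sigma)x^{\,c(\sigma)}=x(x-1)\cdots(x-d)$ (both sides compute $\Tr_{V^{\otimes(d+1)}}(\Fund_{d+1})$ when $x=\dim V$). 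Since $K$ is a field of characteristic $0$, one of the distinct scalars $n,n-1,\dots,n-d$ vanishes, so $n\in\{0,1,\dots,d\}$. Next, set $\mathcal D=\bigoplus_{p,q}\mathcal D^p_q\subseteq\ZZ\langle\GG\rangle$, where $\mathcal D^p_q$ is the radical of the bilinear form $(A,B)\mapsto f(\langle A,B\rangle)$ on $\ZZ\langle\GG\rangle^p_q\times\ZZ\langle\GG\rangle^q_p$. A routine diagram chase shows $\mathcal D$ is an ideal: absorbing $C$ into $B$ rewrites $\langle A\otimes C,B\rangle$ as $\langle A,B'\rangle$, and inserting an identity strand rewrites $\langle\partial^i_j A,B\rangle$ as $\langle A,B''\rangle$. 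By hypothesis $\Fund_{d+1}\in\mathcal D^{d+1}_{d+1}$; also $\circlearrowright-n\in\mathcal D^0_0$ since $f\bigl((\circlearrowright-n)\,y\bigr)=(f(\circlearrowright)-n)f(y)=0$; and $\mathcal D^0_0=\ker f$ by multiplicativity. Hence $\mathcal Q:=\ZZ\langle\GG\rangle/\mathcal D$ is a wheeled PROP with $\mathcal Q^0_0=K$ satisfying both $\circlearrowright-n$ and $\Fund_{d+1}$.

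The next step is to upgrade $\Fund_{d+1}=0$ to $\Fund_{n+1}=0$ in $\mathcal Q$. The contraction identity $\partial^m_m(\Fund_m)=(t-m+1)\,\Fund_{m-1}$ holds in $\ZZ$ (apply the Young-symmetrizer computation of Section~\ref{sec4} to the column tableau of shape $(1^m)$, whose box labelled $m$ sits in row $m$, column $1$), hence also in any wheeled PROP with $t$ replaced by $\circlearrowright$. Iterating $d-n$ times in $\mathcal Q$, where $\circlearrowright=n$,
$$
\partial^{n+2}_{n+2}\circ\cdots\circ\partial^{d+1}_{d+1}(\Fund_{d+1})=\bigl((n-d)(n-d+1)\cdots(-1)\bigr)\,\Fund_{n+1}=(-1)^{d-n}(d-n)!\,\Fund_{n+1},
$$
so $\Fund_{n+1}=0$ in $\mathcal Q$ because $(d-n)!\ne0$ in $K$. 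Now Theorem~\ref{embedding} applies and yields an embedding $\mathcal Q\hookrightarrow R\otimes\V$ with $\dim V=n$, for some commutative $K$-algebra $R$ (in fact a polynomial ring, so $R$ admits a $K$-algebra homomorphism $\phi\colon R\to K$). Composing,
$$
\rho\colon\ZZ\langle\GG\rangle\twoheadrightarrow\mathcal Q\hookrightarrow R\otimes\V\xrightarrow{\ \phi\otimes\id\ }\V
$$
is a homomorphism of wheeled PROPs whose restriction to $\GG$ is a tensor representation of dimension $n\le d$, and on $\MM^0_0$ it factors as $\MM^0_0\xrightarrow{\,f\,}\mathcal Q^0_0=K\hookrightarrow R\xrightarrow{\,\phi\,}K$, which equals $f$ because $\phi$ restricts to the identity on the scalars. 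Thus $\overline\rho=f$.

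The crux I expect is the passage from $\Fund_{d+1}=0$ to $\Fund_{n+1}=0$ in $\mathcal Q$: by itself the relation $\Fund_{d+1}$ only bounds the dimension by $d$, and it is the interplay with the precise value $\circlearrowright=n$ --- forced by testing $\Fund_{d+1}$ against $[\id]$ --- together with the exact constant in $\partial^m_m(\Fund_m)=(t-m+1)\,\Fund_{m-1}$ that collapses the two. Equivalently, the classification of ideals of $\ZZ$ shows $(\Fund_{n+1})\subseteq(\Fund_{d+1})+(\circlearrowright-n)$ inside $\ZZ$.
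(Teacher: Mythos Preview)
Your argument is correct and follows the same route as the paper's: extract $n=f(\circlearrowright)\in\{0,\dots,d\}$ by pairing $\Fund_{d+1}$ with the identity permutation, pass to a quotient wheeled PROP in which both $\circlearrowright-n$ and $\Fund_{n+1}$ vanish (you quotient by the radical $\mathcal D$ of the $f$-pairing, the paper by the ideal generated by $\ker f$ together with $\Fund_{d+1}$; your explicit contraction identity $\partial^m_m(\Fund_m)=(t-m+1)\Fund_{m-1}$ is precisely the column-tableau case of the Young-symmetrizer lemma in Section~\ref{sec4} that the paper invokes), then embed via Theorem~\ref{embedding} and evaluate at a $K$-point. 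One small slip: the ring $R$ that Theorem~\ref{embedding} produces for your $\mathcal Q$ is only a \emph{quotient} of the polynomial ring constructed in that proof, not itself a polynomial ring; the paper handles the analogous step by choosing a maximal ideal of $R$ and invoking the Nullstellensatz (using that $K$ is algebraically closed), and the same fix works for you since $\mathcal Q\neq 0$ forces $R\neq 0$.
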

Let ${\mathcal I}\subseteq \ZZ\langle \GG\rangle$ be the ideal generated by $\Fund_{d+1}$.
We can reformulate this theorem using more of the wheeled PROP terminology:
\begin{theorem}
Suppose that $f:\ZZ\langle \GG\rangle^0_0\to K$ is an algebra homomorphism. Then there exists a  homomorphism
of wheeled PROPs $\rho:\ZZ\langle \GG\rangle\to {\mathcal V}$ for some vector space $V$ of dimension $n\leq d$
with $\rho^0_0=f$
if and only if $f({\mathcal I}^0_0)=0$. 
\end{theorem}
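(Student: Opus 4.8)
The plan is to prove the two implications separately. The ``only if'' direction is essentially formal: if $\rho:\ZZ\langle\GG\rangle\to\V$ is a homomorphism of wheeled PROPs with $\rho^0_0=f$ and $n=\dim V\le d$, then on the canonical copy of $\ZZ$ inside $\ZZ\langle\GG\rangle$ the map $\rho$ sends each permutation diagram $[\sigma]\in\ZZ^{d+1}_{d+1}$ to the operator $\sigma$ on $V^{\otimes(d+1)}$, so $\rho(\Fund_{d+1})=\sum_{\sigma\in\Sigma_{d+1}}\sgn(\sigma)\,\sigma$ is a nonzero multiple of the antisymmetrizer on $V^{\otimes(d+1)}$, which vanishes because $\dim V\le d$. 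Since $\rho$ is a homomorphism and $\mathcal I=\langle\Fund_{d+1}\rangle$, this forces $\rho(\mathcal I)=0$, whence $f(\mathcal I^0_0)=\rho^0_0(\mathcal I^0_0)=0$. The content is in the ``if'' direction, which I would reduce to Theorem~\ref{embedding} together with the first fundamental theorem of invariant theory for $\GL(V)$ and the surjectivity of reductive categorical quotients on $K$-points.

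So assume $f(\mathcal I^0_0)=0$. First I would pin down the dimension. Contracting each output of $\Fund_{d+1}\in\ZZ^{d+1}_{d+1}$ with the corresponding input produces $\sum_{\sigma\in\Sigma_{d+1}}\sgn(\sigma)\,t^{c(\sigma)}=t(t-1)\cdots(t-d)\in\mathcal I^0_0$, where $c(\sigma)$ is the number of cycles of $\sigma$ and the closed form is the standard signed cycle-type identity. Applying $f$ yields $\prod_{k=0}^{d}(f(t)-k)=0$, so $n:=f(t)$ is an integer with $0\le n\le d$; fix an $n$-dimensional $V$. Next, iterating a contraction identity of the form $\partial^m_m(\Fund_m)=(t-m+1)\Fund_{m-1}$ --- a direct computation of the same type as the Young-symmetrizer computations in Section~\ref{sec4}, applied to the column tableau --- gives $P(t)\,\Fund_{n+1}\in\mathcal I$ with $P(t)=\prod_{k=n+1}^{d}(t-k)$. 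Since $f$ is an algebra homomorphism with $f(t)=n$ we get $f(P(t))=\prod_{k=n+1}^{d}(n-k)=(-1)^{d-n}(d-n)!\ne0$, so from $f\bigl(P(t)\,\langle\Fund_{n+1},C\rangle\bigr)=0$ for every monomial $C$ we conclude that $f$ annihilates $\langle\Fund_{n+1}\rangle^0_0$. As also $f(\circlearrowright-n)=0$, the homomorphism $f$ descends to an algebra homomorphism $\bar f:\mathcal Q^0_0\to K$, where $\mathcal Q:=\ZZ\langle\GG\rangle/\langle\Fund_{n+1},\circlearrowright-n\rangle$.

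Now I would invoke Theorem~\ref{embedding} and, more precisely, its proof: $\mathcal Q$ is isomorphic to the sub-wheeled PROP $\W\subseteq R\otimes\V$ generated by generic tensors of the prescribed types, where $R=K[\{a^{\vec r}_{\vec s}\}]$ is the polynomial ring on all tensor coordinates and $\dim V=n$. Equipping $R$ with the $\GL(V)$-action for which the generic tensors are invariant, the subalgebra $\mathcal Q^0_0\cong\W^0_0\subseteq R$ is generated by the closed diagrams in the generic tensors, and by the first fundamental theorem for $\GL(V)$ this subalgebra is exactly $R^{\GL(V)}$. Thus $\bar f$ is a $K$-point of $\operatorname{Spec}(R^{\GL(V)})$, the categorical quotient of the affine space $\operatorname{Spec}(R)$ by the reductive group $\GL(V)$; since $K$ is algebraically closed, the quotient morphism is surjective on $K$-points, so $\bar f$ lifts to a $K$-algebra homomorphism $\phi:R\to K$ with $\phi|_{R^{\GL(V)}}=\bar f$. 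Then $\phi\otimes\id:R\otimes\V\to\V$ is a homomorphism of wheeled PROPs; restricting it to $\W\cong\mathcal Q$ and precomposing with the quotient $\ZZ\langle\GG\rangle\twoheadrightarrow\mathcal Q$ yields the desired $\rho:\ZZ\langle\GG\rangle\to\V$, which by construction satisfies $\rho^0_0=f$ and has $\dim V=n\le d$.

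The step I expect to be the main obstacle is the identification $\W^0_0=R^{\GL(V)}$ and its use: one must match ``closed diagrams in generic tensors'' with ``$\GL(V)$-invariant polynomial functions'' (this is precisely where the first fundamental theorem enters, with the relations $\Fund_{n+1}$ playing the role of the second fundamental theorem inside Theorem~\ref{embedding}) and then invoke that over an algebraically closed field the categorical quotient by a reductive group is surjective on closed points. The remaining delicate points are purely combinatorial: that $\mathcal I^0_0$ is spanned by the complete contractions $\langle\Fund_{d+1},C\rangle$, and that annihilation of $\Fund_{d+1}$ together with $f(t)=n$ already forces annihilation of $\Fund_{n+1}$.
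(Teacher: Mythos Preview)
Your argument is correct, but it takes a genuinely different route from the paper in the ``if'' direction. Both you and the paper begin the same way: compute the full contraction of $\Fund_{d+1}$ to get $t(t-1)\cdots(t-d)\in\mathcal I^0_0$, deduce $f(t)=n$ for some $0\le n\le d$, and then observe (via the contraction identity you wrote, which is the column-tableau case of the lemma preceding Proposition~\ref{div2}) that $\Fund_{n+1}$ lies in the ideal generated by $\Fund_{d+1}$ and $t-n$. From there the two proofs diverge. The paper simply enlarges the ideal: it lets $J=\ker f$, sets $\mathcal J=J\cdot\ZZ\langle\GG\rangle$, and applies Theorem~\ref{embedding} to $\ZZ\langle\GG\rangle/(\mathcal I+\mathcal J)$; because $(\mathcal I+\mathcal J)^0_0=J$ is already maximal, the quotient has $(\cdot)^0_0\cong K$, so after embedding into $R\otimes\V$ one may choose \emph{any} maximal ideal $\mathfrak m\subset R$ and invoke the Nullstellensatz to get $R/\mathfrak m\cong K$; the resulting $\rho^0_0$ automatically factors through $\ZZ\langle\GG\rangle^0_0/J\cong K$ and hence equals $f$. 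No invariant theory beyond what is already inside Theorem~\ref{embedding} is needed.

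Your approach keeps the quotient $\mathcal Q$ as large as possible, then does extra work to locate the correct point of $R$: you identify $\mathcal Q^0_0$ with $R^{\GL(V)}$ via the first fundamental theorem and lift $\bar f$ using surjectivity of the categorical quotient $\operatorname{Spec}R\to\operatorname{Spec}R^{\GL(V)}$ on $K$-points. This is more conceptual --- it exhibits $f$ as a point of the moduli space of tensor representations and $\rho$ as a point in the fiber --- but it costs you the FFT identification $\W^0_0=R^{\GL(V)}$ and the GIT surjectivity statement, both of which the paper avoids by killing $\ker f$ up front. Either way the ``cutting'' step you flagged (that $\langle\Fund_{n+1}\rangle^0_0$ is spanned by the $\langle\Fund_{n+1},C\rangle$) is routine: any closed diagram containing $\Fund_{n+1}$ is obtained by deleting the $\Fund_{n+1}$-box and calling the remainder $C$.
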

\begin{proof}
Suppose that $f=\rho^0_0$ where $\rho:\ZZ\langle\GG\rangle\to\V$ is a homomorphism of wheeled PROPs and $V$ is an $n$-dimensional vector space.
Clearly,  t $f=\rho^0_0:\ZZ\langle\GG\rangle^0_0\to K$ is an algebra homomorphism. The relation $\Fund_{n+1}$ lies in the kernel of $\rho$
and therefore, ${\mathcal I}^0_0$ is contained in the kernel of $f=\rho^0_0$.

Conversely, suppose that $f:\ZZ\langle \GG\rangle^0_0\to K$ is an algebra homomorphism and $f({\mathcal I}^0_0)=0$.
Let $t=\circlearrowright$.
We have $0=f(\langle \Fund_{d+1},\downarrow\downarrow\cdots\downarrow\rangle)=f(t(t-1)(t-2)\cdots (t-d))=f(t)f(t-1)\cdots f(t-d)$.
It follows that $f(t-n)=0$ for some $n\in \{0,1,2,\dots,d\}$.
 Let $J\subseteq \ZZ\langle \GG\rangle^0_0$ be the kernel of $f$ and let ${\mathcal J}=J\ZZ\langle \GG\rangle$ be the ideal
in $\ZZ\langle \GG\rangle$ generated by $J$. 
 Since $\Alt_{n+1}$ lies in the ideal generated by $t-n$ and $\Alt_{d+1}$, we get that $\Alt_{n+1}$ lies in ${\mathcal J}$.
Note that ${\mathcal I}^0_0\subseteq {\mathcal J}^0_0$, because $f({\mathcal I}^0_0)=0$.
Since the ideal ${\mathcal I}+{\mathcal J}$ contains $\Fund_{n+1}$ and $t-n$, there exists a finitely generated commutative $K$-algebra $R$ 
and an injective homomorphism of wheeled PROPs
$$
\psi:\ZZ\langle \GG\rangle/({\mathcal I}+{\mathcal J})\to R\otimes{\mathcal V}
$$
where $V$ is an $n$-dimensional vector space. We can choose an arbitrary maximal ideal ${\mathfrak m}$ of $R$.
Because $R$ is finitely generated, and $K$ is algebraically closed, we have $R/{\mathfrak m}\cong K$ by Hilbert's Nullstellensatz.
Now consider the composition
$$
\rho:\xymatrix{\ZZ\langle \GG\rangle\ar@{->>}[r] & \ZZ\langle \GG\rangle/({\mathcal I}+{\mathcal J})\ar@{^(->}[r]^-\psi   &R\otimes \V\ar@{->>}[r]  & R/{\mathfrak m}\otimes \V\cong \V}
$$
By construction $\rho^0_0:\ZZ\langle \GG\rangle^0_0\to {\mathcal V}^0_0=K$ is nonzero and contains ${\mathcal I}^0_0$ in its kernel.
It follows that $\rho^0_0=f$.

\end{proof}

\begin{bibdiv}
\begin{biblist}

\bib{BB63}{article}{
   author={Bia\l ynicki-Birula, A.},
   title={On homogeneous affine spaces of linear algebraic groups},
   journal={Amer. J. Math.},
   volume={85},
   date={1963},
   pages={577--582},
}

\bib{Cvitanovic08}{book}{
   author={Cvitanovi\'{c}, Predrag},
   title={Group Theory},
   note={Birdtracks, Lie's, and exceptional groups},
   publisher={Princeton University Press, Princeton, NJ},
   date={2008},
   pages={xiv+273},
}

\bib{DK}{book} {
    AUTHOR = {Derksen, Harm},
    author = {Kemper, Gregor},
     TITLE = {Computational invariant theory},
    SERIES = {Encyclopaedia of Mathematical Sciences},
    VOLUME = {130},
   EDITION = {enlarged},
      NOTE = {With two appendices by Vladimir L. Popov, and an addendum by
              Norbert A'Campo and Popov,
              Invariant Theory and Algebraic Transformation Groups, VIII},
 PUBLISHER = {Springer, Heidelberg},
      YEAR = {2015},
     PAGES = {xxii+366},
      ISBN = {978-3-662-48420-3; 978-3-662-48422-7},
   MRCLASS = {13A50 (12Y05 13P10 14L24 15A72 68W30)},
  MRNUMBER = {3445218},
MRREVIEWER = {Thomas Garrity},
       DOI = {10.1007/978-3-662-48422-7},
       URL = {https://doi.org/10.1007/978-3-662-48422-7},
}

\bib{Einstein16}{article}{
   author={Einstein, Albert},
   title={Die Grundlage der allgemeinen Relativit\"atstheorie},
   journal={Annalen der Physik},
   volume={49},
   date={1916},
   number={4},
   pages={769--822},
   }

\bib{FHbook}{book} {
    author = {Fulton, William},
    author = {Harris, Joe}
     title = {Representation theory},
    series = {Graduate Texts in Mathematics},
    volume = {129},
      note = {A first course,
              Readings in Mathematics},
publisher = {Springer-Verlag, New York},
      date = {1991},
     pages = {xvi+551},
}

\bib{Humphreys}{book} {
    AUTHOR = {Humphreys, James E.},
     TITLE = {Linear algebraic groups},
      NOTE = {Graduate Texts in Mathematics, No. 21},
 PUBLISHER = {Springer-Verlag, New York-Heidelberg},
      YEAR = {1975},
     PAGES = {xiv+247},
   MRCLASS = {20GXX (14LXX)},
  MRNUMBER = {0396773},
MRREVIEWER = {T. Ono},
}

\bib{Landsberg12}{book}{
   author={Landsberg, J. M.},
   title={Tensors: geometry and applications},
   series={Graduate Studies in Mathematics},
   volume={128},
   publisher={American Mathematical Society, Providence, RI},
   date={2012},
}

\bib{Macdonald}{book} {
    AUTHOR = {Macdonald, I. G.},
     TITLE = {Symmetric functions and {H}all polynomials},
    SERIES = {Oxford Mathematical Monographs},
   EDITION = {Second},
      NOTE = {With contributions by A. Zelevinsky,
              Oxford Science Publications},
 PUBLISHER = {The Clarendon Press, Oxford University Press, New York},
      YEAR = {1995},
     PAGES = {x+475},
      ISBN = {0-19-853489-2},
   MRCLASS = {05E05 (05-02 20C30 20C33 20K01 33C80 33D80)},
  MRNUMBER = {1354144},
MRREVIEWER = {John R. Stembridge},
}

\bib{MacLane65}{article}{
   author={MacLane, Saunders},
   title={Categorical algebra},
   journal={Bull. Amer. Math. Soc.},
   volume={71},
   date={1965},
   pages={40--106},
}

\bib{Matsushima60}{article}{
   author={Matsushima, Yoz\^{o}},
   title={Espaces homog\`enes de Stein des groupes de Lie complexes},
   language={French},
   journal={Nagoya Math. J},
   volume={16},
   date={1960},
   pages={205--218},
}

\bib{MMS09}{article}{
   author={Markl, M.},
   author={Merkulov, S.},
   author={Shadrin, S.},
   title={Wheeled PROPs, graph complexes and the master equation},
   journal={J. Pure Appl. Algebra},
   volume={213},
   date={2009},
   number={4},
   pages={496--535},
}

\bib{Onishchik60}{article}{
   author={Oni\v{s}\v{c}ik, A. L.},
   title={Complex hulls of compact homogeneous spaces},
   journal={Soviet Math. Dokl.},
   volume={1},
   date={1960},
   pages={88--91},
   issn={0197-6788},
   review={\MR{0120311}},
}

\bib{Penrose56}{thesis}{
    author={Penrose, Roger},
    title={Tensor Methods in Algebraic Geometry},
    institution={Cambridge University},
    type={Ph.D. Thesis},
    date={1956}
    }

\bib{Penrose68}{article}{
   author={Penrose, Roger},
   title={Structure of space-time},
   book={
   title={Battelle rencontres. 1967 lectures in mathematics and physics},
   series={Edited by Cecile M. DeWitt and John A. Wheeler},
   publisher={W. A. Benjamin, Inc., New York-Amsterdam},
   date={1968},
  },
}

\bib{Penrose71}{article}{
   author={Penrose, Roger},
   title={Applications of negative dimensional tensors},
   conference={
      title={Combinatorial Mathematics and its Applications (Proc. Conf.,
      Oxford, 1969)},
   },
   book={
      publisher={Academic Press, London},
   },
   date={1971},
   pages={221--244},
}

\bib{PV}{book}{
    AUTHOR = {Popov, V. L.},
    author = {Vinberg, \`E. B.}
     TITLE = {Invariant theory},
 BOOKTITLE = {Algebraic geometry, 4 ({R}ussian)},
    SERIES = {Itogi Nauki i Tekhniki},
     PAGES = {137--314, 315},
 PUBLISHER = {Akad. Nauk SSSR, Vsesoyuz. Inst. Nauchn. i Tekhn. Inform.,
              Moscow},
      YEAR = {1989},
   MRCLASS = {14D25 (13A50 14L30 15A72)},
  MRNUMBER = {1100485},
MRREVIEWER = {P. E. Newstead},
}

\bib{Procesi87}{article}{
   author={Procesi, Claudio},
   title={A formal inverse to the Cayley-Hamilton theorem},
   journal={J. Algebra},
   volume={107},
   date={1987},
   number={1},
   pages={63--74},
}

\bib{Sagan}{book} {
    author = {Sagan, Bruce E.},
     TITLE = {The symmetric group},
    SERIES = {Graduate Texts in Mathematics},
    VOLUME = {203},
   EDITION = {Second},
      NOTE = {Representations, combinatorial algorithms, and symmetric
              functions},
 PUBLISHER = {Springer-Verlag, New York},
      YEAR = {2001},
     PAGES = {xvi+238},
      ISBN = {0-387-95067-2},
   MRCLASS = {05E10 (05E05 20C30)},
  MRNUMBER = {1824028},
       DOI = {10.1007/978-1-4757-6804-6},
       URL = {https://doi.org/10.1007/978-1-4757-6804-6},
}

\bib{Schrijver08}{article}{
   author={Schrijver, Alexander},
   title={Tensor subalgebras and first fundamental theorems in invariant
   theory},
   journal={J. Algebra},
   volume={319},
   date={2008},
   number={3},
   pages={1305--1319},
}

\bib{Schrijver15}{article}{
   author={Schrijver, Alexander},
   title={On traces of tensor representations of diagrams},
   journal={Linear Algebra Appl.},
   volume={476},
   date={2015},
   pages={28--41},
}

\bib{Springer}{book} {
    AUTHOR = {Springer, T. A.},
     TITLE = {Linear algebraic groups},
    SERIES = {Progress in Mathematics},
    VOLUME = {9},
   EDITION = {Second},
 PUBLISHER = {Birkh\"{a}user Boston, Inc., Boston, MA},
      YEAR = {1998},
     PAGES = {xiv+334},
      ISBN = {0-8176-4021-5},
   MRCLASS = {20G15 (14L10)},
  MRNUMBER = {1642713},
MRREVIEWER = {Andy R. Magid},
       DOI = {10.1007/978-0-8176-4840-4},
       URL = {https://doi.org/10.1007/978-0-8176-4840-4},
}


\end{biblist}
\end{bibdiv}


\end{document}